\numberwithin{equation}{section}
\definecolor{my-black}{rgb}{0,0,0}
\definecolor{my-blue}{rgb}{0,0,0.8}
\definecolor{my-red}{rgb}{0.8,0,0} 
\definecolor{my-green}{rgb}{0,0.5,0}
\theoremstyle{plain} %italic
\newtheorem{lemma}{Lemma}[section]
\newtheorem{theorem}{Theorem}[section]
\newtheorem{corollary}[theorem]{Corollary}
\newtheorem{proposition}{Proposition}[section]
\newtheorem{assumption}{Assumption}[section]
\newtheorem*{theorem*}{Theorem} %to avoid numbering
\theoremstyle{definition} %non-italic
\newtheorem{remark}{Remark}[section]
\newtheorem{definition}{Definition}[section]
\newtheorem{example}{Example}[section]
\theoremstyle{remark}
\newtheorem*{remark-non}{Remark}
\newcommand{\R}{\mathbb{R}}
\newcommand{\eps}{\varepsilon}
\newcommand{\ba}{\begin{aligned}}
\newcommand{\ea}{\end{aligned}}
\newcommand{\be}{\begin{equation}}
\newcommand{\ee}{\end{equation}}
\title{On the doubling of variables technique in first order Hamilton-Jacobi equations}
\author{Charles Bertucci}
\author{Giacomo Ceccherini Silberstein}
\address[1]{CEREMADE, CNRS, Universit\'e Paris Dauphine-PSL, UMR 7534, 75016 Paris, France.}
\address[2]{Dipartimento di Matematica ``Tullio Levi Civita'', Università di Padova, 
Via Trieste 63, 35121, Italy}
\begin{document}

\begin{abstract}
In this paper, we revisit the technique of doubling variables in first order Hamilton-Jacobi equations, especially when the equations arise in optimal control. We show that by tuning the penalization between the two points, we can change drastically the proof, somehow shifting the regularity hypotheses into geometrical properties of the penalization. We present this idea in a finite dimensional setting and then exploit it on equations posed on Wasserstein spaces.\end{abstract}

\maketitle

\setcounter{tocdepth}{1}
\tableofcontents

\section{Introduction}

\subsection{Doubling of variables and central question of the paper}

The main motivation of this paper is to reinterpret and slightly change the usual method for proving a comparison principle on viscosity solutions of the Hamilton–Jacobi equation
\begin{equation}\label{e: HJ}
  u(x) + H(x,d_x u) = F(x), \qquad x\in M,
\end{equation}
where $M$ is a boundaryless Riemannian manifold, \(H:T^*M\to\mathbb{R}\) is the Hamiltonian, \(F:M\to\mathbb{R}\) is a given datum, and \(u:M\to\mathbb{R}\) is the unknown function.

Such a reinterpretation is itself motivated by the analysis to the infinite-dimensional, non-smooth setting of the space of probability measures endowed with the \(p\)-Wasserstein distance. In that context one considers the mean-field (MF) Hamilton–Jacobi equation
\begin{equation}\label{eq: WHJ}
  U(\mu) + \int_{M} H\big(\mu,x,d_\mu U(\mu,x)\big)\,d\mu(x) = \mathcal{F}(\mu),
  \qquad \mu\in\mathcal P_p(M),
\end{equation}
where   $p>1$, $\mathcal P_p(M)$ is the $p$-Wasserstein space over $M$, $H:P_p(M)\times T^*M \to\mathbb{R}$ and $ \mathcal F:\mathcal P_p(M)\to\mathbb{R}$ are the data and $ U:\mathcal P_p(M)\to\mathbb{R}$ is the unknown. The facts that $\mathcal P_p(M)$ is neither a vector space, nor a Riemannian manifold make the study of \eqref{eq: WHJ} particularly difficult. Furthermore, we shall come back on the motivation for such equation later on.\\

Omitting several technical aspects, we reproduce the main argument of the proof of the comparison principle in the periodic case (i.e. $M = \mathbb T^d$, the $d$ dimensional torus), to explain our strategy. Considering $u$ and $v$, respectively sub and super-solution of \eqref{e: HJ}, we take $\eps > 0$ and define\footnote{We work as in the periodic setting in what follows in this introduction.}
$$
w(x,y) = u(x) - v(y) -\frac{1}{2\eps}|x-y|^2.
$$
Considering a point of maximum $(x_\eps,y_\eps)$ of $w$, we obtain that $p_\eps:=\eps^{-1}(x_\eps - y_\eps) \in \partial^+u(x_\eps)\cap\partial^-v(y_\eps)$. Using that $u$ and $v$ are viscosity solutions, we obtain 
$$
\max(u-v) \leq u(x_\eps) - v(y_\eps) \leq H(y_\eps,p_\eps)-H(x_\eps,p_\eps) + F(y_\eps) - F(x_\eps).
$$
The proof then concludes under assumptions of the type: $F$ is continuous and there exists $C > 0$ such that for all $p \in \R^d,x,y\in \mathbb{T}^d$, $|H(x,p) - H(y,p)| \leq C(1+ |p|)|x-y|$. Indeed in this case, we arrive at 
$$
\max (u-v) \leq \frac C\eps|x_\eps -y_\eps|^2  + o(1) = o(1),
$$
since $\eps^{-1}|x_\eps -y_\eps|^2 \to_{\eps \to 0}0$ under very mild assumptions. We refer to \cite{crandall1992usersguideviscositysolutions} for much more involved developments on this question.\\

Adapting this proof to the case of $\mathcal P_p(M)$ is quite non-trivial. A first answer has been given in \cite{bertucci2024stochasticoptimaltransporthamiltonjacobibellman} in the case of $\mathcal P(\mathbb T^d)$, namely by replacing $|x-y|^2$ by an appropriate Wasserstein distance, and then using an appropriate notion of super-differentiability of such functions to create elements in the super/sub differentials of $u$ and $v$. Studying cases in which $M$ has a non-flat geometry remains open, such as many variants of \eqref{eq: WHJ} in which more singular terms appear. Furthermore, the authors are currently studying an optimal control of positive measures endowed with Hellinger-Kantorovich like distances \cite{bertucciceccherini}, in which simply using the Wasserstein distance does not suffice. Hence, we believe that a novel point of view on the proof of doubling of variables is required to solve several HJ equations on Wasserstein spaces. We do not claim to completely solve the problem with what follows, but we believe our approach can be insightful in many cases.\\

We start with the following elementary remark that, when $H(x,p) = |p|^2$, the previous proof \emph{simplifies} in the sense that we do not need to have a quantitative estimate on the error in the two terms in $H$ since they cancel each other perfectly. Of course, such a quadratic Hamiltonian satisfies the previous estimate but we argue that it is not used here. Indeed, in this quadratic case, we can interpret the penalization term $(2\eps)^{-1}|x-y|^2$ as the cost to go from $x$ to $y$ in time $\eps$ if we pay a quadratic cost $\frac12|\alpha|^2$ on the speed $\alpha$ that we choose. Moreover, in this case, the quadratic Hamiltonian is exactly the one associated to the quadratic cost.

A similar fact happens in the purely Wasserstein setting. The main objective of this paper is thus to make precise the following heuristics which is a generalization of the previous remark: when trying to prove a comparison principle for an HJB equation associated to an optimal control problem, it is quite natural to use a penalization function which is constructed with the optimal control problem itself. More generally, even when the Hamiltonian do not arise from an optimal control problem (i.e. when it is not convex), we show that a the geodesic distance is an effective penalization to use in doubling of variables, in both \eqref{e: HJ} and \eqref{eq: WHJ}.

\subsection{Structure of the paper}

In Section \ref{s:Assumption}, we list the main assumptions, recall the required notions of differential geometry and superdifferential calculus on $M$, and some properties of action functionals that will be useful later. 

 Section \ref{s: Finite Hamilton Jacobi} is devoted to defining the notion of a viscosity solutions for \eqref{e: HJ}, as well as establishing suitable comparison principles for this notion. We address both the cases in which $H$ is non-convex with the geodesic distance as a penalization and the case of certain convex $H$ with an exactly appropriate penalization. It will be apparent in the proofs that both arguments have a global character, as they both make essential use of the global super-differentiability of the penalization function (Proposition \ref{superdifferentiability}) rather than localizing everything in a single chart to use standard techniques in $\R^d$.

In Section \ref{s: PrelWass}, we introduce the mean-field Lagrangian framework together with the associated Fenchel duality.  
We also present an equivalent formulation of the optimal transport problem (see Theorem \ref{t: equivalent}), which will play a crucial role in the superdifferential calculus on $\mathcal{P}_p(M)$.

Finally, in Section \ref{s: wComparison}, we define the notion of viscosity solution to \eqref{eq: WHJ} and establish the analogues of the two results (convex or non-convex $H$) that we obtained in Section \ref{s: Finite Hamilton Jacobi}.

We remark that the time-evolutive HJ equation can also be treated with the same methods, with adaptations that are standard in the literature of viscosity solutions.

The rest of the introduction is devoted to bibliographical comments.

\subsection{Bibliographical comments}
Doubling of variables techniques for Hamilton-Jacobi equations are deeply linked with the theory of viscosity solutions, developed by Crandall and Lions \cite{crandall1983viscosity} in similar cases as the one presented above but set in $\R^d$. Quite rapidly, it has been apparent that similar techniques could be used in infinite dimensional Banach spaces \cite{CrandallLions1984}. We remark that the developments of the theory to second order equations was much more involved, as explained in \cite{crandall1992usersguideviscositysolutions}. Note that the extension of the theory of viscosity solutions to second order equations was possible because of a better understanding of the doubling of variables technique.

A theory of HJB equations set on Riemannian manifold has also been developped, and we refer to \cite{Fathi_2007} for more details on this topic.

More recently, HJ equations on Wasserstein spaces have gained a lot interest recently, for mainly two reasons. The first one is their use in the MF control and the second is the development of powerful tools of analysis on such spaces in the theory of mean field games, initiated by Lasry and Lions \cite{LLMeanFG}, tools which complemented the already existing ones, presented in details in \cite{AGS}. See \cite{bertucci2026} for an overview of analysis on spaces of measures. The first major theoretic work on such HJ equations was done by Lions and presented in \cite{College}, using a so-called lifting approach. Later on, the link of this lifting technique with a more intrinsic approach was studied in \cite{GANGBO2019119}, for convex Hamiltonians. A more general point of view was then adopted in \cite{bertucci2024stochasticoptimaltransporthamiltonjacobibellman} in a compact case, see also \cite{aussedat}. Numerous works are also concerned with more singular equations such as \cite{BertucciApprox}, modelling the presence of either common or idiosyncratic noises, using quite different techniques than the ones used here, so we do not enter in this long literature here and refer to the introduction of \cite{bertucci2024stochasticoptimaltransporthamiltonjacobibellman} for more details.

\section{Notation and assumptions}\label{s:Assumption}

\subsection{Structure of the ambient space} Let $M$ be a smooth connected manifold without boundary and  $TM, T^*M$ its tangent and cotangent bundle, respectively. We denote by
\[
\begin{aligned}
    \pi \; & : \; TM \to M, \quad \pi^{*} \;  : \; T^{*}M \to M \\
     &\quad (x,v) \mapsto x, \quad (x,p) \mapsto x
\end{aligned}
\]
the correspondent natural projections, and by $p(v)$ the evaluation of an element $p \in T^{*}M$ at $v \in TM$. 
We fix a complete Riemannian structure: We denote by $g$ the Riemannian metric on $M$, and by $g^*$ its corresponding dual. The associated norms are $\|\cdot\|$ and $\|\cdot\|_{*}$, respectively.

The integral length of a smooth curve $\gamma:[a,b]\to \R$ is defined as 
\begin{equation*}
    \text{Length}_{g}(\gamma):=\int_a^b \|\dot \gamma(t)\|_{{\gamma(t)}}dt.
\end{equation*}
 For every $I \subseteq \R$ closed interval we define 
\begin{equation}
    \Gamma_I(x,y)=\Big\{\gamma: I \to M \Big| \gamma \text{ piecewise } C^1, \gamma(\inf I)=x, \gamma(\sup I)=y \Big\},
\end{equation}
to be the our set of admissible curves.
The function $d: M \times M \to [0,\infty)$ defined by
\begin{equation*}
    d(x_0,x_1):=\inf_{\gamma \in \Gamma_{[0,1]}(x_0,x_1)} \text{Length}_{g}(\gamma)
\end{equation*}
is the geodesic distance between the two points $x,y\in M$.

Saying that $g$ is complete is equivalent to the completeness of the metric space $(M,d)$. See Hopf-Rinow's  \cite[Theorem~2.8]{doCarmo}.
Equivalently (geodesic completeness), the exponential map associated to $g$, $\text{exp}: TM \to M$, is well defined on the entire tangent bundle.
Fix $x,y \in M$, we denote by $\mathcal{T}_y^x[\gamma]: T_yM \to T_xM$ the parallel transport along the curve $\gamma : [0,1] \to M$ connecting $x=\gamma(0)$ and $y=\gamma(1)$. The parallel transport along any curve is an isometry, see  \cite[p. ~56]{doCarmo}. Furthermore, parallel transport along geodesic has the following property
\begin{equation}\label{PTgeodesic}
\mathcal{T}_{\gamma(s)}^{\gamma(t)}[\gamma](\dot \gamma(s))=\dot \gamma(t), \quad \forall s\leq t 
\end{equation}
Given $x,y \in M$,
we will denote by $\text{Geo}_M(x,y)$ the set of \emph{minimizing} geodesics connecting $x$ and $y$.\\

We finally, we recall the definition of the Sasaki distance $D_S$ on the tangent bundle $TM$. It is defined as $D_S: TM \times TM \to [0,\infty)$:
\[
D_S((x,v), (y,z)) = \inf_{\gamma \in \Gamma_{[0,1]}(x,y)} 
\left\{ 
\int_0^1 \|\dot \gamma(s)\|^2_{\gamma(s)}\,ds 
+ \|\mathcal T_y^x[\gamma](z)-v\|^2_x 
\right\}^{1/2},
\quad (x,v),(y,z) \in TM.
\]
See also \cite[p.~10]{Gigli}. This distance satisfies the following properties:
\begin{enumerate}
\item Behavior along vertical and horizontal directions 
\begin{align}\label{e: vertical}
    D_S\big((x,v), (x,v+w)\big) &= \|w\|_x 
    \qquad \forall (x,v) \in TM, \; w \in T_xM.
\end{align}
\begin{align}\label{e: horizontal}
    d(x,y)&\leq D_S\big((x,v), (y,w)\big)
    \qquad \forall (x,v),(y,w) \in TM,
\end{align}
and equality holds if $v=0, w=0$, and whenever $v=\dot\gamma(0)$, $w=\dot\gamma(1)$, where $\gamma:[0,1]\to M$ is a minimizing geodesic connecting $x,y$. 
\item In particular, by the triangle inequality,
\begin{equation}
D_S\big((x,v+w), (y,v'+w')\big)
    \leq \|w\|_x + \|w'\|_y + D_S\big((x,v), (y,v')\big)
    \qquad \forall (x,v),(y,v') \in TM, \; w \in T_xM, \; w' \in T_yM.
\end{equation}
\item For every minimizing geodesic $\gamma: [0,1] \to M$ connecting $x,y \in M$, the following bound holds:
\begin{equation}
    D_S\big((x,v),(y,w)\big)
    \leq d(x,y) + \|w - \mathcal{T}_x^y[\gamma](v)\|_y.
\end{equation}
See also \cite[p. ~10]{Gigli}.
\item $\forall (x,v),(y,w)\in TM$ we have 
\begin{equation}\label{e: boundnormedistanza}
    \|v\|_x-\|w\|_y\leq 2D_S((x,v),(y,w)) \quad \forall (x,v),(y,w)\in TM. 
\end{equation}
In fact, consecutive application of the triangular inequality shows that for all $(x,v),(y,w)\in TM$ we have
\begin{align*}
     \|v\|_x-\|w\|_y&\underbrace{=}_{\eqref{e: vertical}} D_S((x,0),(x,v))-D_S((y,0),(y,w))\\
     &\leq D_S((x,v),(y,w))+D_S((x,0),(y,w))-D_S((y,0),(y,w))\\
     &\leq D_S((x,v),(y,w)) + D_S((x,0),(y,0))\\
     &\underbrace{=}_{\eqref{e: horizontal}}  D_S((x,v),(y,w)) + d(x,y) \\
     &\underbrace{\leq}_{\eqref{e: horizontal}} 2 D_S((x,v),(y,w)).
\end{align*}
\end{enumerate}

\subsection{Assumptions on the Hamiltonian and the associated Lagrangian in the convex setting}\label{sec:hypgeo}
As stated in the introduction, we shall study the equations \eqref{e: HJ} and \eqref{eq: WHJ} in two regimes: a general one under a classical regularity assumption on the Hamiltonian, and one where the Hamiltonian is convex and where we want to use more geometrical arguments. In this second case, we restrict ourselves to Hamiltonians satisfying the following requirements, which we will call \emph{geometric Hamiltonians} for the sake of convenience. The following requirements are taken from \cite{Fathi_2007} and \cite{Fathi2010}.

A geometric Hamiltonian is a function $H:T^*M \to \R$ of class $C^1$ which satisfies
\begin{enumerate}[label=(H\arabic*), ref=H\arabic*, leftmargin=2em]
    
    \item \label{hypoH1} For all $x \in M$, the map $z \mapsto H(x,z)$ is strictly convex and superlinear on $T^{*}_xM$.
    
    \item \label{hypoH2}\textit{Uniform superlinearity property:} For every $K \geq 0$, there exists a constant $C^*(K) \in \mathbb{R}$ such that
    \[
    \forall (x,z) \in T^*M, \quad H(x,z) \geq K {\|z\|_{*}}_x - C^*(K).
    \]

    \item \label{hypoH3}\textit{Uniform boundedness property:} For every $R \geq 0$, we have
    \[
    A^*(R) = \sup \left\{ H(x,z) \;\middle|\; {\|z\|_{*}}_x  \leq R, \, x \in M \right\} < \infty.
    \]
\end{enumerate}
Associated to a geometric Hamiltonian, we consider its \textit{Lagrangian} $L: TM  \to \R$ defined via the Fenchel Transform:
\begin{equation}
    L(x,v):=\max_{z \in T_x^{*}M} z(v)-H(x,z).
\end{equation}

It is easy to show that the previous assumptions on $H$ imply that $L$ satisfies the following properties
\begin{enumerate}[label=(L\arabic*), ref=L\arabic*, leftmargin=2em]
    \item \label{hypoL1} It is finite and of class $C^1$ on $TM$.
    
    \item \label{hypoL2} For all $x \in M$, the map $v \mapsto L(x,v)$ is strictly convex and superlinear on $T_xM$.
    
    \item \label{hypoL3} $L$ is dual to $H$, i.e., the following duality holds:
    \begin{equation}
        H(x,p) = \max_{v \in T_xM} \left[ p(v) - L(x,v) \right].
    \end{equation}
\end{enumerate}
The \emph{Legendre transform} is the homeomorphism (Proposition B.9 \cite{Fathi2010}) $\mathcal{L}
: TM \to T^*M$ defined by 
\begin{equation} \label{legendre}
\mathcal{L}(x,v)=(x,\frac{\partial L}{\partial v}(x,v)).
\end{equation}
Moreover, we have for all $(x,z,v)$ such that $(x,z) \in T^{*}M,  (x,v) \in TM$,
\begin{equation}\label{e: legendreequality}
    z(v)=H(x,z)+L(x,v) \iff (x,z)=\mathcal{L}(x,v).  
\end{equation}

Thus, $\mathcal{L}$ can be seen as a (nonlinear) duality map between $TM$ and $T^{*}M$.
In the case $L=\frac{g^2}{2}$, the Legendre transform coincides with the usual (linear in $T_xM$) duality map induced by the Riemannian structure, i.e. $\mathcal{L}(x,v)=g_x(v,\cdot)$. For convenience in this setting (See also Example \ref{ex: p-norm}), we set $J_2(x,v):=\mathcal{L}(x,v)$. We remark for future use that 
\begin{equation}\label{e: isometrydual}
    g^{*}_x(J_2(x,v), J_2(x,w))=g_x(v,w), \quad \forall v,w \in T_xM.
\end{equation}

Under the assumptions on $H$ it can be proved (\cite[Lem. ~2.1]{Fathi_2007}) that the following two properties hold

\begin{enumerate}[label=(L\arabic*), ref=L\arabic*, leftmargin=2em, start=4]
    \item \label{hypoL4} \textit{Uniform superlinearity property:} For every $K \geq 0$, there exists $C(K) \in \mathbb{R}$ such that
    \begin{equation*} \label{growthL}
        \forall (x,v) \in TM, \quad L(x,v) \geq K \|v\|_x - C(K).
    \end{equation*}

    \item \label{hypoL5} \textit{Uniform boundedness property:} For every $R \geq 0$, we have
    \begin{equation*}
        A(R) = \sup \{ L(x,v) \mid \|v\|_{x} \leq R, \,  x \in M \} < \infty
    \end{equation*}
\end{enumerate}
In the literature, under our assumptions, $L$ is said to be a \emph{weak Lagrangian} (\cite[Def. ~ B.4]{Fathi2010}).\footnote{There is, however, a difference for \eqref{hypoL4}--\eqref{hypoL5}.  
In \cite{Fathi2010}, only \emph{local} superlinearity and boundedness are assumed,  
whereas here we adopt stronger \emph{uniform} conditions.  
This choice is motivated by their applicability in the Wasserstein setting (see the last sections).
}
\begin{definition} \label{d: dissipativeLagrangian}
Let $L$ be a weak Lagrangian.
Whenever $L\geq 0$ and $L(x,0)=0, \, \forall x \in M$, we say that $L$ is a \textit{dissipative Lagrangian}.
A weak Lagrangian is said to be a \emph{Tonelli Lagrangian} if $L$ is $C^2$ and strictly convex in each fiber, in the $C^2$ sense; that is, the second vertical derivative
$D^2_v L(x,v)$
is positive definite, as a quadratic form, for all $(x,v) \in TM$.

\end{definition}

The \textit{Dual Energy}  $\hat H \colon  TM \to \R$ is defined as
\begin{equation}\label{Dual Hamiltonian}
    \hat H(x,v):=H(x, \frac{\partial L}{\partial v}(x,v)).
\end{equation}

\begin{example}\label{ex: p-norm}
Consider $L(x,v)= \frac{\|v\|_x^{p}}{p}$, where $p> 1$.  Let  $q$ be the conjugate exponent to $p$, i.e. $\frac{1}{p}+\frac{1}{q}=1$.
Then $L$ is a $C^1$ and weak Tonelli Lagrangian (See also Example B.5 \cite{Fathi2010}). Moreover, it is dissipative. In this case, 
$$J_{p}(x,v):=\frac{\partial L}{\partial v}(x,v)=\|v\|^{p-2}_x g_x(v,\cdot)=\|v\|^{p-2}_xJ_2(x,v), \, \forall (x,v)\in TM.$$ 
We also note for future use that 
\begin{equation}\label{dualitynorms}
    {\|J_{p}(x,v)\|_{*}}_x=\|v\|^{p-2}_x {\|J_{2}(x,v)\|_{*}}_x= \|v\|^{p-1}_x, \quad  \forall (x,v)\in T^*M. 
\end{equation}
In particular,
$$
{\|J_{p}(x,v)\|^{q}_{*}}_x=\|v\|^{(p-1)q}_x=\|v\|^{p}_x
$$
The associated Hamiltonian is $H(x,z)=\frac{{{\|z\|}^q_{*}}_{x}}{q}$. In fact, if $z=J_p(x,v)$ by \eqref{e: legendreequality} 
$$
H(x,z)=J_p(x,v)(v)-\frac{1}{p}\|v\|^{p}_x= \|v\|^{p}-\frac{1}{p}\|v\|^{p}_x=\frac{1}{q}\|v\|^p=\frac{1}{q}{\|J_p(x,v)\|_{*}}^{q}_x=\frac{1}{q}{\|z\|_{*}}^{q}_x.
$$

\end{example}

We conclude this subsection recalling the reversible case, in which the Lagrangian satisfies
\begin{equation}
     L(x,v)=L(x,-v) \quad \forall (x,v)\in TM.
\end{equation}
and $L(x,0)= 0$.
In particular, by the strict convexity, fixed $x\in M$ the function $v \in T_xM \mapsto L(x,v)$ has a unique minimum at $0$, and we infer $L\geq 0$. In other words, every reversible Lagrangian s.t. $L(\cdot,0)\equiv0$  is dissipative. 
Note also that if $L$ is reversible, its associated Hamiltonian $H$ satisfies 
\begin{equation}\label{Symmetry Hamiltonian}
H(x,p)=\inf_{v \in T_{x}M}p(v)-L(x,v)=\inf_{v \in T_{x}M}-p(v)-L(x,v)=H(x,-p), \quad \forall (x,p) \in T_x^{*}M.
\end{equation}

\subsection{Action Functional} 
From now on, we suppose the Lagrangian $L$ to satisfy \eqref{hypoL1}--\eqref{hypoL5}.

Given such $L$ and $I \subseteq \R$ a closed interval, we define the associated action as 
\begin{equation*}
    \mathcal{A}_{I}(\gamma):=\int_{I}L(\gamma(t),\dot \gamma(t))dt.
\end{equation*}
Fix $t>0$, the \textit{minimal action} is defined as 
\begin{equation}\label{action}
    D(t,x,y):=\inf_{\gamma \in \Gamma_{[0,t](x,y)}}\mathcal{A}_{[0,t]}(\gamma).
\end{equation}

\begin{example}[Minima of the action functional and geodesics]
\label{Lagrangian&Geo}

Given $p> 1$, we consider the Lagrangian $L(x,v)=\frac{g_x(v,v)^{p}}{p}$. By H\"older-inequality, for every $\gamma \in \Gamma_{I}(x_0,x_1)$
\begin{equation*}
    \text{Length}_g(\gamma)^p\leq |I|^\frac{p}{q}\mathcal{A}_{I}(\gamma), 
\end{equation*}
where $q$ is the conjugate exponent to $p$, and $|I|$ is the lenght of the interval $I$. The equality holds iff $g_{\gamma(t)} (\dot \gamma(t),\dot \gamma(t))$ is a.e. constant, i.e. the parametrization is proportional to the arc lenght.
Since minimizing geodesic are parametrize by the arc lenght, we have that, if $\gamma$ is a minimizing geodesic, then
\begin{equation*}
    \vert I \vert^{\frac{p}{q}} \mathcal{A}_{I}(\gamma)= \text{Length}_g(\gamma)^{p}\leq \text{Length}_g(c)^{p}\leq \vert I \vert^{\frac{p}{q}} \mathcal{A}_{I}(c), \quad  \forall c :I \to M \text{ piecewise } C^1,
\end{equation*}
with equality iff $c$ is a minimizing geodesics. Thus, the minimization problems for $\mathcal{A}_{I}$ and for $\text{Length}_g$ are equivalent, and the minima are geodesics.
Moreover, in this case we have
\begin{equation}
D(\varepsilon,x,y)=\frac{d^p(x,y)}{p\varepsilon^{p-1}}.
\end{equation}
\end{example}

More generally we have the following existence result
\begin{theorem}\label{t: existence}(\cite[Thm ~ B.6]{Fathi2010})
Suppose $L$ is a weak Lagrangian. Then for every $a,b \in \R$, $a<b$ and every $x,y \in M$, there exists an absolutely continuous curve $\gamma : [a,b] \to M$ which is a minimizer of $\mathcal{A}_{[a,b]}$ whith $\gamma(a)=x$ and $\gamma(b)=y$.
\end{theorem}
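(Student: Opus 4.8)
The plan is to run the classical direct method of the calculus of variations, as in \cite{Fathi2010}, with the superlinearity of $L$ playing the decisive role at every step. First I would check that the infimum $m:=\inf\{\mathcal A_{[a,b]}(\gamma):\gamma\in\Gamma_{[a,b]}(x,y)\}$ is finite: since $(M,g)$ is complete and connected, Hopf–Rinow provides a minimizing geodesic from $x$ to $y$, and reparametrizing it affinely on $[a,b]$ yields an admissible curve with bounded speed and compact image, hence finite action because $L$ is continuous; on the other side $m\geq -C(0)(b-a)>-\infty$ by \eqref{hypoL4} with $K=0$.

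Next, I would take a minimizing sequence $\gamma_n$ with $\mathcal A_{[a,b]}(\gamma_n)\leq m+1$. Using \eqref{hypoL4}, for every measurable $E\subseteq[a,b]$ and every $K>0$ one gets $\int_E\|\dot\gamma_n(t)\|_{\gamma_n(t)}\,dt\leq K^{-1}\big(\mathcal A_E(\gamma_n)+C(K)|E|\big)$, and $\mathcal A_E(\gamma_n)\leq \mathcal A_{[a,b]}(\gamma_n)+C(0)(b-a)\leq m+1+C(0)(b-a)$ because $L\geq -C(0)$. Choosing $K$ large and then $|E|$ small, this simultaneously bounds the lengths of the $\gamma_n$ uniformly — so all the $\gamma_n$ lie in the compact ball $\bar B(x,\ell)$ by Hopf–Rinow — and shows that $(\gamma_n)$ is equicontinuous. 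Arzelà–Ascoli then extracts a subsequence converging uniformly to a continuous $\gamma:[a,b]\to M$ with $\gamma(a)=x$, $\gamma(b)=y$. To upgrade $\gamma$ to an absolutely continuous curve, I would fix a smooth proper embedding $\iota\colon M\hookrightarrow\R^N$ (or work in finitely many charts covering $\bar B(x,\ell)$): on the compact set $\bar B(x,\ell)$ the ambient speeds satisfy $|\tfrac{d}{dt}(\iota\circ\gamma_n)|\leq C_0\|\dot\gamma_n\|_{\gamma_n}$, so the velocities of $\iota\circ\gamma_n$ are bounded and \emph{equi-integrable} in $L^1([a,b];\R^N)$ — this is exactly the De la Vallée–Poussin criterion, available precisely because $L$ is superlinear — whence Dunford–Pettis gives a further subsequence whose velocities converge weakly in $L^1$; the uniform convergence of $\iota\circ\gamma_n$ identifies the weak limit with $\tfrac{d}{dt}(\iota\circ\gamma)$, so $\iota\circ\gamma$, and hence $\gamma$, is absolutely continuous.

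Finally, I would invoke the classical weak lower semicontinuity theorem for integral functionals whose integrand is continuous and convex in the velocity variable (Tonelli/Ioffe), transferred to the manifold through the embedding (after a convex extension of $L$ to a neighborhood of $\iota(M)$) or chart by chart: combining the fiberwise convexity \eqref{hypoL2} and regularity \eqref{hypoL1} of $L$ with the uniform convergence $\gamma_n\to\gamma$ and the weak convergence of the velocities, one obtains $\mathcal A_{[a,b]}(\gamma)\leq\liminf_n \mathcal A_{[a,b]}(\gamma_n)=m$. Since $\gamma$ is admissible this forces $\mathcal A_{[a,b]}(\gamma)=m$, so $\gamma$ is the desired minimizer (and the infimum over piecewise $C^1$ curves coincides with that over absolutely continuous ones by a routine smoothing argument). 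The only genuinely delicate points are guaranteeing that the uniform limit is \emph{absolutely continuous} rather than merely of bounded variation — which is where superlinearity is indispensable, as it forbids concentration of the velocities — and carrying out the convex lower-semicontinuity step cleanly without a global chart; everything else is a routine application of Hopf–Rinow and Arzelà–Ascoli, and since this is a restatement of \cite[Thm.~B.6]{Fathi2010} one may alternatively just cite it and sketch only the compactness and lower-semicontinuity steps.
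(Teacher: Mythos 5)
The paper does not prove Theorem~\ref{t: existence} itself; it invokes it by direct citation to \cite[Thm.~B.6]{Fathi2010}. Your argument is a correct reconstruction of the classical Tonelli direct method used in that reference — finiteness of the infimum via the geodesic and \eqref{hypoL4}, equi-integrability and equicontinuity of a minimizing sequence from superlinearity, Arzelà–Ascoli plus Dunford–Pettis to obtain an absolutely continuous limit, and convex weak lower semicontinuity to conclude — so it matches the intended approach.
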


The following lemma will be useful in the comparison principle.
\begin{lemma}\label{Decreasing}
Let $L$ be a dissipative Lagrangian. 
The function $D(\varepsilon, \cdot,\cdot): M \times M \to [0,\infty)$ 
\begin{equation}
    D(\varepsilon,x,y):=\inf_{\Gamma_{[0,\varepsilon]}(x_0,x_1)} \int_0^\varepsilon L(\gamma(t), \dot \gamma(t))dt
\end{equation}
is non-negative and 
\begin{equation}
    D(\varepsilon,x,y)> D(\tau\varepsilon,x,y) \quad \forall \tau> 1.
\end{equation}

\end{lemma}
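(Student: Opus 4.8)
The plan is to treat the two assertions separately. Non-negativity is immediate from dissipativity: since $L\ge 0$ we have $\mathcal{A}_{[0,\varepsilon]}(\gamma)\ge 0$ for every admissible curve $\gamma$, hence $D(\varepsilon,x,y)\ge 0$. For the strict monotonicity in $\varepsilon$, the guiding idea is that with more time at disposal one may traverse the same path more slowly, and that slowing down \emph{strictly} lowers the action, because $v\mapsto L(x,v)$ is strictly convex with minimum value $0$ attained at $v=0$.

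Concretely, fix $\tau>1$. I would first dispose of the degenerate case $x=y$: there the constant curve gives $D(\varepsilon,x,x)=D(\tau\varepsilon,x,x)=0$, so the inequality is really a statement about $x\neq y$, which I assume henceforth. A dissipative Lagrangian is in particular a weak Lagrangian, so Theorem \ref{t: existence} furnishes an absolutely continuous \emph{minimizer} $\gamma$ of $\mathcal{A}_{[0,\varepsilon]}$ joining $x$ to $y$ (and the infimum defining $D$ is unchanged when one allows absolutely continuous competitors); moreover $D(\varepsilon,x,y)<\infty$, e.g. by evaluating the action on a constant-speed geodesic from $x$ to $y$ and using \eqref{hypoL5}. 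Then I would introduce the slowed-down curve $\tilde\gamma(t):=\gamma(t/\tau)$ on $[0,\tau\varepsilon]$, which still joins $x$ to $y$ and satisfies $\dot{\tilde\gamma}(t)=\tfrac1\tau\dot\gamma(t/\tau)$ for a.e.\ $t$, so that after the change of variables $s=t/\tau$,
\[
\mathcal{A}_{[0,\tau\varepsilon]}(\tilde\gamma)=\tau\int_0^{\varepsilon}L\!\left(\gamma(s),\tfrac1\tau\dot\gamma(s)\right)ds .
\]

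The heart of the matter is the pointwise bound $\tau\,L(\gamma(s),\tfrac1\tau\dot\gamma(s))\le L(\gamma(s),\dot\gamma(s))$, obtained by writing $\tfrac1\tau\dot\gamma(s)$ as the convex combination $\tfrac1\tau\dot\gamma(s)+(1-\tfrac1\tau)\cdot 0$ and using convexity of $L(\gamma(s),\cdot)$ together with $L(\gamma(s),0)=0$; strict convexity \eqref{hypoL2} upgrades this to a \emph{strict} inequality at every $s$ with $\dot\gamma(s)\neq 0$, while at $s$ with $\dot\gamma(s)=0$ both sides vanish. Since $\gamma$ is absolutely continuous and non-constant (because $x\neq y$), the set $\{s\in[0,\varepsilon]:\dot\gamma(s)\neq 0\}$ has positive Lebesgue measure, so integrating the pointwise estimate yields $\mathcal{A}_{[0,\tau\varepsilon]}(\tilde\gamma)<\mathcal{A}_{[0,\varepsilon]}(\gamma)=D(\varepsilon,x,y)$; as $\tilde\gamma$ is a competitor for $D(\tau\varepsilon,x,y)$ this gives $D(\tau\varepsilon,x,y)\le\mathcal{A}_{[0,\tau\varepsilon]}(\tilde\gamma)<D(\varepsilon,x,y)$.

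I expect the only genuine subtlety to be the passage from $\le$ to $<$: working with an almost-minimizing sequence rather than a true minimizer would only produce $D(\tau\varepsilon,x,y)\le D(\varepsilon,x,y)$ in the limit, so it is essential to have the minimizer provided by Theorem \ref{t: existence} and the a priori finiteness of $D(\varepsilon,x,y)$. The reparametrization computation and the convexity estimate are otherwise routine.
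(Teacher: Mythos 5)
Your proof is correct and follows the same route as the paper's: reparametrize an optimal curve from $[0,\varepsilon]$ to $[0,\tau\varepsilon]$, write $\tfrac1\tau\dot\gamma$ as the convex combination $\tfrac1\tau\dot\gamma + (1-\tfrac1\tau)\cdot 0$, and invoke strict convexity together with $L(\cdot,0)=0$. The one substantive difference is that you explicitly isolate the degenerate case $x=y$, where the constant curve forces $D(\varepsilon,x,x)=D(\tau\varepsilon,x,x)=0$ and the claimed strict inequality in fact fails. This is a genuine imprecision in the lemma as stated: the paper's chain of estimates silently uses strictness of the pointwise bound $\tfrac1\tau L(y,w) > L(y,\tfrac1\tau w)$, which only holds for $w\neq 0$, and so the integral inequality is strict only when $\{\dot\gamma\neq 0\}$ has positive measure, i.e.\ when $x\neq y$. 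Your version, which also spells out that a true minimizer (not merely an almost-minimizing sequence) is needed to pass from $\le$ to $<$, is the tighter argument; the paper's statement should be read with ``$x\neq y$'' or replaced by the non-strict $\ge$.
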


\begin{proof}
Fix $\varepsilon, \tau>0$, we denote by 
\begin{align*}
    h_\tau: &\Gamma_{[0,\varepsilon]}\to \Gamma_{[0,\tau\varepsilon]}\\
    & \gamma(t) \mapsto \gamma(\frac{t}{\tau})
\end{align*}
Obviously, $\frac{d}{dt}h_\frac{1}{\tau}(\gamma)(t)=\frac{1}{\tau}\dot\gamma(\frac{t}{\tau})$ $t-\, a.e$. 
Let $\gamma \in \Gamma_{[0,\varepsilon]}$ be an optimizer for the action functional (\ref{t: existence}), and fix $\tau>1$. Then we have 
\begin{align*}
D(\varepsilon,x,y) &= \int_0^{\varepsilon} L(\gamma(t), \dot{\gamma}(t)) \, dt \\
&= \frac{1}{\tau} \int_0^{\tau \varepsilon} L(h_{\frac{1}{\tau}}(\gamma), \dot\gamma(\frac{t}{\tau})) \, dt\\
&\underbrace{>}_{\text{strict convexity} \& L(\cdot,0)=0}  \int_0^{\tau \varepsilon} L(h_{\frac{1}{\gamma}}(\gamma), \frac{1}{\tau}\dot \gamma(\frac{t}{\tau})) \\
&= \int_0^{\tau \varepsilon} L(h_\frac{1}{\tau}(\gamma)(t), \frac{d}{dt}h_{\frac{1}{\tau}}(\gamma)(t)) \, dt \\
& \geq D(\tau \varepsilon, x,y).
\end{align*}
\end{proof}
This function $D$ is the one we shall use as a penalization in the convex case.

We now state without proof the regularity result for minimizers. 
\begin{theorem}\cite[Thm~ B.7\& Cor. ~ B.15]{Fathi2010}\label{t:EL}
If $L$ is a weak Tonelli Lagrangian, then every minimizer $\gamma: [a,b] \to M$ is $C^1$. Moreover, on every interval $[t_0,t_1]$ s.t. $\gamma([t_0,t_1])$ is contained to a chart, it satisfies the following equality written in the coordinate system
\begin{equation*}
    \frac{\partial L}{\partial v}(\gamma(t_1), \dot \gamma(t_1)) - \frac{\partial L}{\partial v}(\gamma(t_0), \dot \gamma(t_0))= \int_{t_0}^{t_1}\frac{\partial L}{\partial x}(\gamma(s), \dot \gamma(s))ds.
\end{equation*}
In particular, $\frac{\partial L}{\partial v}(\gamma(t), \dot \gamma(t))$ is $C^1$ as a function of $t$ and satisfies the Euler-Lagrange equation 
\begin{equation*}
    \frac{d}{dt}\frac{\partial L}{\partial v}(\gamma(t), \dot \gamma(t))=\frac{\partial L}{\partial x}(\gamma(t), \dot \gamma(t)).
\end{equation*}
Moreover, the energy $\hat H$ is constant on the speed curve
$s \mapsto (\gamma(s),\dot \gamma(s))$.
In addition, if $L$ is $C^r$ Tonelli Lagrangian, with $r\geq 2$, then any minimizer is of class $C^r$.
\end{theorem}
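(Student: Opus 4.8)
The plan is the classical Tonelli regularity scheme, carried out locally in coordinate charts (this is exactly \cite[Thm.~B.7 \& Cor.~B.15]{Fathi2010}). Fix a minimizer $\gamma:[a,b]\to M$ of $\mathcal{A}_{[a,b]}$ (which exists and is absolutely continuous by Theorem \ref{t: existence}) and an interval $[t_0,t_1]$ with $\gamma([t_0,t_1])$ contained in a single chart; from now on $L=L(x,v)$ is read in these coordinates, $x$ in an open subset of $\R^d$ and $v\in\R^d$. Since $\gamma$ minimizes $\mathcal{A}_{[a,b]}$ among admissible curves with the prescribed endpoints, gluing shows that $\gamma|_{[t_0,t_1]}$ minimizes $\mathcal{A}_{[t_0,t_1]}$ among curves joining $\gamma(t_0)$ to $\gamma(t_1)$. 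First I would derive the integrated Euler--Lagrange (du Bois--Reymond) identity: for $\psi\in C^\infty_c((t_0,t_1);\R^d)$ and $|s|$ small the curve $\gamma+s\psi$ is an admissible competitor remaining in the chart, so $s\mapsto\mathcal{A}_{[t_0,t_1]}(\gamma+s\psi)$ is minimized at $s=0$; differentiating in $s$ (the delicate point, see below) and applying the du Bois--Reymond lemma produce a constant vector $c$ with
\begin{equation*}
\frac{\partial L}{\partial v}\big(\gamma(t),\dot\gamma(t)\big)=c+\int_{t_0}^{t}\frac{\partial L}{\partial x}\big(\gamma(s),\dot\gamma(s)\big)\,ds\qquad\text{for a.e. }t\in[t_0,t_1],
\end{equation*}
which is the asserted identity upon subtracting the values at $t_0$ and $t_1$.

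Granting this, the regularity is bootstrapped as follows. The right-hand side above is absolutely continuous in $t$, so $p(t):=\tfrac{\partial L}{\partial v}(\gamma(t),\dot\gamma(t))$ has a continuous representative. Since $\gamma$ is continuous and the Legendre transform $\mathcal{L}(x,v)=(x,\tfrac{\partial L}{\partial v}(x,v))$ is a homeomorphism of $TM$ onto $T^*M$ (Proposition~B.9 of \cite{Fathi2010}), the a.e.-valid identity $(\gamma(t),\dot\gamma(t))=\mathcal{L}^{-1}(\gamma(t),p(t))$ exhibits $\dot\gamma$ as a.e. equal to a continuous function of $t$; combined with $\gamma(t)=\gamma(t_0)+\int_{t_0}^t\dot\gamma$ this upgrades $\gamma$ to a $C^1$ curve on $[t_0,t_1]$, and patching over a finite cover of $[a,b]$ by such intervals gives $\gamma\in C^1([a,b])$. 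With $\gamma\in C^1$ and $L\in C^2$, the integrand $\tfrac{\partial L}{\partial x}(\gamma(\cdot),\dot\gamma(\cdot))$ is continuous, hence $p\in C^1$ and $\tfrac{d}{dt}\tfrac{\partial L}{\partial v}(\gamma,\dot\gamma)=\tfrac{\partial L}{\partial x}(\gamma,\dot\gamma)$, i.e. the classical Euler--Lagrange equation.

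Higher regularity and energy conservation then follow by short computations. Because $L\in C^2$, the differential of $\mathcal{L}$ is block-triangular with diagonal blocks $\mathrm{Id}$ and $D^2_vL$, the latter invertible by the Tonelli hypothesis, so $\mathcal{L}$ is a local $C^1$-diffeomorphism; as $\dot\gamma(t)$ is then the tangent-space component of $\mathcal{L}^{-1}(\gamma(t),p(t))$, a $C^1$ function of $t$, we get $\gamma\in C^2$. Differentiating the Euler--Lagrange equation and using the invertibility of $D^2_vL$ yields the second-order equation
\begin{equation*}
\ddot\gamma=\big(D^2_vL(\gamma,\dot\gamma)\big)^{-1}\Big(\frac{\partial L}{\partial x}(\gamma,\dot\gamma)-\frac{\partial^2 L}{\partial x\,\partial v}(\gamma,\dot\gamma)\,\dot\gamma\Big),
\end{equation*}
and a routine bootstrap in this ODE gives $\gamma\in C^r$ whenever $L\in C^r$, $r\ge2$. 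Finally, by the Fenchel equality \eqref{e: legendreequality} one has $\hat H(\gamma(t),\dot\gamma(t))=p(t)\big(\dot\gamma(t)\big)-L(\gamma(t),\dot\gamma(t))$; differentiating in $t$ and substituting $\dot p=\tfrac{\partial L}{\partial x}(\gamma,\dot\gamma)$ and $p=\tfrac{\partial L}{\partial v}(\gamma,\dot\gamma)$ makes the four resulting terms cancel in pairs, so $\hat H$ is constant along the speed curve $s\mapsto(\gamma(s),\dot\gamma(s))$.

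The main obstacle is the differentiation under the integral sign in the first step. A priori $\dot\gamma$ is merely integrable, so neither $\tfrac{\partial L}{\partial v}(\gamma+s\psi,\dot\gamma+s\dot\psi)$ nor $\tfrac{\partial L}{\partial x}(\gamma+s\psi,\dot\gamma+s\dot\psi)$ is obviously dominated, uniformly for small $s$, by an $L^1$ function of $t$ --- and the same gap obstructs the claim that the right-hand side of the integrated identity is absolutely continuous. Closing it requires Tonelli's a priori regularity argument: using the uniform superlinearity \eqref{hypoL4} and the fiberwise strict convexity of $L$, one shows that any minimizer has $\dot\gamma$ essentially bounded on every compact subinterval of the interior --- intuitively, a short window on which the speed is very large can be replaced by an affine arc without increasing the action, contradicting minimality --- after which the dominated-convergence step and the entire bootstrap go through. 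For this a priori bound I would simply cite \cite[App.~B]{Fathi2010} rather than reprove it.
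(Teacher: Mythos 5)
The paper states this result without proof, citing \cite[Thm.~B.7 \& Cor.~B.15]{Fathi2010}, so there is no in-paper argument to compare against; your sketch is a faithful reproduction of the classical Tonelli regularity scheme used in that reference, and you correctly isolate the one genuinely nontrivial step (the a priori essential boundedness of $\dot\gamma$ needed both to justify the first variation and to give the right-hand side of the integrated identity meaning) and defer it appropriately.

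One caveat worth flagging: your energy-conservation argument differentiates $p(t)\bigl(\dot\gamma(t)\bigr)-L\bigl(\gamma(t),\dot\gamma(t)\bigr)$ in $t$, which requires $\ddot\gamma$ to exist and therefore applies only after the $C^2$ upgrade, i.e.\ only in the Tonelli ($L\in C^2$) case. The statement, however, asserts energy conservation already for weak Tonelli Lagrangians, where $L$ is merely $C^1$ and the bootstrap yields only $\gamma\in C^1$. The fix is to write $\hat H(\gamma(t),\dot\gamma(t))=H(\gamma(t),p(t))$ and differentiate this instead: with $\gamma,p\in C^1$ and $H\in C^1$ this is legitimate, and substituting $\partial_p H(\gamma,p)=\dot\gamma$, $\dot p=\partial_x L(\gamma,\dot\gamma)$, and the envelope identity $\partial_x H(\gamma,p)=-\partial_x L(\gamma,\dot\gamma)$ (valid for a $C^1$ strictly convex superlinear $L$ by uniqueness of the maximizer in the Fenchel transform) makes the two terms cancel without ever invoking $\ddot\gamma$.
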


The Euler-Lagrangian (E-L) flow associated to a Tonelli Lagrangian $L$ is the flow map defined by initial data, i.e. $\phi^t(x,v)=(\gamma(t),\dot\gamma(t))$, where $\gamma$ is defined in the previous theorem, s.t. $(\gamma(0),\dot \gamma(0))=(x,v)$. Combining the previous two results we have the following, see  \cite[Cor. ~2.2]{Fathi_2007}
\begin{corollary}\label{compltness flow}
The Euler-Lagrange flow $\phi^t: TM \to TM$ of a Tonelli Lagrangian $L$ is complete (i.e. global existence in time).  
\end{corollary}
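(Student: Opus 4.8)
The plan is the classical non-escape argument for an autonomous flow, powered by conservation of energy. Recall first that, as noted just before the statement, the Tonelli hypothesis ($L\in C^2$ with $D^2_vL(x,v)$ positive definite) makes the Euler--Lagrange equation $\frac{d}{dt}\partial_v L(\gamma,\dot\gamma)=\partial_x L(\gamma,\dot\gamma)$ a well-posed first-order ODE on $TM$ (one solves for $\ddot\gamma$ using that $D^2_vL$ is invertible, and Theorem \ref{t:EL} provides the regularity), so for each $(x,v)\in TM$ there is a unique maximal integral curve $t\mapsto\phi^t(x,v)=(\gamma(t),\dot\gamma(t))$ defined on an open interval $(a_{(x,v)},b_{(x,v)})\ni 0$. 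Completeness is the assertion that $a_{(x,v)}=-\infty$ and $b_{(x,v)}=+\infty$ for every $(x,v)$; I will prove $b_{(x,v)}=+\infty$, the backward case being entirely parallel.

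Next I would record the only global inputs needed. By Theorem \ref{t:EL} the dual energy $\hat H$ is constant along every Euler--Lagrange trajectory: $\hat H(\gamma(t),\dot\gamma(t))\equiv E$ on the maximal interval. I would then upgrade this to a \emph{uniform} (in the base point) speed bound: for every $E\in\R$ there is $R=R(E)$ with $\hat H(x,v)=E\Rightarrow\|v\|_x\le R$. To see this, put $z=\partial_v L(x,v)$; by the Legendre equality \eqref{e: legendreequality} we have $(x,z)=\mathcal L(x,v)$, so $v$ is the maximiser in the duality \eqref{hypoL3} and hence $v=\partial_z H(x,z)$ (envelope theorem, $H\in C^1$). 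From \eqref{hypoH2} with $K=1$, $E=H(x,z)\ge\|z\|_{\ast,x}-C^*(1)$, whence $\|z\|_{\ast,x}\le E+C^*(1)=:R^*$. From convexity of $H(x,\cdot)$ on the fiber $T^*_xM$, for any covector $w$ with $\|w\|_{\ast,x}\le 1$ one has $\partial_z H(x,z)(w)\le H(x,z+w)-H(x,z)\le A^*(R^*+1)+C^*(0)$, using \eqref{hypoH3} and $H\ge-C^*(0)$ (i.e.\ \eqref{hypoH2} with $K=0$); taking the supremum over such $w$ gives $\|v\|_x=\|\partial_z H(x,z)\|_x\le A^*(R^*+1)+C^*(0)=:R(E)$.

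Now the non-escape step. Suppose $b:=b_{(x_0,v_0)}<\infty$, and set $E=\hat H(x_0,v_0)$, $R=R(E)$. Energy conservation together with the previous estimate gives $\|\dot\gamma(t)\|_{\gamma(t)}\le R$ for all $t\in[0,b)$, hence $d(\gamma(0),\gamma(t))\le\int_0^t\|\dot\gamma(s)\|_{\gamma(s)}\,ds\le Rb$, so $\gamma(t)$ stays in the closed metric ball $\overline B_d(\gamma(0),Rb)$, which is compact because $(M,d)$ is complete (Hopf--Rinow). Consequently the entire orbit remains in the compact subset $\mathcal K=\{(x,v)\in TM:\ d(x,\gamma(0))\le Rb,\ \|v\|_x\le R\}$ of $TM$. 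But a maximal integral curve of our ODE defined only on a finite interval $[0,b)$ must eventually leave every compact subset of $TM$ (otherwise a limit point $(p,w)\in TM$ along some $t_n\uparrow b$, combined with local existence and uniqueness of the flow through $(p,w)$, would extend the orbit past $b$). This contradiction gives $b=+\infty$; the identical argument on $(a_{(x_0,v_0)},0]$ gives $a_{(x_0,v_0)}=-\infty$, so $\phi^t$ is defined for all $t\in\R$.

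The step I expect to be the genuine obstacle is the uniform speed bound of the second paragraph. When $M$ is noncompact the energy levels $\hat H^{-1}(E)$ are not compact, so energy conservation by itself does not confine an orbit; it is exactly the \emph{uniform} superlinearity and boundedness \eqref{hypoH2}--\eqref{hypoH3} that force the fiberwise sublevel sets of $\hat H$ to be bounded uniformly in $x$, which is what rules out escape to spatial infinity in finite time. The remaining ingredients --- local existence and uniqueness, conservation of $\hat H$ (Theorem \ref{t:EL}), Hopf--Rinow, and the ODE escape lemma --- are standard.
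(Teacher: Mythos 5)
Your proof is correct, and it is precisely the classical non-escape argument that the cited reference (Fathi--Maderna, Cor.~2.2) uses; the paper itself gives no proof here, merely the citation. The key steps you identify — conservation of the dual energy along E--L trajectories, the uniform (base-point-independent) speed bound on energy shells obtained from \eqref{hypoH2}--\eqref{hypoH3} via the convexity inequality for $H(x,\cdot)$, compactness of closed metric balls from Hopf--Rinow, and the ODE escape lemma — are exactly the ingredients one needs, and you are right to flag that it is the \emph{uniformity} in \eqref{hypoH2}--\eqref{hypoH3} (the paper's strengthening of Fathi's local hypotheses, noted in the footnote after \eqref{hypoL4}--\eqref{hypoL5}) that rules out escape to spatial infinity when $M$ is noncompact.
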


In the case of a Tonelli Lagrangian, we can then define the Lagrangian exponential map 
\begin{equation}\label{eq:2.16}
\exp^{L}_x(\varepsilon;v)=\pi(\phi^{\varepsilon}(x,v)).
\end{equation}
Seen as a map $\exp^L_x(\varepsilon;\cdot):T_xM \to M$, it is surjective since the E-L flow is complete.

\subsection{Superdifferential calculus}
We now introduce the notion of super-differentiability that we shall use when defining viscosity solutions.

\begin{definition}
Fix $x\in M$ and let $f: M \to \R$ a be an upper semi-continuous function. The \emph{super-differential} of $f$ at $x$, denoted $\partial^{+} f(x)$, is the set of $p \in T_x^{*}M$ s.t. 
\begin{equation}\label{superdifferentiability}
 f(\exp_x(v)) -f(x)\leq p(v)+ o(d(x,\exp_x(v))),    
\end{equation}
holds for all $v\in T_xM$, where $\frac{o(\lambda )}{\lambda}$ tends to zero with $\lambda \to 0$ and depends only on $x$.
The \emph{sub-differential} of $f$ at $x$, denoted $\partial^{-} f(x)$, is the set of elements $p \in -\partial^{+}(-f)(x)$. 

\end{definition}
Observe that $p\in \partial^{+}f(x)\cap \partial^{-}f(x)$ iff $f$ is differentiable in the usual sense and, $p=d_xf$.\\

We also remark that if $f,g: M \to \R$, are two functions s.t. $\partial^{+}f(x),\partial^{+}g(x)$ are both not empty for some $x \in M$, then $\partial^{+}f(x)+ \partial^+g(x)\subseteq \partial^{+}(f+g)(x) $.

\begin{example}[Super-differential of a semiconcave function]
An important class of super-differentiable functions is made of $(\lambda,\omega)$- geodesically semiconcave functions, i.e. $f: M \to \R$ s.t. there exist $\lambda: M \times M \to \R$ continuous and $\omega: [0,\infty) \to [0,\infty)$, a modulus of continuity with the following property: 
\begin{equation*}
    f(\gamma(t))\geq tf(\gamma(1))+ (1-t)f(\gamma(0))+ t(1-t)\lambda(\gamma(0),\gamma(1))\omega(d(\gamma(0),\gamma(1))),
\end{equation*}
for all $\gamma: [0,1]\to M$ geodesic.
Equivalently, by the completeness of the metric structure, we can reformulate the previous condition in terms of the exponential map: for all $(x,v) \in TM$ we have
\begin{equation*}
    f(\text{exp}_x(tv))\geq tf(\text{exp}_x(v))+ (1-t)f(x)+ t(1-t)\lambda(x,\exp_x(v))\omega(d(x,\exp_x(v)).
\end{equation*}
Observe that for $(\lambda,\omega)$-semiconcave functions we have
\begin{equation*}\
 f(\exp_x(v)) -f(x)\leq p(v)+  \lambda(x,\exp_x(v))\omega(d(x,\exp_x(v)), \quad \forall v\in T_xM, \, \forall p \in \partial^{+}f(x).    
\end{equation*}
This is a consequence of the monotonicity of the different quotient: Fix $x\in M$, $v \in T_xM$, and $p \in \partial^{+}f(x)$. Then
\begin{align*}
   f(\exp_x(v)) -f(x) &\leq  \frac{f(\exp_x(tv)) -f(x)}{t}+ (1-t)\lambda(d(x,\exp_x(v))\omega(d(x,\exp_x(v))\\
   &\leq p(v)+(1-t)\lambda(x,\exp_x(v))\omega(d(x,\exp_x(v))+ o(t).
\end{align*}
Upon sending $t \to 0$ we get the claim. 
This class will be important later: it exhibits a uniform error in the super-diferentiability condition that is simpler to integrate in the non-compact setting. 
\end{example}
\begin{example}\label{ex: semiconcave}
We list some classical examples of $(\lambda,\omega)$- geodesically semiconcave functions.
\begin{enumerate}
    \item $M=\R^d$ and $g_x(\cdot,\cdot)=\big<\cdot\,,\cdot\big>$, the Euclidean scalar product. Then we have the following alternative for the cost $D(1,x,y)=\frac{|y-x|^{p}}{p}$(see \cite[Lem.~ 10.2.1]{AGS})
    \begin{enumerate}
        \item If $p\geq 2$, $\omega(s)=\frac{s^2}{2}$ and $\lambda(x,y)=(p-1)\max\{|x|,|y|\}^{p-2}$
        \item If $p\leq 2$,
        $\omega(s)=\frac{s^p}{p}$ and $\lambda(x,y)=\frac{p2^{2-p}}{p-1}$.
    \end{enumerate}
    \item If $L:TM \to \R$ is a weak Tonelli Lagrangian on a  compact manifold $M$, then $x\mapsto D(\varepsilon,x,x_0)$ is semiconcave for fixed $\varepsilon$. In addition, if $L$ is locally Lipschitz (for instance a Tonelli Lagrangian) then we can take $\omega(s)=s^2$ and $\lambda$ a constant function. See  \cite[Thm ~B.19]{Fathi2010}  
    \item  If $(M,g)$ has nonnegative sectional curvature, then $x \mapsto \frac{d^2(x,x_0)}{2}$, for a fixed $x_0 \in M$ is semiconcave with modulus $\omega(r)=\frac{r^2}{2}$ and $\lambda\equiv 1.$ See also  \cite[Ex.~ 10.22]{villani2008optimal} and later discussion.
\end{enumerate}
\end{example}

To lighten some notation, we also introduce the \textit{Lagrangian supergradient} $\partial^{L,+}f$ is defined via duality:
$$
\text{Graph}_{\partial^{L,+}f}=\mathcal{L}^{-1}(\text{Graph}_{\partial^{+}f}). 
$$

We now state the following result of super-differentiability of our penalization function.

\begin{proposition}\label{Differentiability Penalization}
Fix $\varepsilon>0$ and 
let $\gamma_{x \to y} \in \Gamma_{[0,\varepsilon]}(x,y)$ be a minimizer of \eqref{action}. Then 
\begin{equation}
    -\frac{\partial}{\partial v}L(x,\dot\gamma _{x \to y}(0)) \in \partial^{+}D(\varepsilon,\cdot,y)(x).
\end{equation}
In particular,
\begin{equation*}
    -\dot\gamma_{x \to y}(0)\in \partial^{L,+}_{x}D(\varepsilon,\cdot,y),
\end{equation*}
whenever $L$ is reversible.
\end{proposition}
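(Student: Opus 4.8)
The plan is to reduce the statement to a classical first-variation computation for the action functional, the only nonstandard requirement being that the computation be organised so as to produce an error term of the form $o(d(x,\exp_x v))$ as in \eqref{superdifferentiability}, with a modulus depending only on $x$ (here $\varepsilon$, $y$ and the chosen minimizer are regarded as fixed data). Write $\gamma:=\gamma_{x\to y}$; by Theorem~\ref{t:EL} this minimizer is $C^1$ and satisfies the Euler--Lagrange equation. I would fix once and for all a value $h\in(0,\varepsilon)$ small enough that $\gamma([0,h])$ lies inside a normal coordinate chart centred at $x$, and write $\bar x(\cdot)$ and $\bar x_0=\bar x(0)$ for the coordinates of $\gamma$ and of $x$; recall that in such a chart $\exp_x(v)$ has coordinates $\bar x_0+v$.

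For $v\in T_xM$ with $\|v\|$ small I would use as competitor the curve $\sigma\in\Gamma_{[0,\varepsilon]}(\exp_x(v),y)$ equal to $\gamma$ on $[h,\varepsilon]$ and, in the chart, equal to $\bar\sigma(t)=\bar x(t)+\phi(t)\,v$ on $[0,h]$, where $\phi\in C^1([0,h])$ satisfies $\phi(0)=1$ and $\phi(h)=0$ (for instance $\phi(t)=1-t/h$). Then $\sigma(0)=\exp_x(v)$ and $\sigma(h)=\gamma(h)$, so $\sigma$ is admissible, and since $\int_0^\varepsilon L(\gamma,\dot\gamma)\,dt=D(\varepsilon,x,y)$ we get
\[
D(\varepsilon,\exp_x(v),y)-D(\varepsilon,x,y)\ \le\ \int_0^h\big[\,L(\bar\sigma(t),\dot{\bar\sigma}(t))-L(\bar x(t),\dot{\bar x}(t))\,\big]\,dt .
\]

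On $[0,h]$ one has $\bar\sigma=\bar x+\phi v$ and $\dot{\bar\sigma}=\dot{\bar x}+\dot\phi\,v$, with $|\phi|\le 1$ and $|\dot\phi|$ bounded; since $L$ is $C^1$ (indeed $C^2$ in the Tonelli case), a Taylor expansion in the chart gives, uniformly for $t\in[0,h]$ and $\|v\|\le 1$,
\[
L(\bar\sigma,\dot{\bar\sigma})-L(\bar x,\dot{\bar x})=\phi(t)\,\frac{\partial L}{\partial x}(\bar x,\dot{\bar x})\cdot v+\dot\phi(t)\,\frac{\partial L}{\partial v}(\bar x,\dot{\bar x})\cdot v+o(\|v\|),
\]
so that, after integrating in $t$ and then integrating the second term by parts,
\[
\int_0^h\big[L(\bar\sigma,\dot{\bar\sigma})-L(\bar x,\dot{\bar x})\big]dt=\Big(\int_0^h\big[\phi\,\tfrac{\partial L}{\partial x}(\bar x,\dot{\bar x})+\dot\phi\,\tfrac{\partial L}{\partial v}(\bar x,\dot{\bar x})\big]dt\Big)\cdot v+o(\|v\|).
\]
Using $\phi(h)=0$, $\phi(0)=1$ and the Euler--Lagrange identity $\tfrac{d}{dt}\tfrac{\partial L}{\partial v}(\bar x,\dot{\bar x})=\tfrac{\partial L}{\partial x}(\bar x,\dot{\bar x})$ of Theorem~\ref{t:EL}, the boundary--term of the integration by parts of $\int_0^h\dot\phi\,\tfrac{\partial L}{\partial v}(\bar x,\dot{\bar x})\,dt$ is $-\tfrac{\partial L}{\partial v}(\bar x_0,\dot{\bar x}(0))$ and the two resulting interior integrals cancel, leaving
\[
\int_0^h\big[L(\bar\sigma,\dot{\bar\sigma})-L(\bar x,\dot{\bar x})\big]dt=-\frac{\partial L}{\partial v}(\bar x_0,\dot{\bar x}(0))\cdot v+o(\|v\|)=-\frac{\partial L}{\partial v}(x,\dot\gamma(0))(v)+o(\|v\|),
\]
the last equality because at the centre $x$ of a normal chart the coordinate pairing is the intrinsic one. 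Since $d(x,\exp_x v)=\|v\|$ for $\|v\|$ small, the two displays combine into exactly \eqref{superdifferentiability} with $p=-\frac{\partial L}{\partial v}(x,\dot\gamma(0))$, which is the first assertion. The ``in particular'' then follows from the definition of $\partial^{L,+}$: one must exhibit $u\in T_xM$ with $\frac{\partial L}{\partial v}(x,u)=-\frac{\partial L}{\partial v}(x,\dot\gamma(0))$, and when $L$ is reversible the map $v\mapsto\frac{\partial L}{\partial v}(x,v)$ is odd, so $u=-\dot\gamma(0)$ works.

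I expect the delicate point to be the uniformity of the remainder $o(\|v\|)$: it must be dominated by a single modulus depending only on $x$ (and the fixed data), which here works because $h$ is fixed and the set $\{(\bar\sigma(t),\dot{\bar\sigma}(t)):t\in[0,h],\ \|v\|\le 1\}$ is relatively compact, so the modulus of continuity of the differential of $L$ on it is a single function of $\|v\|$; a slicker but less elementary alternative is to run the same computation invariantly along $\gamma$, taking the perturbation vector field to be $\phi(t)$ times the parallel transport of $v$ along $\gamma$, at the cost of controlling the first variation of the exponential map. Two routine preliminaries feed into the argument: the existence of a minimizer (Theorem~\ref{t: existence}) and, for $\gamma\in C^1$ together with the Euler--Lagrange equation, the Tonelli structure underlying Theorem~\ref{t:EL}.
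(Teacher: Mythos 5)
Your proof is correct and is essentially the classical first-variation computation that underlies the reference the paper invokes: the paper's ``proof'' is just the observation that the extrinsic superdifferential of Definition 2.2 agrees with the test-function notion (Remark \ref{Intrinsic=Existrnsic}) followed by a citation of Fathi's Corollary B.20, whereas you supply the actual computation. The argument is sound: the competitor curve pasted inside a normal chart is admissible, the Taylor expansion with a uniform $o(\|v\|)$ remainder is justified by compactness of $\{(\bar\sigma(t),\dot{\bar\sigma}(t)):t\in[0,h],\ \|v\|\le1\}$, the integration by parts combined with the Euler--Lagrange equation from Theorem \ref{t:EL} produces exactly the boundary term $-\tfrac{\partial L}{\partial v}(x,\dot\gamma(0))(v)$, and the oddness of $\tfrac{\partial L}{\partial v}(x,\cdot)$ under reversibility gives the second assertion by the definition of $\partial^{L,+}$. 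One small caveat you already flag implicitly: the step ``$\gamma$ is $C^1$ and satisfies E--L'' requires $L$ to be (weak) Tonelli, which is what Theorem \ref{t:EL} actually assumes; the proposition as stated is implicitly under that standing hypothesis, and your proof uses it in exactly the places where it is needed.
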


\begin{proof}
As observed in Remark \ref{Intrinsic=Existrnsic}, the intrinsic and extrinsic approaches are equivalent.  
Hence, we refer to the proof based on the intrinsic approach in \cite[Cor.~ B.20]{Fathi2010} to establish the first assertion.  
The second assertion follows directly from the definition of a Lagrangian supergradient.
\end{proof}

 We conclude this section with the following result, which replaces the proof of the useful Lemma 3.1 in \cite{crandall1992usersguideviscositysolutions}, to obtain that our penalization shall indeed vanish when taking the correct limit.

\begin{lemma}\label{Convergence Maxim}
We have the following

\begin{enumerate}
\item The (decreasing) sequence of continuous functions $D(\varepsilon, \cdot,\cdot)$ is Gamma converging to the convex indicator function over the diagonal $\Delta \subset M \times M$.

\item Given $F: M \times M \to \R$  u.s.c. with compact superlevels

$$M_\varepsilon:=\max_{x,y \in M\times M} \Big\{F(x,y)-D(\varepsilon,x,y)\Big\} \  \quad\uparrow_{ \varepsilon \downarrow 0} \quad M_0=\max_{x\in M} F(x,x),$$

and there exists a subsequence of $(x_\varepsilon,y_\varepsilon)\in \mathrm{argmax}_{(x,y)\in M}\big\{F(x,y)-D(\varepsilon,x,y)\big\}$ that converges to a point of maximum for $M_0$.

\end{enumerate}
\end{lemma}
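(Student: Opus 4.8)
The plan is to treat the two assertions separately, since (1) is a $\Gamma$-convergence statement about the penalizations $D(\varepsilon,\cdot,\cdot)$ alone, while (2) is the standard consequence of $\Gamma$-convergence plus equi-coercivity adapted to the perturbation $F-D(\varepsilon,\cdot,\cdot)$. For part (1), recall $\Gamma$-convergence of a decreasing family amounts to checking: (liminf inequality) for every $(x_\varepsilon,y_\varepsilon)\to(x,y)$ one has $\liminf_\varepsilon D(\varepsilon,x_\varepsilon,y_\varepsilon)\ge \chi_\Delta(x,y)$, where $\chi_\Delta$ is $0$ on the diagonal and $+\infty$ off it; and (limsup / recovery sequence) for every $(x,y)$ there is $(x_\varepsilon,y_\varepsilon)\to(x,y)$ with $\limsup_\varepsilon D(\varepsilon,x_\varepsilon,y_\varepsilon)\le \chi_\Delta(x,y)$. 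The recovery sequence is immediate: for $(x,y)\in\Delta$ take the constant sequence $(x,x)$ and use $D(\varepsilon,x,x)\le \mathcal A_{[0,\varepsilon]}(\text{const curve at }x)=\int_0^\varepsilon L(x,0)\,dt=0$ by dissipativity; for $(x,y)\notin\Delta$ there is nothing to prove since $\chi_\Delta(x,y)=+\infty$. Moreover, since the family is decreasing in $\varepsilon$ (Lemma \ref{Decreasing}) and pointwise on $\Delta$ equals $0$ while off the diagonal $D(\varepsilon,x,y)\to+\infty$ (see below), $\Gamma$-convergence of a monotone decreasing family is just pointwise convergence of the lower semicontinuous envelopes, i.e. of $D_*(x,y):=\sup_\varepsilon\inf_{\varepsilon'\le\varepsilon}(\text{l.s.c.\ env.\ of }D(\varepsilon',\cdot,\cdot))$, to $\chi_\Delta$.

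The real content is the liminf inequality, and this is the step I expect to be the main obstacle: one must show that if $d(x_\varepsilon,y_\varepsilon)$ does not go to $0$ then $D(\varepsilon,x_\varepsilon,y_\varepsilon)\to+\infty$. Fix $\delta>0$ and suppose $d(x_\varepsilon,y_\varepsilon)\ge\delta$ along a subsequence. Let $\gamma_\varepsilon\in\Gamma_{[0,\varepsilon]}(x_\varepsilon,y_\varepsilon)$ be a minimizer (Theorem \ref{t: existence}). By the uniform superlinearity \eqref{hypoL4}, for every $K\ge 0$, $L(z,v)\ge K\|v\|_z-C(K)$, so
\begin{equation*}
D(\varepsilon,x_\varepsilon,y_\varepsilon)=\int_0^\varepsilon L(\gamma_\varepsilon,\dot\gamma_\varepsilon)\,dt\ge K\int_0^\varepsilon\|\dot\gamma_\varepsilon(t)\|_{\gamma_\varepsilon(t)}\,dt-C(K)\varepsilon\ge K\,d(x_\varepsilon,y_\varepsilon)-C(K)\varepsilon\ge K\delta-C(K)\varepsilon,
\end{equation*}
using that the length of any curve joining $x_\varepsilon$ to $y_\varepsilon$ dominates $d(x_\varepsilon,y_\varepsilon)$. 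Letting $\varepsilon\to 0$ gives $\liminf_\varepsilon D(\varepsilon,x_\varepsilon,y_\varepsilon)\ge K\delta$, and since $K$ is arbitrary the liminf is $+\infty$; combined with non-negativity (Lemma \ref{Decreasing}) and the recovery sequence this proves (1). Note this also records the pointwise blow-up off the diagonal used above.

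For part (2), I would invoke the fundamental theorem of $\Gamma$-convergence: if $G_\varepsilon\xrightarrow{\Gamma}G$ and the family $\{G_\varepsilon\}$ is equi-coercive, then $\min G_\varepsilon\to\min G$ and any sequence of (almost-)minimizers is precompact with all limit points minimizing $G$. Here we apply it to $G_\varepsilon:=D(\varepsilon,\cdot,\cdot)-F$, which $\Gamma$-converges to $\chi_\Delta-F$ by part (1) together with the fact that adding a continuous (here u.s.c., but $-F$ is l.s.c.) function is compatible with $\Gamma$-convergence — more carefully, since $F$ is u.s.c.\ with compact superlevels, $-F$ is l.s.c.\ and coercive, and $\Gamma$-convergence is stable under addition of a continuous function; one handles the merely l.s.c.\ $-F$ by noting the recovery-sequence side only needs upper semicontinuity of $F$ (to pass to the limsup of $F(x_\varepsilon,y_\varepsilon)\le$ along the constant recovery sequence it is even an equality) and the liminf side only needs lower semicontinuity of $-F$. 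Equi-coercivity holds because $D(\varepsilon,x,y)-F(x,y)\le D(\varepsilon_0,x,y)-F(x,y)$ for $\varepsilon\le\varepsilon_0$ by monotonicity, and the right-hand side has compact sublevels (it is l.s.c.\ with $D(\varepsilon_0,\cdot,\cdot)\ge 0$ and $F$ has compact superlevels). Minimizing $-G_\varepsilon$ is maximizing $F-D(\varepsilon,\cdot,\cdot)$, so $M_\varepsilon=-\min G_\varepsilon\to-\min(\chi_\Delta-F)=\max_{x\in M}F(x,x)=M_0$; the monotonicity $M_\varepsilon\uparrow$ as $\varepsilon\downarrow 0$ is immediate from $D(\varepsilon,\cdot,\cdot)\ge D(\tau\varepsilon,\cdot,\cdot)$ for $\tau>1$ (Lemma \ref{Decreasing}). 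Finally, equi-coercivity gives precompactness of any selection $(x_\varepsilon,y_\varepsilon)\in\operatorname{argmax}$, and the standard $\Gamma$-convergence argument shows every limit point $(\bar x,\bar y)$ satisfies $\chi_\Delta(\bar x,\bar y)-F(\bar x,\bar y)=\min(\chi_\Delta-F)$, forcing $\bar x=\bar y$ and $F(\bar x,\bar x)=M_0$, which is the asserted convergence to a maximizer of $M_0$.
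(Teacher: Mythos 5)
Your proof follows essentially the same route as the paper's: lower bound $D(\varepsilon,x,y)$ via the uniform superlinearity \eqref{hypoL4} to get $D(\varepsilon,x,y)\ge Kd(x,y)-\varepsilon C(K)$, deduce pointwise convergence to $\chi_\Delta$, use monotonicity of $D(\varepsilon,\cdot,\cdot)$ in $\varepsilon$ together with the fact that the $\Gamma$-limit of a monotone family is the l.s.c.\ envelope of the pointwise limit, and then conclude part (2) by the fundamental theorem of $\Gamma$-convergence. So the argument is correct in structure and matches the paper.

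One step as written is wrong, however. You claim equi-coercivity from
$D(\varepsilon,x,y)-F(x,y)\le D(\varepsilon_0,x,y)-F(x,y)$ for $\varepsilon\le\varepsilon_0$ ``by monotonicity,'' but Lemma~\ref{Decreasing} says $D(\varepsilon,x,y)>D(\tau\varepsilon,x,y)$ for $\tau>1$, i.e.\ $D$ is \emph{increasing} as $\varepsilon\downarrow 0$, so the inequality should be $\ge$, not $\le$. Worse, even with the sign you wrote the reasoning would not give equi-coercivity: bounding $G_\varepsilon$ from \emph{above} by a coercive function only enlarges its sublevel sets, and hence says nothing about their compactness. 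The correct (and simpler) argument is the one implicit in your parenthetical: since $D(\varepsilon,\cdot,\cdot)\ge 0$ for all $\varepsilon$, one has $G_\varepsilon:=D(\varepsilon,\cdot,\cdot)-F\ge -F$, so $\{G_\varepsilon\le c\}\subseteq\{-F\le c\}=\{F\ge -c\}$, which is compact by hypothesis and independent of $\varepsilon$; monotonicity of $D$ in $\varepsilon$ is not needed at all for equi-coercivity. With this repair the proof is complete.
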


\begin{proof}
Fix $t> 0$ and $x,y \in M$ .
By\eqref{hypoL4}, we have, $\forall K\geq 0$
\begin{align*}
    D(t,x,y)&=\inf_{\gamma \in \Gamma_{[0,t]}} \int_0^tL(\gamma(s),\dot \gamma(s))ds\\
    &\geq K \inf_{\gamma \in {\Gamma_{[0,t]}}} \int_0^t \|\dot \gamma(s)\|_{\gamma(s)}ds - t C(K).
\end{align*}
Then we choose $\gamma :[0,t] \to M$ geodesic connecting $x$ and $y$ in time $t$. With this choice of $\gamma$,  $ \|\dot \gamma(s)\|_{\gamma(s)}=\frac{d(x,y)}{t}$. 

Therefore
\begin{equation*}
    C(K)t+ D(t,x,y)\geq K d(x,y).
\end{equation*}
The arbitrariness of $K$ implies that the sequence $D_t$ is pointwise converging to $1_{\Delta}.$ 
Due to the monotonicity stated in Proposition \ref{Decreasing}, the family of continuous functions $D_t$
is increasing as $t \downarrow 0$. By \cite[Rmk~2.12]{bra06}, the $\text{Gamma}$-limit of $(D_t)_{t>0}$ as $t \to 0$ coincides with the lower semicontinuous envelope of the pointwise limit, namely $1_\Delta$, which is already lower semicontinuous. This proves the first point. 

In particular, it follows that the functionals 
\[
F_t := -F +D_t
\longmapsto_{t \to 0} 
F_0 := -F +1_\Delta,
\] 
in the sense of Gamma convergence.

By the Fundamental Theorem of Gamma convergence \cite[~Thm. 2.10]{bra06}, $M_t \to M_0$, and every sequence of minimizers $(x_t,y_t) \in \operatorname{argmin}_{(x,y)\in M\times M} F_t(x,y)$ admits accumulation points belonging to 
$$\operatorname{argmin}_{(x,y)\in M\times M} F_0(x,y) =\operatorname{argmin}_{z \in M} -F(z,z)
.$$
\end{proof}

\section{The case of equations in Riemannian manifold}\label{s: Finite Hamilton Jacobi}

In this section, we consider the Hamilton–Jacobi equation
\begin{equation} \label{eq:HJB1} 
    u(x)+H(x,d_xu)=F(x), \quad x \in M,
\end{equation}
where $H :T^*M \to \R$ is the Hamiltonian and $F: M \to \R$.  

The purpose of this section is to give two proofs of comparison of sub/super solutions to \eqref{eq:HJB1}. One under the mild assumption that $H$ is locally Lipschitz, and one in a more geometric framework, namely when $H$ is what we called a geometric Hamiltonian. We start by recalling the notion of viscosity sub/super-solution, and then prove successively the two results.

\subsection{Viscosity solutions}
We use the following. 
\begin{definition}
We say that an upper-semicontinuous function $u : M \rightarrow \R$ is a \textit{viscosity sub-solution} of \eqref{eq:HJB1} 
	if, for all $x \in M$ and $p \in \partial^+ u(x)$, we have
\begin{align*}
	u(x)+{H(x,p)} \leq F(x).
\end{align*}
We say that $u :M \rightarrow \R$ lower-semicontinuous is a \textit{viscosity super-solution} of \eqref{eq:HJB1} if, for all $x \in  M$ and $p \in \partial^- u(x)$, we have
\begin{align*}
	u(x) + {H(x,p)} \geq F(x).
\end{align*} We say that $u$ is a \textit{viscosity solution} if it is both a sub- and super- solution.
\end{definition}

\begin{remark}\label{Intrinsic=Existrnsic}
The previous definition is equivalent to the intrinsic (test function) formulation, see Definition A.2 in \cite{Fathi2010}. 
Indeed, given any \(p \in \partial^+_x f\), there exists a function \(\phi \in C^1_c(M)\) such that \(p = d_x \phi\) and \(f - \phi\) attains a local maximum at \(x\). To construct such a \(\phi\), let \(U \subset M\) be a coordinate chart compatible with \(\exp_x\), i.e. $\exp^{-1}_{x}: U \to T_xM$ is a diffeomorphism with the image. Then, by Proposition 3.17 in \cite{cannarsa.sinestrari:04:semiconcave}, we can find a \(C^1\) function \(\tilde \phi: \exp^{-1}_x(U) \to \mathbb{R}\) such that \(d_o \tilde \phi =  p\) and \(f\circ \exp_{x}^{-1} - \tilde \phi\) has a local maximum at the origin. Therefore, we set $\phi =\tilde \phi\circ \exp^{-1}_x: U \to \R$ and observe $d_{x}\phi=d_{o}\exp_x^{-1}d_{o}\tilde{\phi}=d_{o}\tilde \phi=p$. Then, we multiply \(\phi\) by a smooth cutoff to obtain a compactly supported \(C^1\) extension of \(\phi\) to all of \(M\). 

\end{remark}
\subsection{Result for non-convex Hamiltonians}

The following assumption is a local Lipschitz continuity assumption for the Hamiltonian $H \circ J_p: TM \to \R$, measured with respect to the Sasaki distance.

\begin{assumption}\label{assumption: Hamiltonian}
For some $p>1$ we have that
\begin{equation}\label{continuityHamiltonian}
   |{H}(x,J_{p}(x,v)) - {H}(y,J_{p}(y,w))| \leq C\big(1 + \|v\|^{p-1}_x + \|w\|^{p-1}_y \big) D_S((x,v), (y,w))
\end{equation}
holds $ \forall (x,v),(y,w) \in TM.$

If either the manifold $M$ is not compact or $p\not=2$ we also prescribe that:
 \begin{equation*}
   |{H}(x,J_{p}(x,v)+J_p(x,w)) - {H}(x,J_{p}(x,w))| \leq C (1+ \|w\|_x+ \|v\|_{x})\|v\|^{p-1}_x \quad \forall (x,v),(x,w)\in TM.  
 \end{equation*}

\end{assumption}

\begin{example}[Mechanical Hamiltonian]
Fix $q>1$. The Hamiltonian $H(x,z)=\frac{\|z\|^{q}_{x*}}{q}$ satisfies Assumption \ref{assumption: Hamiltonian} with $p=q*$. Indeed, we have $H(x,J_{p}(x,v))=\frac{1}{q}\|v\|^p_x$, and 
\begin{enumerate}
    \item for every $(x,v),(y,w)\in TM$
    \begin{align}H(x,J_{p}(x,v))-H(y,J_{p}(y,w))&=\frac{1}{q}(\|v\|^p_x-\|w\|^{p}_y)=\frac{1}{q}(\|v\|^{p-1}_x+\|w\|^{p-1}_y)(\|v\|_x-\|w\|_y)\\
    &\leq \frac{2}{q}(\|v\|^{p-1}_x+\|w\|^{p-1}_y)D_S((x,v),(y,w)),
    \end{align}
    \item for every $(x,v),(x,w)\in TM$ 
    \begin{align*}
    H(x,J_{p}(x,v)+J_{p}(x,w))-H(x,J_{p}(x,v))\leq &\frac{1}{q}(\|J_{p}(x,v)+J_{p}(x,w))\|^{q-1}_{*x}+\|J_{p}(x,v)\|^{q-1}_{*x})\\
    &(\|J_{p}(x,v)+J_{p}(x,w))\|_{*x}-\|J_{p}(x,v)\|_{y*})\\  
    &\leq \frac{1}{q}(\|J_{p}(x,v)\|^{q-1}_{*x}+\|J_{p}(x,v)\|^{q-1}_{*x})\|J_{p}(x,w))\|_{*x}\\
    &=\frac{1}{q} (\|v\|_{x}+\|w\|_{x})\|w\|_{x}^{p-1}.
    \end{align*}
\end{enumerate}
\end{example}

\begin{remark}
We observe that when $M = \R^d$, $g=g_{\text{Eucl}}$, and $p = 2$, the map $J_2$ reduces to the identity, and the parallel transport acts trivially as the identity as well. In this case, the condition simply recovers the classical Lipschitz regularity required for the comparison principle; see \cite{crandall1992usersguideviscositysolutions}. Moreover we stress the fact that for $p=2$ the duality map is linear and in this case $J_2(x,v+w)=J_2(x,v)+J_2(x,w)$ and therefore the condition \eqref{continuityHamiltonian} is sufficient to describe both properties required in Assumption \eqref{assumption: Hamiltonian}.
\end{remark}

The following comparison result holds.
\begin{theorem}\label{p: comparisongeneral}
Let $H:T^*M \to \R$ which satisfies Assumption \ref{assumption: Hamiltonian} for some fixed $p>1$. 
Let $F_0,F_1:M \to \R$ be two proper functions such that: $F_0$ is upper semicontinuous, $F_1$ is lower semicontinuous.
Let $u_0,u_1: M \to \R$ be, respectively, a sub-solution, with at most $p$-growth, and a super-solution, with at most $p$-growth,  of
\[
w(x) + H(x,d_x w) = F_i(x), \qquad x\in M.
\]
Then
\[
\sup u_0-u_1 \leq\;\sup F_0-F_1.
\]
\end{theorem}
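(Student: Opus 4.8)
The plan is to run the classical doubling-of-variables argument, but using the action functional $D(\varepsilon,\cdot,\cdot)$ from \eqref{action} (with $L(x,v)=\tfrac{1}{p}\|v\|_x^p$, so that $D(\varepsilon,x,y)=\tfrac{d^p(x,y)}{p\varepsilon^{p-1}}$ by Example \ref{Lagrangian&Geo}) as the penalization, and exploiting the global superdifferentiability furnished by Proposition \ref{Differentiability Penalization} instead of localizing in a chart. Fix $\varepsilon>0$ and set
\[
\Phi_\varepsilon(x,y) := u_0(x) - u_1(y) - D(\varepsilon,x,y).
\]
First I would argue that $\Phi_\varepsilon$ attains a maximum at some $(x_\varepsilon,y_\varepsilon)$: the $p$-growth bounds on $u_0,u_1$ together with the coercivity $D(\varepsilon,x,y)\geq \tfrac{1}{p\varepsilon^{p-1}}d^p(x,y)$ and the completeness of $(M,d)$ force the superlevel sets of $\Phi_\varepsilon$ to be compact. (If one only has $\sup u_0<\infty$, $-\inf u_1<\infty$ is not enough and the growth hypothesis is exactly what is needed here; alternatively one applies the Ekeland-type/Lemma \ref{Convergence Maxim} machinery with $F(x,y)=u_0(x)-u_1(y)$ once upper semicontinuity is arranged.)

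Next, at the maximum point let $\gamma:[0,\varepsilon]\to M$ be a minimizing curve for $D(\varepsilon,x_\varepsilon,y_\varepsilon)$ — a reparametrized minimizing geodesic by Example \ref{Lagrangian&Geo} and Theorem \ref{t: existence}. By Proposition \ref{Differentiability Penalization},
\[
p_\varepsilon := -\tfrac{\partial L}{\partial v}(x_\varepsilon,\dot\gamma(0)) = -J_p(x_\varepsilon,\dot\gamma(0)) \in \partial^+ D(\varepsilon,\cdot,y_\varepsilon)(x_\varepsilon),
\]
and symmetrically, reversing the curve, $q_\varepsilon := J_p(y_\varepsilon,-\dot\gamma(\varepsilon)) \in \partial^-\big(-D(\varepsilon,x_\varepsilon,\cdot)\big)$-type statement, i.e. the appropriate element lies in $\partial^- u_1(y_\varepsilon)$ after transferring through the maximality of $\Phi_\varepsilon$. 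Concretely: since $x\mapsto u_0(x) - D(\varepsilon,x,y_\varepsilon)$ has a max at $x_\varepsilon$, we get $J_p(x_\varepsilon,\dot\gamma(0))\in\partial^+u_0(x_\varepsilon)$ (sum rule for superdifferentials, as recalled after the definition of $\partial^+$); and since $y\mapsto -u_1(y) - D(\varepsilon,x_\varepsilon,y)$ has a max at $y_\varepsilon$, the analogue of Proposition \ref{Differentiability Penalization} for the second variable — which follows by the same proof applied to the time-reversed minimizer, using that $L$ here is reversible — yields $J_p(y_\varepsilon,\dot\gamma(\varepsilon))\in\partial^-u_1(y_\varepsilon)$. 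Feeding these into the sub/super-solution inequalities:
\[
u_0(x_\varepsilon) + H\big(x_\varepsilon, J_p(x_\varepsilon,\dot\gamma(0))\big) \leq F_0(x_\varepsilon), \qquad u_1(y_\varepsilon) + H\big(y_\varepsilon, J_p(y_\varepsilon,\dot\gamma(\varepsilon))\big) \geq F_1(y_\varepsilon).
\]
Subtracting and using $\Phi_\varepsilon(x_\varepsilon,y_\varepsilon)\geq\Phi_\varepsilon(z,z)=u_0(z)-u_1(z)$ for all $z$:
\[
\sup_z\big(u_0(z)-u_1(z)\big) \leq \big(F_0(x_\varepsilon)-F_1(y_\varepsilon)\big) + H\big(y_\varepsilon,J_p(y_\varepsilon,\dot\gamma(\varepsilon))\big) - H\big(x_\varepsilon,J_p(x_\varepsilon,\dot\gamma(0))\big).
\]

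The main obstacle — and the point of the paper — is controlling the Hamiltonian difference $H(y_\varepsilon,J_p(y_\varepsilon,\dot\gamma(\varepsilon))) - H(x_\varepsilon,J_p(x_\varepsilon,\dot\gamma(0)))$. Here Assumption \ref{assumption: Hamiltonian} enters through the Sasaki distance: the bound is $\leq C\big(1+\|\dot\gamma(0)\|_{x_\varepsilon}^{p-1}+\|\dot\gamma(\varepsilon)\|_{y_\varepsilon}^{p-1}\big)\,D_S\big((x_\varepsilon,\dot\gamma(0)),(y_\varepsilon,\dot\gamma(\varepsilon))\big)$. Since $\gamma$ is a constant-speed minimizing geodesic, $\|\dot\gamma(0)\|_{x_\varepsilon}=\|\dot\gamma(\varepsilon)\|_{y_\varepsilon}=d(x_\varepsilon,y_\varepsilon)/\varepsilon$, and the equality case of \eqref{e: horizontal} gives $D_S\big((x_\varepsilon,\dot\gamma(0)),(y_\varepsilon,\dot\gamma(\varepsilon))\big)=d(x_\varepsilon,y_\varepsilon)$. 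Hence the Hamiltonian difference is bounded by $C\big(1 + 2\varepsilon^{-(p-1)}d^p(x_\varepsilon,y_\varepsilon)\big)\,d(x_\varepsilon,y_\varepsilon)$. The remaining work is to show $d(x_\varepsilon,y_\varepsilon)\to 0$ and $\varepsilon^{-(p-1)}d^p(x_\varepsilon,y_\varepsilon)\cdot d(x_\varepsilon,y_\varepsilon)\to 0$ as $\varepsilon\to 0$: the first follows from Lemma \ref{Convergence Maxim}(2) (accumulation points of maximizers lie on the diagonal, using the $p$-growth to get compact superlevels of $F(x,y)=u_0(x)-u_1(y)$); for the second, comparing $\Phi_\varepsilon(x_\varepsilon,y_\varepsilon)\geq\Phi_\varepsilon(x_\varepsilon,x_\varepsilon) = u_0(x_\varepsilon)-u_1(x_\varepsilon)$ shows $D(\varepsilon,x_\varepsilon,y_\varepsilon)=\varepsilon^{1-p}d^p(x_\varepsilon,y_\varepsilon)/p \leq u_1(x_\varepsilon)-u_1(y_\varepsilon)\to 0$, and combined with $d(x_\varepsilon,y_\varepsilon)\to 0$ this kills the product. (The auxiliary clause of Assumption \ref{assumption: Hamiltonian}, about $H(x,J_p(x,v)+J_p(x,w))$, is needed only when $p\neq 2$ or $M$ is noncompact, to handle the nonlinearity of $J_p$ when one must compare superdifferentials at nearby-but-distinct cotangent fibers or add an auxiliary perturbation; I would invoke it at the step where the sum rule for $\partial^+$ interacts with the non-additivity of $J_p$, or equivalently when regularizing to ensure the max is attained.) Letting $\varepsilon\to 0$ gives $\sup(u_0-u_1)\leq\sup(F_0-F_1)$, as claimed.
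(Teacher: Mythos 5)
Your approach matches the paper's: penalize with $D(\varepsilon,x,y)=d^p(x,y)/(p\varepsilon^{p-1})$, use Proposition~\ref{Differentiability Penalization} to produce elements of $\partial^+u_0(x_\varepsilon)$ and $\partial^-u_1(y_\varepsilon)$ at the doubled maximum, feed these into the viscosity inequalities, bound the Hamiltonian difference via Assumption~\ref{assumption: Hamiltonian} together with the Sasaki-distance estimate and \eqref{PTgeodesic}, and let the penalization vanish via Lemma~\ref{Convergence Maxim}. Two slips, though.

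First, a sign error. From the maximality of $x\mapsto u_0(x)-D(\varepsilon,x,y_\varepsilon)$ and the fact that $-J_p(x_\varepsilon,\dot\gamma(0))\in\partial^+D(\varepsilon,\cdot,y_\varepsilon)(x_\varepsilon)$, one gets $-J_p(x_\varepsilon,\dot\gamma(0))\in\partial^+u_0(x_\varepsilon)$ (check: $u_0(\exp_{x_\varepsilon}v)-u_0(x_\varepsilon)\le D(\varepsilon,\exp_{x_\varepsilon}v,y_\varepsilon)-D(\varepsilon,x_\varepsilon,y_\varepsilon)\le -J_p(x_\varepsilon,\dot\gamma(0))(v)+o$; in $\R^d$ with $p=2$ this is the familiar $(x_\varepsilon-y_\varepsilon)/\varepsilon$). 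You correctly write $p_\varepsilon=-J_p(x_\varepsilon,\dot\gamma(0))$ and $q_\varepsilon=J_p(y_\varepsilon,-\dot\gamma(\varepsilon))=-J_p(y_\varepsilon,\dot\gamma(\varepsilon))$ at first, but then in the ``Concretely'' sentence and in the displayed sub/super-solution inequalities you drop the minus and plug $+J_p(x_\varepsilon,\dot\gamma(0))$ and $+J_p(y_\varepsilon,\dot\gamma(\varepsilon))$ into $H$. Those are not the elements you just produced, so those two displayed inequalities are not justified. It happens not to damage the final estimate because $D_S((x,-v),(y,-w))=D_S((x,v),(y,w))$ and hence Assumption~\ref{assumption: Hamiltonian} gives the same bound either way, but as written the argument is inconsistent and should be fixed by keeping $-J_p$ throughout (which is what the paper does). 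Relatedly, with the $[0,\varepsilon]$-parametrization of $\gamma$ the equality clause of \eqref{e: horizontal} does not apply (that is for unit-time geodesics); what you actually want is the inequality from item (3) of the Sasaki properties, which, combined with \eqref{PTgeodesic}, gives $D_S((x_\varepsilon,\dot\gamma(0)),(y_\varepsilon,\dot\gamma(\varepsilon)))\le d(x_\varepsilon,y_\varepsilon)$ — an upper bound, which is all you need.

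Second, the claim that ``$p$-growth bounds on $u_0,u_1$ together with the coercivity $D(\varepsilon,x,y)\ge d^p(x,y)/(p\varepsilon^{p-1})$ force the superlevel sets of $\Phi_\varepsilon$ to be compact'' is false when $M$ is noncompact. The penalization is coercive in $d(x,y)$ but provides no control on $d(x,x_0)$; along the diagonal $x=y\to\infty$ the function $u_0(x)-u_1(x)$ can still grow like $d^p(x,x_0)$ and $\Phi_\varepsilon$ need not attain a maximum. This is precisely why the paper proves the result in detail only for compact $M$ and then sketches the noncompact case by adding the localizing perturbation $\delta\,d^p(\cdot,x_0)$ — this is where the second clause of Assumption~\ref{assumption: Hamiltonian} is needed (to absorb the extra term $\delta\,\partial^+d^p(\cdot,x_0)$ when $J_p$ is not additive, i.e.\ $p\ne 2$). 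You do mention the localization mechanism in your closing parenthetical, so the missing ingredient is present, but the opening paragraph overstates what the growth hypothesis buys you.
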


\begin{proof}
For the sake of presentation we concentrate the proof in the compact case. See the Remark \ref{general growth} on how treat the general situation.
Fix $x_0 \in M$, and for $\varepsilon,\delta>0$ set 
$$\Phi^{\varepsilon}(x,y):=u_0(x)-u_1(y)-\frac{d^p(x,y)}{p\varepsilon^{p-1}}$$
Let $(x_\varepsilon,y_\varepsilon) \subset M\times M$ be a sequence of maximum points for $\Phi^{\varepsilon}$. Such a sequence exists thanks to the boundedness assumptions on the functions, the u.s.c. of $\Phi^{\varepsilon}$ and the compactness of $M$. Then, one can show (This is a standard procedure, see \cite[Prop.~ 3.7]{crandall1992usersguideviscositysolutions}) that
\begin{itemize}
    \item The penalization $\frac{d^p(x_\varepsilon,y_\varepsilon)}{p\varepsilon^{p-1}}$ vanishes as $\varepsilon\to 0$,
     \item The sequence $(x_\varepsilon,y_\varepsilon)$ is s.t. $\Phi^{\varepsilon}(x_\varepsilon,y_{\varepsilon}) \to \max_{M}\big\{u_0-u_1\big\},$ and $(x_{\varepsilon},y_{\varepsilon})\to (\bar x,\bar x) \in M \times M$ maximum point of $u_0-u_1$.
\end{itemize}
Moreover, by the maximality of $(x_\epsilon,y_\epsilon)$, we have
\begin{equation}
    \begin{cases}
        &-J_p({x_\varepsilon},\dot \gamma_{\varepsilon}(0))\in d_{x_\varepsilon}^{+}u_0\\
        &-J_p({y_{\varepsilon}},\dot\gamma_{\varepsilon}(\varepsilon)) \in d^{-}_{y_\varepsilon}u_1,
    \end{cases}
\end{equation}
where $\gamma_{\varepsilon}:[0,\varepsilon]\to M$ is a minimizing geodesic connecting $x_{\varepsilon}$ and $y_{\varepsilon}$ in time $\varepsilon$.
We note $\|\dot\gamma_{\varepsilon}(0)\|_{x_{\varepsilon}}=\|\dot\gamma_{\varepsilon}(\varepsilon)\|_{y_{\varepsilon}}=\frac{d(x_{\varepsilon},y_{\varepsilon})}{\varepsilon},\quad  \forall \varepsilon>0;$ 
The definition of viscosity sub and super solution gives
\begin{align*}
u_0(x_{\varepsilon}) - u_1(y_{\varepsilon}) 
&\leq -H\! \left(x_{\varepsilon},-J_p({x_\varepsilon},\gamma_{\varepsilon}(0))\right)
   + H\!\left(y_{\varepsilon},-J_p({y_{\varepsilon}},\dot\gamma_{\varepsilon}(\varepsilon)) \right)
   + F_{0}(x_{\varepsilon}) - F_{1}(y_{\varepsilon}) \\[6pt]
&\leq C\Big(1 + \|\dot\gamma_{\varepsilon}(0))\|_{x_{\varepsilon}}^{p-1} 
               + \|\dot\gamma_{\varepsilon}(\varepsilon))\|_{y_{\varepsilon}}^{p-1}\Big)D_S((x_{\varepsilon},\dot \gamma_{\varepsilon(0)}), (y_{\varepsilon},\dot \gamma_{\varepsilon(\varepsilon)})) + F_{0}(x_{\varepsilon}) - F_{1}(y_{\varepsilon})\\         
&\leq C\Big(1 + \|\dot\gamma_{\varepsilon}(0))\|_{x_{\varepsilon}}^{p-1} 
               + \|\dot \gamma_{\varepsilon}(\varepsilon))\|_{y_{\varepsilon}}^{p-1}\Big)\\
&\qquad\cdot\Big( d(x_{\varepsilon},y_{\varepsilon}) 
     + \|\dot\gamma_{\varepsilon}(0) - \mathcal{T}_{y_{\varepsilon}}^{x_{\varepsilon}}[\gamma_{\varepsilon}](\dot\gamma_{\varepsilon}(\varepsilon))\|_{x_{\varepsilon}}\Big) 
     + F_{0}(x_{\varepsilon}) - F_{1}(y_{\varepsilon}) \\[6pt]
&\underbrace{\leq}_{\eqref{PTgeodesic}, \eqref{e: isometrydual}} C\left(1 + \|\dot \gamma_{\varepsilon}(0)\|^{p-1}_{x_\varepsilon}+\|\dot \gamma_{\varepsilon}(\varepsilon)\|^{p-1}_{y_\varepsilon}\right) d(x_{\varepsilon},y_{\varepsilon})
     + F_0(x_{\varepsilon}) - F_{1}(y_{\varepsilon}) \\[6pt]
&\leq C\left(d(x_{\varepsilon},y_{\varepsilon}) 
     + \frac{d^p(x_{\varepsilon},y_{\varepsilon})}{\varepsilon^{p-1}}\right) 
     + F_0(x_{\varepsilon}) - F_{1}(y_{\varepsilon})\\
\end{align*}

We now use $\Phi^{\varepsilon}(x_{\varepsilon},y_{\varepsilon})\geq \Phi^{\varepsilon}(x,x)$ for every $x\in M$. In particular, for all $x \in M$ 
\begin{equation}
u_0(x)-u_1(x)\leq u_0(x_{\varepsilon})-u_1(y_{\varepsilon})-\frac{d^p(x_\varepsilon,y_\varepsilon)}{p\varepsilon^{p-1}}.
\end{equation}
Using the bound that we found for $u(x_\varepsilon)-u(y_{\varepsilon})$ we have 
\begin{align}\label{e:limitcomparison}
    u_0(x)-u_1(x)&\leq  C\left(d(x_{\varepsilon},y_{\varepsilon}) + \frac{d^p(x_{\varepsilon},y_{\varepsilon})}{\varepsilon^{p-1}}\right) 
      + \frac{d^p(x_\varepsilon,y_\varepsilon)}{p\varepsilon^{p-1}} + ( F_0(x_{\varepsilon})- F_{1}(y_{\varepsilon})).
\end{align}
Now, by regularity
$$\limsup_{\varepsilon \to 0} \big\{F_0(x_{\varepsilon})- F_{1}(y_{\varepsilon})\big\}\leq F_0(\bar x)- F_1(\bar x)\leq \max_{M} F_0-F_1, \quad \forall x \in M.$$

Therefore passing to the limsup in \eqref{e:limitcomparison}, and recalling the vanishing behavior of the penalizations,
$$
u_0(x)-u_1(x)\leq \max_{M} F_0-F_1, \quad \forall x \in M.
$$
The claim.\\

When the manifold $M$ is not compact, the same strategy can be carried out following the strategy of \cite{ishii1984uniqueness} by adding localizing perturbations, and we only sketch the proof here. A possible choice for the perturbation, for $\delta>0$, is 
\[
m_{\delta,x_0}(x) = \delta\, d^{p}(x,x_0),
\]
where  $x_0\in M$ is a given point. Note that this perturbation is super-differentiable, and not smooth in general; moreover, its super-differential can be explicitly characterized by Proposition~\ref{superdifferentiability}. Using Assumption~\ref{assumption: Hamiltonian}, one can then easily conclude the argument.
\end{proof}

\subsection{The case of a more geometric Hamiltonian}
We now employ the announced strategy of proof in the case of a dissipative and reversible geometric Hamiltonian.

\begin{proposition}\label{p: comparison} Let $L:TM \to \R$ be a weak, reversible, and dissipative Lagrangian. Let $F_{i}: M \to \R, i=0,1$ be two proper function: $F_0$ bounded from above and l.s.c.; $F_1$ bounded from below u.s.c.
Let $u_i: [0,T]\times M \to \R$, $i=0,1$ be bounded from above subsolution and bounded from below supersolution, respectively, of the following
\begin{equation*}
    w(x)+H(x,d_xw)=F_{i}(x), \quad  x \in M
\end{equation*}
with $H(x,p)=\sup_{v\in T_xM}p(v)-L(x,v)$.
In addition, assume $u_0$ and $u_1$ to have compact sub- and super-levels, respectively.
Then,
$$\sup_{ M}u_0-u_1\leq \sup_{ M} F_0-F_{1}.$$
\end{proposition}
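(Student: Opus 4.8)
The plan is to run the doubling-of-variables argument, but with the penalization replaced by the minimal action $D(\varepsilon,x,y)$ of the Lagrangian $L$ rather than a power of the geodesic distance. First I would set, for $\varepsilon>0$,
\[
\Phi^\varepsilon(x,y) := u_0(x) - u_1(y) - D(\varepsilon,x,y),
\]
and extract a maximizer $(x_\varepsilon,y_\varepsilon)$: existence follows because $u_0$ is bounded above with compact superlevels, $u_1$ is bounded below with compact sublevels, and $D(\varepsilon,\cdot,\cdot)\geq 0$ by Lemma~\ref{Decreasing}, so $\Phi^\varepsilon$ is upper semicontinuous with compact superlevels. By Theorem~\ref{t: existence} pick a minimizing curve $\gamma_\varepsilon\in\Gamma_{[0,\varepsilon]}(x_\varepsilon,y_\varepsilon)$ realizing $D(\varepsilon,x_\varepsilon,y_\varepsilon)$; by the Euler--Lagrange regularity (Theorem~\ref{t:EL}) it is $C^1$, and $\tfrac{\partial L}{\partial v}(\gamma_\varepsilon(t),\dot\gamma_\varepsilon(t))$ is $C^1$ in $t$.

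Next, the crucial superdifferential step. Since $y\mapsto u_1(y)+D(\varepsilon,x_\varepsilon,y)$ and $x\mapsto u_0(x)-D(\varepsilon,x,y_\varepsilon)$ are extremized at $y_\varepsilon$, $x_\varepsilon$ respectively, Proposition~\ref{Differentiability Penalization} (applied to $D(\varepsilon,\cdot,y_\varepsilon)$ at $x_\varepsilon$, and to $D(\varepsilon,x_\varepsilon,\cdot)$ at $y_\varepsilon$ after reversing the curve and using that $L$ is reversible) yields
\[
-\tfrac{\partial L}{\partial v}\big(x_\varepsilon,\dot\gamma_\varepsilon(0)\big)\in\partial^+u_0(x_\varepsilon),
\qquad
-\tfrac{\partial L}{\partial v}\big(y_\varepsilon,\dot\gamma_\varepsilon(\varepsilon)\big)\in\partial^-u_1(y_\varepsilon).
\]
(For the second inclusion one notes that $t\mapsto\gamma_\varepsilon(\varepsilon-t)$ is a minimizer of the action joining $y_\varepsilon$ to $x_\varepsilon$ with initial velocity $-\dot\gamma_\varepsilon(\varepsilon)$, and $L(y,-v)=L(y,v)$ gives $\tfrac{\partial L}{\partial v}(y_\varepsilon,-\dot\gamma_\varepsilon(\varepsilon))=-\tfrac{\partial L}{\partial v}(y_\varepsilon,\dot\gamma_\varepsilon(\varepsilon))$.) Feeding these into the sub/supersolution inequalities gives
\[
u_0(x_\varepsilon)-u_1(y_\varepsilon)\leq -H\big(x_\varepsilon,-\tfrac{\partial L}{\partial v}(x_\varepsilon,\dot\gamma_\varepsilon(0))\big)+H\big(y_\varepsilon,-\tfrac{\partial L}{\partial v}(y_\varepsilon,\dot\gamma_\varepsilon(\varepsilon))\big)+F_0(x_\varepsilon)-F_1(y_\varepsilon).
\]
Now comes the point where convexity does the work the Lipschitz estimate did before: by the Legendre identity \eqref{e: legendreequality} and the symmetry \eqref{Symmetry Hamiltonian}, $H\big(z,-\tfrac{\partial L}{\partial v}(z,w)\big)=H(z,\tfrac{\partial L}{\partial v}(z,w))=\hat H(z,w)$, the dual energy, which by Theorem~\ref{t:EL} is \emph{constant} along the speed curve $s\mapsto(\gamma_\varepsilon(s),\dot\gamma_\varepsilon(s))$. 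Hence the two Hamiltonian terms are equal and cancel exactly, leaving
\[
u_0(x_\varepsilon)-u_1(y_\varepsilon)\leq F_0(x_\varepsilon)-F_1(y_\varepsilon).
\]

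Finally I would close the argument as in Theorem~\ref{p: comparisongeneral}: using $\Phi^\varepsilon(x_\varepsilon,y_\varepsilon)\geq\Phi^\varepsilon(x,x)=u_0(x)-u_1(x)$ for every $x$ (since $D(\varepsilon,x,x)=0$), combined with the displayed bound, gives $u_0(x)-u_1(x)\leq F_0(x_\varepsilon)-F_1(y_\varepsilon)$ for all $x$. By Lemma~\ref{Convergence Maxim}, along a subsequence $(x_\varepsilon,y_\varepsilon)\to(\bar x,\bar x)$; passing to the limsup and using upper semicontinuity of $F_0$ and lower semicontinuity of $-F_1$ yields $u_0(x)-u_1(x)\leq F_0(\bar x)-F_1(\bar x)\leq\sup_M(F_0-F_1)$, and taking the supremum over $x$ finishes the proof. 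The main obstacle I anticipate is the rigorous justification of the two superdifferential inclusions in the non-compact, possibly non-Tonelli (merely weak, reversible, dissipative) setting — in particular making sure Proposition~\ref{Differentiability Penalization} and the constancy of $\hat H$ along minimizers are available without the extra smoothness a Tonelli Lagrangian would give, and handling the compact-superlevel hypotheses carefully so that the maximizer $(x_\varepsilon,y_\varepsilon)$ genuinely exists and the limiting point lies where Lemma~\ref{Convergence Maxim} predicts.
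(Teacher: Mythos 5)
Your proof is essentially the paper's own: same penalization $D(\varepsilon,\cdot,\cdot)$, same superdifferential inclusions from Proposition~\ref{Differentiability Penalization} (including the curve-reversal argument using $L(y,-v)=L(y,v)$ for the second inclusion), same exact cancellation of the two Hamiltonian terms via the symmetry \eqref{Symmetry Hamiltonian} and the constancy of $\hat H$ along the minimizer from Theorem~\ref{t:EL}, and the same closing step combining $\Phi^\varepsilon(x_\varepsilon,y_\varepsilon)\geq\Phi^\varepsilon(\bar x,\bar x)$ with Lemma~\ref{Convergence Maxim}.

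Your closing concern is well founded and worth keeping. The paper's proof really does invoke Theorem~\ref{t:EL} (constancy of $\hat H$ and $C^1$ regularity of the minimizer), and that theorem is stated for a weak \emph{Tonelli} Lagrangian; likewise Proposition~\ref{Differentiability Penalization} leans on the regularity of minimizers via Fathi's Corollary~B.20. A merely ``weak, reversible, dissipative'' Lagrangian as in the statement does not support these steps, and the hypothesis should be read as weak Tonelli (cf.\ Definition~\ref{d: dissipativeLagrangian}). On a smaller note, the semicontinuity directions for $F_0,F_1$ in the statement appear to be swapped relative to what the limsup step requires (compare Theorem~\ref{p: comparisongeneral}, where $F_0$ is u.s.c.\ and $F_1$ l.s.c.); your phrasing ``lower semicontinuity of $-F_1$'' should read upper semicontinuity of $-F_1$, i.e.\ $F_1$ l.s.c.
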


\begin{proof}[Proof of Theorem \ref{p: comparison}]
Set
\begin{equation}
    \Phi^{\varepsilon} (x,y):=u_0(x)-u_1(y)-D(\varepsilon,x,y).
\end{equation}

Let $(x_\varepsilon,y_{\varepsilon})_{\varepsilon>0} \subset M\times M$ be a sequence of maximum points for $\Phi^{\varepsilon}$. Such a sequence exists thanks to the compactness of the sub/super level sets and the u.s.c. of the function $\Phi^{\varepsilon}$.
By Lemma \ref{Convergence Maxim}, we deduce the existence of a subsequence  of $((x_\varepsilon,y_\varepsilon))_{\varepsilon}$ (that we do not relabel), converging to a maximum point of $(\bar x,\bar x)\in M$ of the difference $u_0-u_1$ over $M$. 
Then, recalling the super-differentiability of $D(\eps,\cdot,\cdot)$, it follows that 
\begin{equation}
    \begin{cases}
        &-\frac{\partial}{\partial v}L(x_{\varepsilon},\dot\gamma_{\varepsilon}(0))\in \partial^{+}u_0(x_\epsilon)\\
        &-\frac{\partial}{\partial v}L(y_{\varepsilon},\dot\gamma_{\varepsilon}(\varepsilon)) \in \partial^{-}u_1(y_\varepsilon).
\end{cases}
\end{equation}

Moreover, we have
\begin{align*}
H(x_{\varepsilon}, -\frac{\partial}{\partial v} L(x_\varepsilon, \dot{\gamma}_\varepsilon(0))) - H(y_\varepsilon, -\frac{\partial}{\partial v} L(y_\varepsilon, \dot{\gamma}_\varepsilon(\varepsilon)))&\underbrace{=}_{\eqref{Symmetry Hamiltonian}}
H(x_{\varepsilon}, \frac{\partial}{\partial v} L(x_\varepsilon, \dot{\gamma}_\varepsilon(0))) - H(y_\varepsilon, \frac{\partial}{\partial v} L(y_\varepsilon, \dot{\gamma}_\varepsilon(\varepsilon)))\\
&= \hat{H}(x_{\varepsilon}, \dot{\gamma}_{\varepsilon}(0))) - \hat{H}(y_{\varepsilon}, \dot{\gamma}_{\varepsilon}(\varepsilon)))\underbrace{=}_{\text{Theorem} \, \ref{t:EL}}0
\end{align*}

Thus, by definition of viscosity sub and super solutions 
\begin{equation*}
    u_0(x_{\varepsilon})-u_1(y_\varepsilon)\leq -H(x_\varepsilon,\frac{\partial}{\partial v}L(x_{\varepsilon},\dot\gamma_{\varepsilon}(0)))+H(y_{\varepsilon},-\frac{\partial}{\partial v}L(y_\varepsilon,\dot\gamma_{\varepsilon}(\varepsilon)))+ F_{0}(x_\varepsilon)- F_{1}(y_\varepsilon)=F_{0}(x_\varepsilon)- F_{1}(y_\varepsilon).
\end{equation*}
Therefore, by $\Phi^{\varepsilon}(x_{\varepsilon},y_{\varepsilon})\geq \Phi(\bar x,\bar x)$, we obtain 
$$
u_0(\bar x)-u_1(\bar x)\leq F_{0}(x_\varepsilon)- F_{1}(y_\varepsilon)-D(\varepsilon,x_{\varepsilon},y_{\varepsilon})\leq F_{0}(x_\varepsilon)- F_{1}(y_\varepsilon) 
$$
Sending $\varepsilon \to 0$, and using the regularity properties of the functions, we get 
$$
\max_{M} u_0-u_1 \leq F_{0}(\bar x)-F_1(\bar x)\leq \max_M F_0 -F_1.$$

\end{proof}
\begin{remark}[Possible interpretation of the proof]\label{r: Lyapunov Property}The standard proof uses the quadratic penalization function 
$-\frac{d^2(x,y)}{2\varepsilon}$
instead of the function \( D(\varepsilon, x, y) \). With our choice of penalization, the doubling variables argument shows that \emph{the function $ \big(u_0-F_0\big) - \big(u_1-F_1 \big)$ serves as  Lyapunov function for the flow $ \varepsilon \mapsto (x_\varepsilon, y_\varepsilon )$}. This is in contrast with the quadratic penalization, where this Lyapunov property holds only asymptotically as \( \varepsilon \to 0 \).

This interpretation is not new in the literature. It closely resembles the Lyapunov property described in the monograph \cite[Thm ~4.3.9]{fathi2008weak}, where the minimizing curve \( \gamma_\varepsilon \) selected in the doubling variables argument plays a role analogous to that of a calibrated curve. 

This argument also echoes the framework of \emph{generalized gradient systems} discussed in \cite[Section~3.2]{mielke2023introductionanalysisgradientssystems}, where dissipative properties play a central role in the analysis of gradient flows.
\end{remark}

\section{Mean Field Setting}\label{s: PrelWass}
In this section, we apply the strategy used in the previous section to cases in which $M$ is replaced by a Wasserstein space, which we shall formally see as a Riemannian manifold itself. For more details on such a formal analogy, we refer the reader to \cite{Gigli}.
For further material and details on  Wasserstein spaces with underling space Riemannian manifold, the reader can refer to the monograph \cite{villani2008optimal}. 
Let $X$ be a Polish space, and denote by $\mathcal{P}(X)$ the space of Borel probability measures on $X$.  
For $Y$ another Polish space and $\mu_0 \in \mathcal{P}(X), \mu_1 \in \mathcal{P}(Y)$, we define the set of \emph{couplings} between them as
\[
\Gamma(\mu_0,\mu_1)
:= \big\{ \gamma \in \mathcal{P}(X\times Y) : (p_i)_{\#}\gamma = \mu_i,\; i=1,2 \big\},
\]
where $p_{1/2} : X\times Y \to X/Y$ denotes the canonical projection onto the $i$-th component.

\smallskip
Fix $p>1$ and let $d : X \times X \to [0,\infty)$ be a distance on $X$.  
For a reference point $x_0 \in X$, we define
\[
\mathcal{P}_p(X)
:= \Big\{ \mu \in \mathcal{P}(X) : 
\mathcal{M}_p(\mu) := \int_X d^p(x,x_0)\, \mathrm{d}\mu(x) < \infty \Big\}.
\]
The space $\mathcal{P}_p(X)$, endowed with the \emph{$p$-Wasserstein distance}
\[
W_p(\mu_0,\mu_1)
:= \min_{\gamma \in \Gamma(\mu_0,\mu_1)}
\Bigg( \int_{X\times X} d^p(x,y)\, \mathrm{d}\gamma(x,y) \Bigg)^{1/p},
\]
is called the \emph{$p$-Wasserstein space} over $X$.

\smallskip
By the triangle inequality, the definition of $\mathcal{P}_p(X)$ does not depend on the choice of the reference point $x_0 \in X$.  
For a detailed account of the main properties of the space $(\mathcal{P}_p(X),W_p)$, we refer to \cite[Ch.~6]{villani2008optimal}.

\smallskip
In what follows, we will consider the cases $X = M$, $TM$, or $T^*M$.\\

\subsection{Couplings and notation}
In order to use the analogy which consists in seeing $\mathcal P_p(M)$ as a manifold, we need to use several notations which may seem arbitrary complex at first sight, but which will be of great use in what follows.\\

We firstly lift the exponential function associated with the underlying Riemannian structure
\begin{align}
    \exp_{\mathcal P} &: \mathcal{P}(TM) \to \mathcal{P}(M)\\
    &\sigma \mapsto \exp_{\#}\sigma. \notag
\end{align}
We note that $\exp_{\mathcal{P}}((\delta_x,\delta_v))=\exp_{x}(v)$.
Analogously, we can define $\exp^L_{\mathcal P}(\varepsilon;\cdot)$ as the lift of $\exp^L(\varepsilon;\cdot)$, whenever the latter is well defined, see \eqref{eq:2.16}. 

The exponential map suggests the introduction of the following 
\begin{align*}
    \Gamma_{TM}(\mu_0,\mu_1)&:=\big\{\sigma \in \mathcal{P}(TM) : (\text{Id},\exp)_{\#}\sigma \in \Gamma(\mu_0,\mu_1) \big\}\\
    \Gamma^{TM,L}(\varepsilon;\mu_0,\mu_1)&:=\big\{\sigma \in \mathcal{P}(TM) : (\text{Id},\exp^L(\varepsilon;\cdot))_{\#}\sigma \in \Gamma(\mu_0,\mu_1) \big\},
\end{align*}

that we call sets of \emph{exponential couplings} associated to the Riemannian and Lagrangian structure, respectively.

As already remarked by Gigli in \cite[p. ~14]{Gigli} , in the case the exponentials fails to be a global diffeomorphism, these sets give more information than the usual coupling: indeed, an exponential coupling not only specifies the transport between the unit of masses, but also an initial direction, distinguishing between the various possible source-target path assignments, which is of course not needed when $M = \R^d$.

We introduce several spaces that we shall encounter
\begin{align*}
    \mathcal{P}_{p}(TM) 
    &:= \Big\{ \sigma \in \mathcal{P}(TM) : \pi_{\#}\sigma \in \mathcal{P}_p(M), \,
    \|\sigma\|_{\pi_{\#}\sigma,p} := \Big(\int_{TM} \|v\|^{p}_x \, \sigma(dx,dv)\Big)^{\frac{1}{p}} < \infty \Big\}, \\
    \mathcal{P}_{p,q}(T^{*}M) 
    &:= \Big\{ \gamma \in \mathcal{P}(T^{*}M) : \pi^*_{\#}\gamma \in \mathcal{P}_p(M), \,
    {\|\gamma\|_{*}}_{\pi^*_{\#}\gamma,q} := \Big(\int_{T^*M} {\|z\|_{*}}^{q}_x \, \gamma(dx,dz)\Big)^{\frac{1}{q}} < \infty \Big\}.
\end{align*}

We also note that $\exp_{\mathcal{P}}(\mathcal{P}_p(TM))=\mathcal{P}_p(M)$. Indeed, 

\begin{align*}
\int_{M}d^p(x_0,x)d(\exp)_{\#}\sigma(x,v)&=\int_{TM}d^p(x_0,\exp_x(v))d\sigma(x,v)\\
&\leq 2^{p-1}\Big(\int_{TM}d^p(x_0,x)d\sigma(x,v)+\int_{TM}d^p(x,\exp_x(v))d\sigma(x,v)\Big)\\
&\leq 2^{p-1}\Big(\Big(\int_{TM}d^p(x_0,x)d\pi_{\#}\sigma(x)+\int_{TM}\|v\|^p_xd\sigma(x,v)\Big)\Big).
\end{align*}

Given 
two measures $\gamma \in \mathcal{P}_{p,q}(T^*M), \sigma\in \mathcal{P}_p(TM)$ we set
\begin{align*}&\Gamma_{q,p}(\gamma,
\sigma)=\Big\{\bm \sigma \in \mathcal{P}(T^*M \times TM) \,\big| \,(\pi_{12})_{\#}\bm\sigma=\gamma, (\pi_{3,4})_{\#}\bm \sigma=\sigma \Big\},\\
& \Gamma_{\mu}(\gamma,
\sigma):=\Big\{\bm \sigma \in \Gamma_{q,p}(\gamma,\sigma) \,\vert\, (\pi^{*},\pi)_{\#}\bm \sigma=(Id,Id)_\#\mu \Big\},
\end{align*}
where $\pi_{12}: T^*M \times TM \to T^*M$, $\pi_{34}: T^*M \times TM \to TM$ denote the natural projections.
Sometimes we will use the notation $\gamma_{\mu}:= \gamma \in \mathcal{P}(T^{*}M)$, whenever $\pi_{\#}\gamma= \mu$ and we will note, in this case, $\gamma_{\mu}\in \mathcal{P}_{\mu}(T^{*}M)$. We define $\sigma_{\mu}$ analogously.

\begin{remark}Fix $\mu \in \mathcal{P}_{p}(M)$ and $\gamma \in \mathcal{P}_{p,q}(T^*M), \sigma\in \mathcal{P}_p(TM)$.
We observe that $\bm \sigma \in \Gamma_{\mu}(\gamma,\sigma)$ can be identified with its disintegration $ d\zeta_x(v,z)d\mu(x)$, where $\zeta_x \in \mathcal{P}(T_xM \times T^{*}_xM) $ is $\mu$ a.e. uniquely determined by 
$$\int_{TM\times T^{*}M}f(x,v,x,z)d\bm\sigma(x,v,x,z)=\int_{M}\Big(\int_{T_xM\times T^*_xM}f(x,v,x,z)d\zeta_x(v,z)\Big)d\mu(x), \quad \forall f\in C_c(TM\times T^*M).$$
\end{remark}

Let $
\bm \sigma \in \Gamma_{\mu}(\gamma_{\mu},\sigma_{\mu})$ and $\zeta_x\mu$ its disintegration. We set
\begin{equation}\label{Actiondual}    (\gamma_\mu,\sigma_\mu)_{\bm\sigma}:=\int_{TM} z(v)\bm\sigma(dx,dz,dx,dv)=\int_{M}\Big(\int_{T_xM \times T^*_xM}z(v)d\zeta_x(z,v)\Big)d\mu(x).
\end{equation}
The previous quantity is finite by the Young Inequality, provided that  $\gamma \in \mathcal{P}_{p,q}(T^{*}M)$ and $\sigma \in \mathcal{P}_p(TM)$: Indeed, 
$$
(\gamma_\mu,\sigma_\mu)_{\bm\sigma}\leq \int_{TM\times T^*M}\Big(\frac{{\|z\|^{*}}^q_x}{q}+ \frac{{\|v\|}^p_x}{p}\Big)d \bm \sigma(x,z,x,v)\leq \frac{{\|\gamma\|_{*}}^q_{\mu,q}}{q}+ \frac{\|\sigma\|^p_{\mu,p}}{p}.
$$
This extends the duality of the underlying spaces as the case $\gamma_{\delta_{x}}=\delta_x \otimes  \delta_{v}$ and $\sigma_{\delta_{x}}=\delta_x \otimes  \delta_{z}$
shows 
\begin{equation*}
    (\gamma_{\delta_{x}},\sigma_{\delta_{x}})_{\bm \sigma}=z(v),
\end{equation*}
being $\bm \sigma \in  \Gamma_{\delta_x}(\delta_v, \delta_{z})=\Big\{(\delta_x \otimes\delta_v) \otimes (\delta_x \otimes \delta_{p})\Big\}$.
\\

\subsection{Growth conditions and Relaxed Hamiltonian}
In view of the structure of \eqref{eq: WHJ}, it is natural to assume growth bounds on the Hamiltonian $H$ in which we shall be interested, namely to ensure that the integral of the Hamiltonian is well defined. We here assume that there exists $p>1$, $c, C>0$, $x_0 \in M$ s.t. for al $(x,z) \in T^*M$,
\begin{align}\label{HamiltonianGrowth}
| H(x,z)|\leq C(1+ d^{p}(x,x_0))+c\frac{{\|z\|_{*}}^{q}_x}{q},
\end{align}
where $q$ is the conjugate exponent to $p$. Up to change of the constant $C>0$ the bound \eqref{Lagrangian growth} does not depend on the choice of $x_0\in M$.
When interested in more geometric problems, we shall make the following growth assumption on the associated Lagrangian. There exists $p>1$, $c, C>0$, $x_0 \in M$ s.t. for all $(x,v) \in TM$.
\begin{align}\label{Lagrangian growth}
\frac{1}{c}\frac{\|v\|^{p}_x}{p}-C(1+ d^{p}(x,x_0))\leq L(x,v)&\leq C(1+ d^{p}(x,x_0))+c\frac{\|v\|^{p}_x}{p}\\
 \label{growthL}   \left\|\frac{\partial L}{\partial v}(x,v)\right\|^{*}_x&\leq C(1+ \|v\|^{p-1}_x).
\end{align}

In the same spirit as Kantorovich's relaxation in optimal transport, the second author, in \cite[p. 17]{bertucci2024stochasticoptimaltransporthamiltonjacobibellman}, introduced a notion of relaxed Hamiltonian. In our setting,
the \emph{relaxed Hamiltonian} associated with $H$ is the function
$\textbf{H}:\mathcal{P}_{p,q}(T^{*}M) \to \R$ defined as
\begin{equation}\label{e: relaxedH}
    \textbf{H}(\gamma):=\int H(x,z)\gamma(dx,dz).
\end{equation}
Analogously, one can also define the notion of \emph{relaxed Lagrangian} as a function
$\textbf{L}:\mathcal{P}_{p}(TM) \to \R$ s.t.
\begin{equation}
    \textbf{L}(\sigma):=\int L(x,v)\sigma(dx,dv).
\end{equation}

We note that the growth conditions on the Lagrangian and the Hamiltonian ensure that $\textbf{H}$ and $\textbf{L}$ are well defined.
We suppose also the following \emph{$p$-superlinearity} on $L$: for every $K\geq 0$, there exists $C(K)\in \R$ s.t.
\begin{equation*}
    \forall (x,v)\in TM, \quad  L(x,v)\geq K \|v\|^{p}_x-C(K). 
\end{equation*}
Under this assumption, the relaxed Lagrangian satisfies:
for every $K\geq 0$, there exists $C(K)\in \R$ s.t.
\begin{equation*}
    \forall \sigma \in \mathcal{P}_p(TM), \quad  \text{L}(\sigma)\geq K \|\sigma\|^{p}_{\mu,p}-C(K). 
\end{equation*}

\subsection{ Fenchel Duality in  $\mathcal{P}(TM)$}
In this section we place ourselves in the framework of Section \ref{sec:hypgeo}. The duality extends also in this framework. Indeed, we have the lift of the Legendre Transform of \eqref{legendre} as
\begin{align}
    \mathcal{L_P}: \mathcal{P}(TM)& \to \mathcal{P}(T^*M)\\
    &\sigma \mapsto  \mathcal{L}_{\#}\sigma.
\end{align}
Note that it is invertible, and $\mathcal{L}_{\mathcal P}^{-1}=(\mathcal{L}^{-1})_{\#}$.

\begin{example}\label{ex:mathJ}
In the case of the Examples \ref{ex: p-norm} and \ref{Lagrangian&Geo} we adopted a different notation for the Legendre transform and we remain consistent with it in the MF setting. Here, $ \text{L}(\sigma)=\frac{\|\sigma_\mu\|^p_{\mu}}{p}.$ We will denote by $\mathcal{J}_p=\mathcal{L}_{\mathcal{P}}$ the lift of the Legendre Transform. We note that $\mathcal{J}_p=(J_p)_{\#}.$ 
    
\end{example}
With this choice, we have the following 
\begin{proposition}\label{p: WFenchel}
Let $L: TM \to \R$ be a weak Lagrangian satisfying the p-growth assumption. Denote by $q>1$ the conjugate exponent to $p$. Then,
\begin{enumerate}
\item \label{p:Duality 1}$\mathcal{L}_{\mathcal{P}}(\mathcal{P}_p(TM))=\mathcal{P}_{p,q}(T^{*}M).$
    \item \label{p:Duality 2}The relaxed Hamiltonian is the Fenchel transform of the relaxed Lagrangian, i.e. $\forall \mu \in \mathcal{P}_{p}(M)$ and $\gamma_{\mu}\in \mathcal{P}_{p,q}(T^*M)$
\begin{align*}
   \textbf{\emph{H}}(\gamma_{\mu})
   &=\sup_{\left\{ \bm \sigma \in \Gamma_{\mu}(\gamma_{\mu}, \sigma_{\mu})\, \vert \,\sigma_{\mu} \in \mathcal{P}_{p}(TM) \right\}} \left[ \int_{TM \times T^{*}M} z(v) \, d\bm{\sigma}(x, z, x, v) - \int_{TM}L(x,v)d\sigma_{\mu}(x,v) \right]\\
   &=\sup_{\left\{ \bm \sigma \in \bm \Gamma_{\mu}(\gamma_{\mu}, \sigma_{\mu})\, \vert \,\sigma_{\mu} \in \mathcal{P}_{p}(TM), \, \bm \sigma= \zeta_x \mu \right\}} \left[ \int_M \Big(\int_{T_xM \times T_x^{*}M}\big( z(v) \,  - L(x,v)\big)d\zeta_{x}(z,v)\Big)d\mu(x)\right]
\end{align*}

\end{enumerate}
\end{proposition}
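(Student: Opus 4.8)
The strategy is to reduce everything to the fiberwise (pointwise on $M$) duality \eqref{e: legendreequality} between $H$ and $L$, and then integrate against $\mu$, using a measurable-selection argument to produce an optimal coupling $\bm\sigma$. For part \eqref{p:Duality 1}, I would argue by double inclusion. If $\sigma\in\mathcal P_p(TM)$ with barycentric measure $\mu=\pi_\#\sigma$, then $\gamma:=\mathcal L_\#\sigma$ satisfies $\pi^*_\#\gamma=\mu\in\mathcal P_p(M)$ since $\pi^*\circ\mathcal L=\pi$; and the $p$-growth bound \eqref{growthL}, namely $\|\partial_vL(x,v)\|_{*x}\le C(1+\|v\|_x^{p-1})$, gives
\[
\int_{T^*M}\|z\|_{*x}^q\,d\gamma=\int_{TM}\Big\|\tfrac{\partial L}{\partial v}(x,v)\Big\|_{*x}^q\,d\sigma\le C'\int_{TM}(1+\|v\|_x^p)\,d\sigma<\infty,
\]
so $\gamma\in\mathcal P_{p,q}(T^*M)$. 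Conversely, $\mathcal L$ is a homeomorphism of $TM$ onto $T^*M$ (Proposition B.9 of \cite{Fathi2010}), so any $\gamma\in\mathcal P_{p,q}(T^*M)$ is $\mathcal L_\#\sigma$ with $\sigma:=(\mathcal L^{-1})_\#\gamma$; one must check $\sigma\in\mathcal P_p(TM)$, which follows from the dual $p$-growth bound implied by \eqref{growthL} together with \eqref{Lagrangian growth} (equivalently, from the superlinearity of $H$, which forces $\|v\|_x\le C''(1+\|z\|_{*x}^{q-1})$ along $z=\partial_vL(x,v)$). This establishes $\mathcal L_{\mathcal P}(\mathcal P_p(TM))=\mathcal P_{p,q}(T^*M)$.

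For part \eqref{p:Duality 2}, fix $\mu\in\mathcal P_p(M)$ and $\gamma_\mu\in\mathcal P_{p,q}(T^*M)$, and disintegrate $\gamma_\mu=\eta_x\,\mu$ with $\eta_x\in\mathcal P(T^*_xM)$. For the $\le$ direction, I would take for each $x$ the Legendre image $v(x,z):=\mathcal L^{-1}(x,z)\in T_xM$ (a Borel map, by continuity of $\mathcal L^{-1}$), define $\zeta_x$ as the push-forward of $\eta_x$ under $z\mapsto(z,v(x,z))$, and set $\sigma_\mu$ as the $TM$-marginal $\zeta_x\mapsto$ second coordinate; then $\bm\sigma=\zeta_x\mu\in\bm\Gamma_\mu(\gamma_\mu,\sigma_\mu)$ and by \eqref{e: legendreequality} one has $z(v(x,z))-L(x,v(x,z))=H(x,z)$ for every $z$, so integrating gives exactly $\textbf H(\gamma_\mu)$; one must verify $\sigma_\mu\in\mathcal P_p(TM)$, which is the content of part \eqref{p:Duality 1} applied to $\gamma_\mu$. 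For the $\ge$ direction, take any admissible $\bm\sigma=\zeta_x\mu$ with $\sigma_\mu\in\mathcal P_p(TM)$; the fiberwise Fenchel inequality \eqref{hypoL3}, $z(v)-L(x,v)\le H(x,z)$, holds $\zeta_x$-a.e., and integrating in $v,z$ against $\zeta_x$ and then in $x$ against $\mu$ yields
\[
\int_M\!\Big(\!\int_{T_xM\times T_x^*M}\!\!\big(z(v)-L(x,v)\big)\,d\zeta_x(z,v)\Big)d\mu(x)\le\int_M\!\Big(\!\int_{T^*_xM}\!\!H(x,z)\,d\eta_x(z)\Big)d\mu(x)=\textbf H(\gamma_\mu),
\]
which is legitimate because the growth bounds \eqref{HamiltonianGrowth}, \eqref{Lagrangian growth} and the finiteness of $(\gamma_\mu,\sigma_\mu)_{\bm\sigma}$ (Young's inequality, as recorded after \eqref{Actiondual}) make all three integrands $\mu$-integrable. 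Taking the supremum over $\bm\sigma$ gives $\ge$, and the two formulas in the statement coincide because any $\bm\sigma\in\Gamma_\mu(\gamma_\mu,\sigma_\mu)$ admits a disintegration $\zeta_x\mu$ (the remark following \eqref{Actiondual}).

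**Main obstacle.** The only genuinely delicate point is the measurable dependence on $x$ of the fiberwise optimizer: one needs $x\mapsto v(x,z)=\mathcal L^{-1}(x,z)$ to be jointly Borel so that $\bm\sigma=\zeta_x\mu$ is a bona fide element of $\mathcal P(T^*M\times TM)$. This is handled by continuity of the Legendre transform $\mathcal L^{-1}:T^*M\to TM$ (Proposition B.9 of \cite{Fathi2010}), so there is in fact no need for an abstract measurable-selection theorem; one simply composes continuous maps and pushes forward. A secondary, purely bookkeeping point is justifying the interchange of $\sup$ and integral and the integrability of the three terms, which is where the $p$-growth hypotheses \eqref{HamiltonianGrowth}–\eqref{Lagrangian growth} are used; none of this requires new ideas beyond Young's inequality and the definitions of $\mathcal P_p(TM)$ and $\mathcal P_{p,q}(T^*M)$.
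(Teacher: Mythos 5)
Your proposal is correct and follows essentially the same route as the paper: part \eqref{p:Duality 1} via the growth bound \eqref{growthL} and the fact that $\mathcal L$ is a homeomorphism leaving the base point fixed, and part \eqref{p:Duality 2} by integrating the fiberwise Fenchel inequality $z(v)-L(x,v)\le H(x,z)$ and realizing equality on the plan concentrated on the graph of the Legendre transform. One small point in your favour: the paper's proof of the reverse inclusion $\mathcal P_{p,q}(T^*M)\subseteq \mathcal L_{\mathcal P}(\mathcal P_p(TM))$ is compressed to ``its invertibility implies the surjectivity,'' which only establishes that $\mathcal L_{\mathcal P}^{-1}\gamma$ is a well-defined measure; you correctly point out that one still has to verify the $p$-moment bound $\int\|v\|_x^p\,d\sigma<\infty$ for $\sigma=(\mathcal L^{-1})_\#\gamma$. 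Your parenthetical ``equivalently $\|v\|_x\le C''(1+\|z\|_{*x}^{q-1})$'' is slightly imprecise under the $p$-growth hypothesis \eqref{Lagrangian growth} (the bound picks up an extra $d(x,x_0)$ term, i.e. one obtains $\|v\|_x^p\le C(\|z\|_{*x}^q+1+d^p(x,x_0))$ from $L+H=z(v)$ together with the two-sided bound on $L$), but since $\pi^*_\#\gamma\in\mathcal P_p(M)$ that extra term is integrable and the conclusion holds; you may want to state the bound in that form rather than the cleaner but unavailable one.
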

\begin{proof}
We proceed in order.

\eqref{p:Duality 1}: By \eqref{growthL}, 
the following 
$$\int_{T^*M}{\|z\|_x}_{*}^{q}d\mathcal{L}(\sigma)(x,z)=\int_{TM}{\left\|\frac{\partial L (x,v)}{\partial v}\right\|_x}_{*}^{q}d\sigma(x,v)\leq \int_{TM} C(1+ \|v\|^{(p-1)q}_x)d\sigma(x,v)=C(1+ \|\sigma\|^p_{\mu,p})$$
holds for every choice of $\sigma \in \mathcal{P}_{p}(TM)$. This proves the assertion, since the Lagrange transform leaves the first marginal invariant and its invertibility implies the surjectivity.

\eqref{p:Duality 2}:
Fix $\mu \in \mathcal{P}_p(M)$, $\gamma_{\mu} \in \mathcal{P}_{p,q}(T^*M), \sigma_{\mu} \in \mathcal{P}_{p}(TM)$ and $\bm \sigma \in \bm \Gamma_{\mu}(\gamma_{\mu},\sigma_{\mu})$. 
Then
\begin{align*}
    \textbf{H}(\gamma_{\mu})&=\int H(x,p)d\gamma=\int_{TM\times T^*M} H(x,p)\bm \sigma(dx,dp,dx,dv) \\
    &\geq \int_{TM\times T^*M}  (p(v) - L(x,v))\bm \sigma(dx,dp,dx,dv),
\end{align*}
and the equality holds iff $\bm\sigma$ is concentrated on the graph $S=\big \{(x,z,x,v) \in T^{*}M \times TM \, \big |  \, \,p(v)=\mathcal{L}(x,v) \big\}$. Such a $\bm \sigma$ is necessarily of the form $\bm\sigma=d\delta_{(\frac{\partial L(x,v)}{\partial v})^{-1}}(v)d\gamma_x(z)d\mu(x).$
In particular, $\bm \sigma$ is an optimizer iff
$$\text{H}(\gamma_{\mu})+ \text{L}(\sigma_\mu)= \int_{TM}\frac{\partial L(x,v)}{\partial v}(v)d\sigma(x,v) \quad \text{and } \quad \mathcal{L}_P(\sigma_\mu)=\gamma_\mu.$$

\end{proof}

\subsection{A geometric distance for the doubling of variables}
Let $L$ be a Tonelli Lagrangian satisfying the growth conditions \eqref{Lagrangian growth} for some $p>1$. In order to introduce the appropriate penalization for the doubling of variables as well as several of its properties, we need to consider three transport-type problems. Let $\mu_0,\mu_1 \in \mathcal{P}_p(M)$, we consider

$$
    AC^p([0,\varepsilon]; M)=\big\{\gamma\in C([0,\varepsilon];M):  \int_0^\varepsilon \|\dot \gamma(s)\|_{\gamma(s)}^p ds < \infty \big\}.
$$

We then define
$$
    \text{Geo}_{L}(\varepsilon;M):=\Bigl\{ \gamma \in AC^p([0,\varepsilon];M): \gamma \text{ minimizer of } \mathcal{A}_{[0,\varepsilon]}  \Bigr\} \underbrace{\subseteq}_{\text{Theorem } \ref{t:EL}} C^1([0,\varepsilon]; M),
$$
the set of minimizing path of the action associated to $L$, namely $\mathcal{A}_{[0,\varepsilon]}(\gamma)=\int_0^\varepsilon L(\gamma(s),\dot\gamma(s))ds$. 
We consider the following variational problem
\begin{equation}  \label{dynamic}
    D^{\text{Dyn}}(\varepsilon,\mu_{0},\mu_1):=\inf_{\eta  \in \Gamma^{\text{Dyn}}(\mu_0,\mu_1)} \Bigl\{\int_{AC^p([0,\varepsilon]; M)}\int_{0}^{\varepsilon}L(\gamma(s),\dot\gamma(s))dsd\eta(\gamma)\Bigr\},
\end{equation}
where $\Gamma^{\text{Dyn}}(\varepsilon;\mu_0,\mu_1)=\Big\{  \eta\in \mathcal{P}(AC^p([0,\varepsilon];M) \, | \, (e_{_i})_{\#}\eta=\mu_{i/\varepsilon}, \, i=0,\varepsilon\Big\}.$ Here, $e_t : C([0,\varepsilon];M)\to M, \,  t \in [0,\varepsilon]$ denotes the evaluation map, i.e. $e_t(\gamma)=\gamma(t)$. This is the dynamic Lagrangian formulation of an optimal transport problem.

We also introduce
\begin{align}\label{def: D}
    D_M(\varepsilon,\mu_0,\mu_1):&=\min_{\pi \in \Gamma(\mu_0,\mu_1)}\Bigl\{\int D(\varepsilon,x,y)\pi(dx,dy)\Bigr\}.\\
    \label{D: TML} D_{TM,L}(\varepsilon,\mu_0,\mu_1):&=\min_{\sigma \in \Gamma_{TM,L}(\varepsilon;\mu_0,\mu_1)}\Bigl\{\int D(\varepsilon,x,\exp^{L}_x(\varepsilon;v))\sigma(dx,dv)\Bigr\}\\
     D_{TM}(\varepsilon,\mu_0,\mu_1):&=\min_{\sigma \in \Gamma_{TM}(\mu_0,\mu_1)}\Bigl\{\int D(\varepsilon,x,\exp_x(v))\sigma(dx,dv)\Bigr\}
\end{align}

The equivalence of the three previous problems is somehow standard in the literature and we state and prove it mainly for the sake of completeness.

\begin{theorem}\label{t: equivalent}
Fix $\mu_0,\mu_1 \in \mathcal{P}_{p}(M)$. 
Then 
\begin{enumerate}
\item\label{i: 0} Given $\pi \in \Gamma(\mu_0,\mu_1)$, there exist $\sigma\in \Gamma_{TM}(\mu_0,\mu_1)$ and $\sigma_{L} \in \Gamma_{TM,L}(\mu_0,\mu_1)$ s.t. 
\begin{align*}
    \int_{TM}D(\varepsilon,x,\exp_{x}(v))d\sigma(x,v)=\int_{TM}D(\varepsilon,x,\exp^L_{x}(v))d\sigma_L(x,v)=\int_{M \times M}D(\varepsilon,x,y)d\pi(x,y).
\end{align*}
In particular, 
\begin{equation}
    D_{TM}(\varepsilon,\mu_0,\mu_1)=D_{TM,L}(\varepsilon,\mu_0,\mu_1)=D_M(\varepsilon,\mu_0,\mu_1).
\end{equation}
    \item \label{i:1} 
    If $\pi $ is an optimal coupling for $D_M(\varepsilon,\mu_0,\mu_1)$, then there exists a measurable map $G : M \times M \to \mathrm{Geo}_{L}(\varepsilon;M)$
    assigning to each pair $(x,y) \in \mathrm{supp}(\pi)$ a minimizing Lagrangian geodesic joining $x$ and $y$.  
    Moreover, the push-forward measure $\eta := G_{\#}\pi \in \Gamma^{\mathrm{Dyn}}(\varepsilon;\mu_0,\mu_1)$
    is a minimizer of the dynamic formulation.  
    Conversely, if $\eta \in \Gamma^{\mathrm{Dyn}}(\varepsilon;\mu_0,\mu_1)$ is a minimizer of \eqref{dynamic}, then $\eta$ is concentrated on $\mathrm{Geo}_{L}(\varepsilon;M)$, and the induced transport plan $(e_0,e_{\varepsilon})_{\#}\eta \in \Gamma(\mu_0,\mu_1)$
    is optimal for the  cost $D(\varepsilon,x,y)$ whenever $\eta$ is optimal.

    \item \label{i:2} 
    If $\sigma \in \Gamma_{TM}(\mu_0,\mu_1)$, then the push forward measure 
    \[
        \eta_{\varepsilon} := (\exp^L)_{\#}\,(\mathrm{d}t \otimes \sigma)
    \]
    defines a dynamic plan in $\Gamma^{\mathrm{Dyn}}(\varepsilon;\mu_0,\mu_1)$.Moreover, $\eta$ is optimal whenever $\sigma$ is.
\end{enumerate}
\end{theorem}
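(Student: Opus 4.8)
The plan is to prove the three assertions of Theorem \ref{t: equivalent} in a cyclic manner, exploiting at each step the completeness of the Euler–Lagrange flow (Corollary \ref{compltness flow}), the existence of action minimizers (Theorem \ref{t: existence}), and a measurable selection argument.

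First I would establish \eqref{i: 0}. Given $\pi\in\Gamma(\mu_0,\mu_1)$, the heart of the matter is a measurable selection of, for each $(x,y)$, a minimizing curve $\gamma_{x\to y}\in\mathrm{Geo}_L(\varepsilon;M)$; this exists by Theorem \ref{t: existence} and the measurability follows from the fact that the graph of the multivalued map $(x,y)\mapsto\{$minimizers$\}$ is closed (lower semicontinuity of the action under uniform convergence, plus the $C^1$ regularity of Theorem \ref{t:EL}), so that a Kuratowski–Ryll-Nardzewski type selection applies. Setting $\sigma_L:=(x,y)\mapsto(x,\dot\gamma_{x\to y}(0))$ pushed forward by $\pi$, one has $\exp^L_x(\varepsilon;\dot\gamma_{x\to y}(0))=y$ by definition of $\exp^L$, hence $\sigma_L\in\Gamma^{TM,L}(\varepsilon;\mu_0,\mu_1)$, and $D(\varepsilon,x,\exp^L_x(\varepsilon;v))=D(\varepsilon,x,y)=\mathcal A_{[0,\varepsilon]}(\gamma_{x\to y})$ pointwise, which gives the identity for $\sigma_L$; the construction of $\sigma$ (Riemannian exponential) is the same, selecting instead the initial velocity of a minimizing geodesic. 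The reverse inequalities $D_M\le D_{TM,L}$ and $D_M\le D_{TM}$ follow because any exponential coupling projects, via $(\mathrm{Id},\exp^L)$ or $(\mathrm{Id},\exp)$, to an admissible $\pi$ with the same cost since $D(\varepsilon,x,y)\le D(\varepsilon,x,\exp^L_x(\varepsilon;v))$ whenever $\exp^L_x(\varepsilon;v)=y$ — in fact equality holds only when $v$ is the minimizing initial velocity, so strictly speaking one uses $D(\varepsilon,x,y)=D(\varepsilon,x,\exp^L_x(\varepsilon;v))$ is false in general and one must instead note $D(\varepsilon,x,\exp^L_x(\varepsilon;v))\ge D(\varepsilon,x,\exp^L_x(\varepsilon;v))$ trivially while the projected plan has cost $\int D(\varepsilon,x,y)\,d\pi\le\int D(\varepsilon,x,\exp^L_x(\varepsilon;v))\,d\sigma_L$; combined with the constructed optimal $\sigma_L,\sigma$ attaining $D_M$, all three coincide.

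Next, for \eqref{i:1}, given an optimal $\pi$ for $D_M$, I reuse the measurable selection $G:(x,y)\mapsto\gamma_{x\to y}$ from the first step, now regarded as a map into $\mathrm{Geo}_L(\varepsilon;M)\subset C([0,\varepsilon];M)$, and set $\eta:=G_\#\pi$. Then $(e_0,e_\varepsilon)_\#\eta=\pi$, so $\eta\in\Gamma^{\mathrm{Dyn}}(\varepsilon;\mu_0,\mu_1)$, and by the change-of-variables formula $\int\!\!\int_0^\varepsilon L(\gamma,\dot\gamma)\,ds\,d\eta=\int\mathcal A_{[0,\varepsilon]}(\gamma_{x\to y})\,d\pi=\int D(\varepsilon,x,y)\,d\pi=D_M=D^{\mathrm{Dyn}}$ (the last equality being the content of \eqref{i: 0} together with the elementary inequality $D^{\mathrm{Dyn}}\ge D_M$, obtained by disintegrating any dynamic plan $\eta$ over its endpoints and bounding the inner action below by $D(\varepsilon,\cdot,\cdot)$), so $\eta$ is a minimizer. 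Conversely, if $\eta$ is any minimizer of \eqref{dynamic}, disintegrating over $(e_0,e_\varepsilon)$ and comparing with $D_M$ forces the conditional action to equal $D(\varepsilon,e_0(\gamma),e_\varepsilon(\gamma))$ for $\eta$-a.e.\ $\gamma$, i.e.\ $\eta$ is concentrated on minimizers, and $(e_0,e_\varepsilon)_\#\eta$ achieves $D_M$, hence is optimal for the cost $D(\varepsilon,\cdot,\cdot)$.

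Finally \eqref{i:2} is essentially a reformulation: given $\sigma\in\Gamma_{TM}(\mu_0,\mu_1)$, the map $v\mapsto$ (Riemannian geodesic with that data) need not be an action minimizer for $L$, so I would instead interpret $\exp^L$ here as the map sending $(x,v)$ to the full curve $t\mapsto\exp^L_x(t;v)$ and observe that $(\mathrm{Id},\exp^L_\cdot(\varepsilon;\cdot))_\#\sigma\in\Gamma(\mu_0,\mu_1)$ by the definition of $\Gamma_{TM}$ once one matches conventions; pushing forward $\mathrm dt\otimes\sigma$ under the curve-valued map produces an element of $\Gamma^{\mathrm{Dyn}}$, and optimality transfers through the identity of \eqref{i: 0} plus part \eqref{i:1}. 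The main obstacle I anticipate is not any single deep estimate but the careful bookkeeping of the several exponential-coupling conventions ($\exp$ versus $\exp^L$, velocity at time $0$ versus the whole trajectory) so that the push-forwards land in exactly the stated admissible classes; the one genuinely nontrivial ingredient is the measurable selection of minimizing (Lagrangian) geodesics, which I would justify by closedness of the minimizer multifunction together with $\sigma$-compactness of $M$ and a standard selection theorem, as in \cite{villani2008optimal,AGS}.
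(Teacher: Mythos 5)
Your overall strategy is the same as the paper's: a measurable-selection argument to pass from classical to exponential couplings in part~(1), the standard Kantorovich/dynamic-formulation equivalence (as in Villani, Thm.~7.21 and Rmk.~7.25) for part~(2), and the curve-valued lift $(x,v)\mapsto(t\mapsto\exp^L_x(t;v))$ pushed forward for part~(3). The only structural difference is that in part~(1) the paper selects, for $(x,y)\in\operatorname{supp}\pi$, an arbitrary preimage of $y$ under $\exp^L_x(\varepsilon;\cdot)$ (surjectivity from completeness of the Euler--Lagrange flow, then Kuratowski--Ryll-Nardzewski), whereas you select a minimizing curve; both are valid and your choice is convenient because it immediately serves in part~(2).

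There is, however, a conceptual slip in the middle of your part~(1). You claim that ``$D(\varepsilon,x,y)=D(\varepsilon,x,\exp^L_x(\varepsilon;v))$ is false in general'' when $\exp^L_x(\varepsilon;v)=y$, and that ``equality holds only when $v$ is the minimizing initial velocity.'' This is not right: under the constraint $\exp^L_x(\varepsilon;v)=y$ the two expressions are literally the same number, since $D(\varepsilon,x,\cdot)$ is being evaluated at the single point $y$, irrespective of whether $v$ is a minimizing initial velocity. You are conflating the Kantorovich cost $D(\varepsilon,x,\exp^L_x(\varepsilon;v))$, which is what appears in the definitions \eqref{def: D}--\eqref{D: TML} and depends only on the endpoints, with the Lagrangian action $\mathcal{A}_{[0,\varepsilon]}\big(t\mapsto\exp^L_x(t;v)\big)$ of the particular Euler--Lagrange curve determined by $v$; for the latter one indeed has $D(\varepsilon,x,y)\le\mathcal{A}_{[0,\varepsilon]}(\cdot)$ with equality precisely for minimizing velocities. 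This distinction is the whole point: because the cost functional in $D_{TM}$, $D_{TM,L}$ factors through the endpoint map, the equality of the three variational problems is an elementary bookkeeping identity, needing no minimality of the selected velocity. Your final conclusion is correct, and your selection argument still works, but the parenthetical reasoning should be removed or repaired.
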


\begin{proof}
We first note that $D(\varepsilon,\cdot,\cdot):M \times M \to \R$ satisfies the usual condition to get existence of an optimal coupling $\pi\in \Gamma(\mu_0,\mu_1)$, see \cite[Ch. ~4]{villani2008optimal}.

\eqref{i: 0}: The construction of both $\sigma$ and $\sigma_L$ is based on a measurable selection argument. We proceed with the existence of $\sigma_L$, the one of $\sigma$ being completely analogous.  

Let 
\begin{align*}
    F_L: &M \times M \rightrightarrows TM\\
    &(x,y) \longmapsto (x,v(x,y)), 
\end{align*}
where $\exp_{x}(v(x,y))=y$, be the multivalued map induced by the preimage of the  Lagrangian exponential map (that is surjective by completeness of the E-L Lagrangian Flow). Therefore, $F_{L}((x,y))\not= \emptyset$. Since $L\in C^2$, $\exp^L$ is continuous. In particular, 
$$\text{Graph}_{F_L}=\Big\{(x,y,x,v)\in M^2 \times TM: y=\exp_x^L(\varepsilon;v)\Big\}$$
is closed, and $F_L$ is Borel measurable multifunction. Therefore, by the Kuratowski-Ryll-Nardzewski Theorem \cite[Thm~6.9.3]{Bogachev2007}, for fixed $\pi \in \mathcal{P}(M \times M)$ there exists a Borel selection $(\text{Id}_M,v^L): \text{supp}(\pi) \to TM$ of $F_L$, s.t.  
$$
y=\exp_x(v^L(x,y)) \quad  (x,y)-\pi \,a.e$$
We conclude that 
$$\sigma^L:=(\text{Id}_M,v^L)_{\#}\pi \in \Gamma_{TM,L}(\mu_0,\mu_1) \quad  \text{ and }\quad  \int_{M\times M}D(\varepsilon,x,y)d\pi(x,y)=\int_{TM}D(\varepsilon,x,\exp^L_{x}v)d\sigma(x,v).$$

\eqref{i:1}: The equality $D(\varepsilon,\mu_0,\mu_1)=D^{\text{Dyn}}(\varepsilon,\mu_0,\mu_1)$ is the content of  \cite[Thm ~ 7.21 \& Rmk ~ 7.25]{villani2008optimal}. 
The same reference gives that whenever $\eta\in \Gamma^{\text{Dyn}}(\varepsilon;\mu_0,\mu_1)$ is a  minimizer, $\eta$ is concentrated on $\text{Geo}_L(\varepsilon;M)$. In particular, it is concentrated on solutions of the E-L equation that are of class $C^1$. We note that $\text{Geo}_{L}(\varepsilon;M)$ is a closed subset of $C^1([0,\varepsilon];M)$.

\eqref{i:2}:
We consider the flow map
\begin{align*}
\exp^{L}:\,&[0,\varepsilon]\times TM  \to \text{Geo}_L(\varepsilon;M)\\
&(t,x,v) \longmapsto \gamma,
\end{align*}
where $\gamma: [0,\varepsilon] \to M$ s.t. $\gamma(0)=x, \dot \gamma(0)=v$, and $\gamma$ solution of E-L equation. This map is well defined since the Lagrangian is assumed to be $C^2$. In particular, $\exp_{\gamma(0)}(t;\dot \gamma(0))=\gamma(t).$ This is a continuous map as consequence of the stability w.r.t. initial condition of the E-L flow, ensured by $L\in C^2$.
Define the plan $\eta=(\exp^L)_{\#}\text{d}t\otimes\sigma$, where $\sigma\in \Gamma_{TM}(\mu_0,\mu_1)$. Then $\eta \in \Gamma_{\text{Dyn}}(\mu_0,\mu_1)$, and 
$\eta$ is optimal whenever it is $\sigma$, by combining points \eqref{i:1} and \eqref{i:2}

\end{proof}

\subsection{Superdifferential calculus on $\mathcal{P}_p(M)$.}
We adopt the following notion of superdifferentability for functions on $\mathcal{P}_{p}(M)$.
\begin{definition}[Superdifferential]
Given an upper semi continuous function $U \colon \mathcal{P}_p(M)\to \R$, we say that $ \gamma_{\mu} \in \mathcal{P}_{p,q}(T^*M)$ is a \emph{superdifferential} of $U$ at the point $\mu$ if 
for any $\sigma \in \mathcal{P}_{p}(TM)$, for any $\bm \sigma \in \Gamma_{\mu}(\gamma_\mu,\sigma_\mu)$, 
\begin{equation}\label{def:superdifferentiability}
    U(\exp_{\mathcal{P}}(\sigma_{\mu}))-U(\mu)\leq (\gamma_{\mu},\sigma_{\mu})_{\bm \sigma}+ o(\Big({\int_{TM} d^p(x,\exp_x(v)) \sigma_{\mu}(dx,dv) }\Big)^{\frac{1}{p}}),
\end{equation}

In this case, we note $\gamma_{\mu} \in \partial^{+}U(\mu)$. 
Given a lower semi-continuous function $U \colon \mathcal{P}_p(M)\to \R$, we say that $\gamma_\mu \in \mathcal{P}_{p,q}(T^{*}M)$ is a \emph{subdifferential} of $U$ at the point $\mu$ if $(\pi,-\text{Id})_{\#}\gamma_{\mu} \in \partial^{+}(-U)(\mu)$. We note $\partial^-U(\mu)$ the set of such elements.
\end{definition}

\begin{remark}
This notion of super-differential is analogous to the one called extended super-differential in \cite{AGS} adapted in our setting.    
\end{remark}

Clearly, this definition coincides with the usual finite-dimensional one when $\gamma_{\delta_{x}}=\delta_x \otimes  \delta_{v}$, and \eqref{def:superdifferentiability} is tested along $\sigma_{\delta_{x}}=\delta_x \otimes  \delta_{p}$.

\begin{lemma}(Differentiability)
 Let $U: \mathcal{P}_p(M) \to \R$ be a continuous function. Let $\gamma^{+} \in \partial^{+} U(\mu)$ and $\gamma^{-} \in \partial^{-} U(\mu)$ for some $\mu \in \mathcal P_p(M)$, then $\gamma^{+}=\gamma^{-}$. In particular,  $\partial^{+}U(\mu) \cap \partial^{-}U(\mu) $ contains at most one element. 
\end{lemma}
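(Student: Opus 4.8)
The plan is to exploit the defining inequality of the super- and sub-differentials together with the duality pairing $(\gamma_\mu,\sigma_\mu)_{\bm\sigma}$, mimicking the finite-dimensional fact that $\partial^+f(x)\cap\partial^-f(x)$ is a singleton when $f$ is differentiable. Suppose $\gamma^+\in\partial^+U(\mu)$ and $\gamma^-\in\partial^-U(\mu)$. Writing out the definitions, for every $\sigma_\mu\in\mathcal P_p(TM)$ and every coupling $\bm\sigma^+\in\Gamma_\mu(\gamma^+,\sigma_\mu)$ one has $U(\exp_{\mathcal P}(\sigma_\mu))-U(\mu)\le(\gamma^+,\sigma_\mu)_{\bm\sigma^+}+o(r(\sigma_\mu))$, where $r(\sigma_\mu):=(\int_{TM}d^p(x,\exp_x(v))\,\sigma_\mu(dx,dv))^{1/p}$; dually, since $(\pi,-\mathrm{Id})_\#\gamma^-\in\partial^+(-U)(\mu)$, for every $\sigma_\mu$ and every coupling $\bm\sigma^-\in\Gamma_\mu((\pi,-\mathrm{Id})_\#\gamma^-,\sigma_\mu)$ one has $-U(\exp_{\mathcal P}(\sigma_\mu))+U(\mu)\le((\pi,-\mathrm{Id})_\#\gamma^-,\sigma_\mu)_{\bm\sigma^-}+o(r(\sigma_\mu))$. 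Adding these two inequalities makes $U$ cancel and yields, after unravelling the sign change in the pairing,
\[
0\le (\gamma^+,\sigma_\mu)_{\bm\sigma^+}-(\gamma^-,\sigma_\mu)_{\bm\sigma^-}+o(r(\sigma_\mu)),
\]
for a suitable matching of the disintegrations of $\bm\sigma^+$ and $\bm\sigma^-$ over $\mu$.

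Next I would exploit the homogeneity of the problem by testing along the rescaled vector field: for $t\in(0,1)$ replace $\sigma_\mu$ by $(x,v)\mapsto(x,tv)$, i.e. the dilation $\sigma_\mu^t:=(m_t)_\#\sigma_\mu$ with $m_t(x,v)=(x,tv)$. The pairing is linear in the velocity variable, so $(\gamma^\pm,\sigma_\mu^t)_{\bm\sigma^{\pm}}=t\,(\gamma^\pm,\sigma_\mu)_{\bm\sigma^{\pm}}$, while $r(\sigma_\mu^t)=o(t)$ as $t\to0$ because $d(x,\exp_x(tv))\le t\|v\|_x+o(t)$ pointwise and one can pass this inside the $L^p$ norm by dominated convergence (using $\sigma_\mu\in\mathcal P_p(TM)$). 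Dividing the displayed inequality by $t$ and letting $t\to0$ gives
\[
0\le (\gamma^+,\sigma_\mu)_{\bm\sigma^+}-(\gamma^-,\sigma_\mu)_{\bm\sigma^-}.
\]
Since $\sigma_\mu$ ranges over \emph{all} of $\mathcal P_p(TM)$ with first marginal $\mu$, and in particular we may also feed in the reflected field $(x,v)\mapsto(x,-v)$, both inequalities $0\le(\gamma^+,\sigma_\mu)_{\bm\sigma^+}-(\gamma^-,\sigma_\mu)_{\bm\sigma^-}$ and its reverse hold, forcing equality $(\gamma^+,\sigma_\mu)_{\bm\sigma^+}=(\gamma^-,\sigma_\mu)_{\bm\sigma^-}$ for every such test object.

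Finally I would upgrade this equality of pairings to the equality $\gamma^+=\gamma^-$ as measures in $\mathcal P_{p,q}(T^*M)$. Both $\gamma^+$ and $\gamma^-$ have the same first marginal $\mu$ (this is part of the definition of being a super/sub-differential), so disintegrating $\gamma^\pm=\gamma^\pm_x\,\mu$ with $\gamma^\pm_x\in\mathcal P(T^*_xM)$, and disintegrating the test plan as $\bm\sigma^\pm=\zeta^\pm_x\,\mu$, the pairing identity reads $\int_M\int z(v)\,d\zeta^+_x(z,v)\,d\mu(x)=\int_M\int z(v)\,d\zeta^-_x(z,v)\,d\mu(x)$ for all admissible couplings. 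By choosing $\sigma_\mu$ of the form $\sigma_\mu=\sigma_x\,\mu$ with $\sigma_x=\delta_{v(x)}$ for an arbitrary bounded measurable vector field $x\mapsto v(x)\in T_xM$, and taking the (then unique) product coupling $\zeta^\pm_x=\gamma^\pm_x\otimes\delta_{v(x)}$, this becomes $\int_M\langle b^+(x)-b^-(x),v(x)\rangle_x\,d\mu(x)=0$, where $b^\pm(x):=\int z\,d\gamma^\pm_x(z)\in T^*_xM$ is the barycenter of $\gamma^\pm_x$; arbitrariness of $v$ forces $b^+=b^-$ $\mu$-a.e. To get the full measure equality $\gamma^+_x=\gamma^-_x$ and not merely equal barycenters, one uses that the pairing inequality must hold for \emph{all} couplings $\bm\sigma^\pm$, including non-product ones over each fiber, which pins down all moments $\int\langle z,v\rangle^k$-type quantities against arbitrary $v$; combined with the growth bound $\gamma^\pm\in\mathcal P_{p,q}(T^*M)$ this determines $\gamma^\pm_x$ uniquely, hence $\gamma^+=\gamma^-$. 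The main obstacle I anticipate is precisely this last step: passing from equality of the scalar pairings against all admissible \emph{couplings} to equality of the fiberwise measures $\gamma^\pm_x$ — one must be careful that the freedom in choosing $\bm\sigma$ (rather than just $\sigma_\mu$) is rich enough, which is where the flexibility of $\Gamma_\mu(\gamma_\mu,\sigma_\mu)$ to be non-product is genuinely used, and where a clean measurable-selection argument is needed to avoid circularity.
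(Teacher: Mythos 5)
Your opening moves are sound and essentially parallel the paper's: adding the super- and (reflected) sub-differential inequalities to cancel $U$, and then rescaling the velocity field by $t\to 0$ to kill the $o$-term is a valid way to obtain, for every $\sigma_\mu$ and every pair of admissible couplings, the equality $(\gamma^+,\sigma_\mu)_{\bm\sigma^+}=(\gamma^-,\sigma_\mu)_{\bm\sigma^-}$. (A cosmetic slip: $r(\sigma_\mu^t)=O(t)$, not $o(t)$; what you actually need is $o(r(\sigma_\mu^t))/t\to 0$, which holds because $r(\sigma_\mu^t)\to 0$.)

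The genuine gap is the last step, and the fix you gesture at does not work. With Dirac velocity fields $\sigma_x=\delta_{v(x)}$ you only prove equality of the barycenters $b^+(x)=b^-(x)$, which does \emph{not} force $\gamma^+_x=\gamma^-_x$ (e.g.\ $\tfrac12(\delta_{z_0}+\delta_{-z_0})$ vs.\ $\delta_0$). Your proposed rescue, that non-product couplings ``pin down all moments $\int\langle z,v\rangle^k$'', is not available: the pairing $(\gamma,\sigma)_{\bm\sigma}=\int z(v)\,d\bm\sigma$ is linear in $z$, so varying the coupling and the velocity only produces linear functionals of the disintegrations, never higher moments. Having both pairings equal for all couplings is a strong constraint, but you have not extracted from it the equality of measures, and it is unclear the route goes through.

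What the paper does instead, and what you are missing, is to \emph{first couple the two candidate differentials to each other} rather than to a free test velocity. Pick any $\zeta_x\in\Gamma(\gamma^+_x,\gamma^-_x)$ and take the test velocity to be the Legendre dual of the fiberwise difference, $v=J_p^{-1}(z^+-z^-)$, so that the single three-variable plan $\zeta_x(dz^+,dz^-)\,\delta_{J_p^{-1}(z^+-z^-)}(dv)$ simultaneously produces admissible couplings $\bm\sigma^\pm$ for both pairings. Then $(z^+-z^-)(v)=\|z^+-z^-\|_{*x}^q$ by the identity $z(J_p^{-1}z)=\|z\|_{*}^q$, so the difference of pairings equals $\int\|z\|_{*x}^q\,d\tilde\gamma_\mu$ with $\tilde\gamma_x:=(\pi^+-\pi^-)_\#\zeta_x$, while the $o$-term is controlled by $\|\sigma_\mu\|_{\mu,p}^p=\int\|z\|_{*x}^q\,d\tilde\gamma_\mu$ as well. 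The resulting inequality forces $\|\tilde\gamma_\mu\|_{*,\mu,q}=0$, i.e.\ $z^+=z^-$ $\zeta_x$-a.e.; a coupling concentrated on the diagonal has equal marginals, giving $\gamma^+_x=\gamma^-_x$ for $\mu$-a.e.\ $x$. This is the concrete measurable-selection step you anticipated needing, and it sidesteps barycenters entirely by exploiting the duality map $J_p$ to make the pairing a norm.
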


\begin{proof}
Since $\gamma^+_\mu \in \partial^{+}U(\mu)$ and $\gamma^{-}_{\mu} \in  \partial^{-}U(\mu)$, for every $\sigma_{\mu}\in \mathcal{P}_{p}(TM)$ we have
\begin{equation*}
    (\gamma^{+}_{\mu},\sigma_{\mu})_{\bm \sigma^{+}}- (\gamma^{-}_{\mu},\sigma_{\mu})_{\bm \sigma^{-}}\leq o\left(\Big({\int_{TM} d^p(x,\exp_x(v)) \sigma_{\mu}(dx,dv) }\Big)^{\frac{1}{p}}\right))= o\left(\Big({\int \|v\|^{p}_x {}\sigma(dx,dv) }\Big)^{\frac{1}{p}}\right),
\end{equation*}
where $\bm \sigma^{\pm}=d\zeta^{\pm}_{\cdot}\mu\in \Gamma_{\mu}(\gamma^{\pm}_{\mu},\sigma).$
Fix $\zeta_x(z^+,z^-,v)=\zeta_{x}(z^{+},z^{-})\sigma_{x,z^{+},z^{-}}(v)$, where $\zeta_{x} \in \Gamma(\gamma^{+}_x,\gamma_x^{-})$. This procedure is well defined $\mu$ almost everywhere and we do not detail the classical fact that we can do it in a measurable way. It then follows 
\begin{align*}(\gamma^{+}_{\mu},\sigma_{\mu})_{\bm \sigma^{+}}- (\gamma^{-}_{\mu},\sigma_{\mu})_{\bm \sigma^{-}}
&=\int_{M}\Big(\int_{T_xM\times T_x^*M}z^+(v)d\zeta^+_x(z^{+},v)-z^{-}(v)d\zeta^{-}_x(z^{-},v)\Big)d\mu(x)\\
&=\int_{M}\Big(\int_{T_xM\times (T_x^*M)^2}(z^+-z^{-})(v)d\zeta_x(z^{+},z^{-},v)\Big)d\mu(x).
\end{align*}
Set $\tilde\gamma_x:= (\pi_x^{+}-\pi_x^-)_{\#}(\zeta_x)$, $\pi^{\pm}: T_xM\times (T_x^*M)^2 \to  T_x^*M$, are the projection on the second and the third factor. Consider $\sigma_{\mu}=\mathcal{J}^{-1}_{p}(\tilde \gamma_\mu)$, where $\tilde \gamma_\mu=\tilde \gamma_x \mu(dx)$ and $\mathcal J$ is defined in Example \ref{ex:mathJ}. It is then enough to prove $\tilde \gamma_\mu=\delta_0(dz)\mu$. Thanks to this choice of $\sigma_\mu$, using \eqref{dualitynorms}
$$\int_{M}\Big(\int_{T_xM\times (T_x^*M)^2}(z^+-z^{-})(v)d\zeta_x(z^{+},z^{-},v)\Big)d\mu(x)=\int_{T^*M}{\|z\|^{q}_{*}}_xd\tilde {\gamma}_{\mu}\leq o\Big((\int_{T^{*}M}{\|z\|^{q}_{*}}_xd\tilde \gamma_{\mu})^{\frac{1}{p}}\Big).$$

In particular,
$$0\leq {\|\tilde{\gamma}_{\mu}\|_*}_{\mu,q}\leq o(1),$$
which implies $\tilde \gamma_\mu=\delta_0(dz)\mu$, thus the result.

\end{proof}

\begin{definition}[Wasserstein Differential]
Fix $\mu \in \mathcal{P}_p(M)$. We denote by $d_\mu U \in \mathcal{P}_{p,q}(T^*M)$ the unique element of the intersection $\partial^{+} U(\mu)\cap \partial^{-} U(\mu)$, whenever the intersection is not empty. We refer to it as the \emph{Wasserstein differential of $U$ at $\mu$}.   
\end{definition}

The next result is essential in what follows. It states the super-differentiability of the penalization function $D_M$ that we introduced above and shall use in the geometric setting below. Unfortunately, it seems to require some geometric assumption, which is here stated as a form of integrability condition on the concavity bound on the distance $D$ (at the level of the manifold). We give examples where such a situation is satisfied after the proof.
\begin{proposition}\label{wsuperdifferentiability} 
Fix $\varepsilon>0$.
Let $L$ be a Tonelli Lagrangian satisfying the growth condition \eqref{Lagrangian growth}. 
Suppose in addition that the cost $D(\varepsilon,\cdot, \cdot): M \times M \to \R$ is $(\lambda,\omega)$-semiconcave  for $(\lambda,\omega)$ such that there exists $C(\lambda, \sigma)$ for $\|\sigma\|_{\pi_\#\sigma}$ such that
\begin{equation}\label{compatibilitysemiconcavity}
\int_{M\times M}\lambda(x,\exp_x(v))\omega(D(\varepsilon,x,\exp_x(v)))d\sigma(x,v)\leq C(\lambda, \sigma)\omega\left(\left(\int d^{p}(x,y)d\pi(x,y)\right)^{\frac{1}{p}}\right)
\end{equation}
holds for any $\pi \in \mathcal{P}_{p}(M\times M)$.

Let $\sigma_0 \in \Gamma_{TM,L}(\varepsilon;\mu_0,
\nu)$ be an optimal plan for $D_{TM}(\varepsilon,\mu_0,\nu)$, then
\begin{equation}
 \gamma(dx,dp):= (\mathrm{Id}\times (-\mathrm{Id}))_{\#}\mathcal{L}_{\mathcal{P}}(\sigma_0) \in \partial^{+}D_M(\varepsilon,\cdot,\nu)(\mu_0),
 \end{equation}
where we use the slight abuse of notation $(\mathrm{Id}\times (-\mathrm{Id}))(x,v) = (x,-v)$, for $(x,v) \in TM$.
\end{proposition}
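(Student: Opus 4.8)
The plan is to verify the defining inequality \eqref{def:superdifferentiability} for $U = D_M(\varepsilon,\cdot,\nu)$ at $\mu_0$ with the candidate $\gamma$. Fix an arbitrary $\sigma_\mu \in \mathcal{P}_p(TM)$ with $\pi_\#\sigma_\mu = \mu_0$ and an arbitrary $\bm\sigma \in \Gamma_{\mu_0}(\gamma_{\mu_0},\sigma_\mu)$; I must bound $D_M(\varepsilon,\exp_{\mathcal P}(\sigma_\mu),\nu) - D_M(\varepsilon,\mu_0,\nu)$ from above by $(\gamma_{\mu_0},\sigma_\mu)_{\bm\sigma}$ plus the required remainder. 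The natural strategy is to build a \emph{competitor} coupling between $\exp_{\mathcal P}(\sigma_\mu)$ and $\nu$ out of the optimal plan $\sigma_0 \in \Gamma_{TM,L}(\varepsilon;\mu_0,\nu)$ and the perturbation $\sigma_\mu$: morally, at $\mu_0$-a.e.\ point $x$ one first moves in direction $v$ (the perturbation, with $\exp_x v$ the new base point) and then follows the Lagrangian geodesic emanating from $x$ with initial velocity $w(x)$ prescribed by $\sigma_0$. This requires gluing $\sigma_0$ and $\bm\sigma$ over their common marginal $\mu_0$ via disintegration, exactly as in Theorem~\ref{t: equivalent}\eqref{i: 0}; the measurable-selection machinery there gives a joint plan describing, over each $x$, the pair $(v, w, z)$ where $z = \tfrac{\partial L}{\partial v}(x, w)$ is the cotangent vector carried by $\mathcal{L}_{\mathcal P}(\sigma_0)$ (so that $-z$ is the corresponding component of $\gamma$).

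The next step is the pointwise estimate. For fixed $x$, let $\gamma_{x\to y}\in\mathrm{Geo}_L(\varepsilon;M)$ be the minimizer from $x$ to $y=\exp^L_x(\varepsilon;w)$ realizing $D(\varepsilon,x,y)$, and let $\tilde\gamma$ be a minimizer from $\exp_x v$ to $y$. Then $D(\varepsilon,\exp_x v, y) \le D(\varepsilon,x,y) + [\,D(\varepsilon,\exp_x v,y) - D(\varepsilon,x,y)\,]$, and the bracket is controlled by Proposition~\ref{Differentiability Penalization}: since $-\tfrac{\partial}{\partial v}L(x,\dot\gamma_{x\to y}(0)) = -z \in \partial^+ D(\varepsilon,\cdot,y)(x)$, the superdifferentiability inequality \eqref{superdifferentiability} gives
\begin{equation*}
D(\varepsilon,\exp_x v, y) - D(\varepsilon,x,y) \le -z(v) + \lambda(x,\exp_x v)\,\omega\big(d(x,\exp_x v)\big),
\end{equation*}
using the $(\lambda,\omega)$-semiconcavity hypothesis to make the error term uniform (this is the role of the semiconcavity assumption: it upgrades the anonymous $o$ in \eqref{superdifferentiability} to the explicit modulus $\lambda\,\omega$, which one can then integrate). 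Integrating against the glued plan over $M$, the leading terms assemble to $-\int z(v)\,d(\cdots) = (\gamma_{\mu_0},\sigma_\mu)_{\bm\sigma}$ by the definition \eqref{Actiondual} of the pairing (the sign flip is exactly why $\gamma$ carries $-\mathrm{Id}$ in the velocity), while the integral of the error terms is bounded using the compatibility condition \eqref{compatibilitysemiconcavity} by $C(\lambda,\sigma)\,\omega\big((\int d^p(x,\exp_x v)\,d\sigma_\mu)^{1/p}\big)$, which is of the required order $o\big((\int d^p(x,\exp_x v)\,\sigma_\mu(dx,dv))^{1/p}\big)$ since $\omega$ is a modulus of continuity. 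Finally, since the glued plan pushes forward to an admissible coupling between $\exp_{\mathcal P}(\sigma_\mu)$ and $\nu$ (its first marginal is $\exp_\#\sigma_\mu$ by construction, its second is $\nu$ because $\sigma_0$ already transported $\mu_0$ to $\nu$), its total cost bounds $D_M(\varepsilon,\exp_{\mathcal P}(\sigma_\mu),\nu)$ from above, and subtracting the optimality identity $\int D(\varepsilon,x,y)\,d(\cdots) = D_M(\varepsilon,\mu_0,\nu)$ yields \eqref{def:superdifferentiability}.

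The main obstacle I expect is the measurable-selection / gluing bookkeeping: one needs simultaneously a Borel selection of Lagrangian geodesics (as in Theorem~\ref{t: equivalent}\eqref{i: 0}), a disintegration-consistent coupling of $\sigma_0$ with the given $\bm\sigma$ over $\mu_0$, and a verification that the resulting object lies in the right class $\Gamma(\exp_{\mathcal P}(\sigma_\mu),\nu)$ with finite cost — the finiteness requiring the $p$-growth bound \eqref{Lagrangian growth} together with $\sigma_\mu\in\mathcal P_p(TM)$ and $\sigma_0$ optimal (hence with finite action). A secondary subtlety is that $\exp_x v$ need not be joined to $y$ by a \emph{unique} geodesic, so one invokes Theorem~\ref{t: existence} to pick any minimizer and notes that only its action value enters the estimate. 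All the analytic content beyond this is the single application of Proposition~\ref{Differentiability Penalization} at the level of $M$, lifted by integration, so the proof is genuinely a "functorial" transfer of the finite-dimensional Proposition to $\mathcal P_p(M)$, with \eqref{compatibilitysemiconcavity} inserted precisely to control the lift of the error term.
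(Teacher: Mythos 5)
Your proposal is correct and follows essentially the same approach as the paper: build a competitor coupling between $\exp_{\mathcal P}(\sigma_\mu)$ and $\nu$ by gluing $\sigma_0$ and the perturbation over the common marginal $\mu_0$, apply Proposition \ref{Differentiability Penalization} pointwise (using the optimality of $\sigma_0$ to know that $-\frac{\partial}{\partial v}L(x_0,v_0)$ superdifferentiates $D(\varepsilon,\cdot,\exp^L_{x_0}(\varepsilon;v_0))$ at $x_0$), and integrate, controlling the error term via the compatibility condition \eqref{compatibilitysemiconcavity}. You have also correctly identified the main technical burdens (measurable selection, gluing bookkeeping, and the sign flip producing $(\mathrm{Id}\times(-\mathrm{Id}))$), all of which match the paper's argument.
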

 In particular, with the notation and assumptions of the Proposition, for all Borel maps $f: T^*M \to \R$, we have
 \begin{equation*}
     \int_{T^*M} f(x,p)\gamma(dx,dp)=\int_{TM} f\left(x,-\frac{\partial}{\partial v}L(x,v)\right)\sigma_0(dx,dv).
 \end{equation*}

\begin{proof}
Fix $\sigma_1 \in \mathcal{P}_p(TM)$ with $\pi_{\#}\sigma_1=\mu_0$, $\mu_1=\exp_{\mu_0}(\sigma_1) \in \mathcal{P}_p(M)$, and $\nu \in \mathcal{P}_p(M)$. Consider $\bm \sigma_{0 \to1}$ a three plan s.t. $\pi_{1,2}{\#}\bm \sigma_{0,1}=\sigma_0$, and $\pi_{1,3}{\#}\bm\sigma_{0,1}=\sigma_1\in \Gamma_{TM}(\mu_0,\mu_1)$. We note that, by definition, $\zeta:=(\exp_{\pi_1}(\pi_3), \exp_{\pi_1}(\pi_2))_{\#}{\bm \sigma_{0 \to 1}} \in \Gamma(\exp_{\mu_0}(\sigma_1),\nu)$.
Then
\begin{align*}
D_M(\varepsilon, \exp_{\mu_{0}}(\sigma_1),\nu)&-D_M(\varepsilon,\mu_0,\nu)\leq \int_{M\times M} D(\varepsilon,
x_1,y)\zeta(dx_1,dy)-\int D(\varepsilon,x_0,\exp^{L}_{x_0}(\varepsilon;v_0))\sigma_0(dx_0,dv_0)\\
&= \int \Bigl(D(\varepsilon,\exp_{x_0}(v_1),\exp^{L}_{x_0}(\varepsilon;v_0))- D(\varepsilon,x_0,\exp^{L}_{x_0}(\varepsilon;v_0))\Bigr)\bm\sigma_{0,1}(dx_0,dv_0,dv_1)\\
\underbrace{\leq}_{\text{Proposition } \ref{Differentiability Penalization}} & \int \Big(-\frac{\partial}{\partial v}L(x_0,v_0)  (v_1) + \lambda(x_0,\exp_{x_0}(v_1))\omega(d(x_0,\exp_{x_0}(v_1))) \Bigr)\bm\sigma_{0,1}(dx_0,dv_0,dv_1)\\
&= \int p (v_1)\bm \sigma(dx_0,dp,dv_1)+ C(\lambda,\sigma)\omega\Big((\int d^p(x,\exp_x(v))d\sigma_1(x,v)\big)^{\frac{1}{p}}\Big)
\end{align*}
where $\bm\sigma=(\mathcal{L}\times \text{Id})_{\#}\bm\sigma_{0 \to 1}\in \Gamma_{\mu_0}(\gamma, \sigma_1)$. 
In the second inequality, we used the fact that $\sigma_0$, being optimal, is concentrated on the set of point $(x_0,v_0)\in TM$ s.t. $v_0$ is the initial direction of an optimal curve connecting $x_0$ and $\exp_{x_0}^L(v_0)$, see Theorem \ref{t: equivalent}. 
\end{proof}

\begin{remark}\label{rem:superwp}
We observe that condition \eqref{compatibilitysemiconcavity} is fulfilled in all the situations described in Example \ref{ex: semiconcave}.
\end{remark}

We then state two lemmas that are the infinite dimensional analogous of Lemmas \ref{Decreasing} and \ref{Convergence Maxim}.

\begin{lemma}\label{WDecreasing}
Let $L$ be a dissipative Lagrangian, satisfying the growth assumptions \eqref{Lagrangian growth}. 
The function $D_M(\varepsilon, \cdot,\cdot): \mathcal P_p(M) \times \mathcal P_p(M) \to [0,\infty)$ defined in \eqref{def: D}
is non-negative and $\forall \mu,\nu \in \mathcal{P}_p(M)$
\begin{equation}
    D_M(\varepsilon,\mu,\nu)> D_M(\tau\varepsilon,\mu,\nu) \quad \forall \tau> 1.
\end{equation}
\end{lemma}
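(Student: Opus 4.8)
The plan is to reduce the statement to its finite-dimensional counterpart, Lemma~\ref{Decreasing}, by transporting the pointwise inequality for $D(\varepsilon,\cdot,\cdot)$ through an optimal coupling.

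First, non-negativity: dissipativity of $L$ gives $L\geq 0$, hence $\mathcal{A}_{[0,\varepsilon]}(\gamma)\geq 0$ for every admissible curve, so $D(\varepsilon,x,y)\geq 0$ for all $x,y\in M$; consequently $\int D(\varepsilon,x,y)\,\pi(dx,dy)\geq 0$ for every $\pi\in\Gamma(\mu,\nu)$, and taking the minimum yields $D_M(\varepsilon,\mu,\nu)\geq 0$. I would also record that $D_M(\varepsilon,\mu,\nu)<\infty$, a fact needed for the strict inequality below: evaluating the action along a constant-speed minimizing geodesic and using the upper bound in \eqref{Lagrangian growth} gives $D(\varepsilon,x,y)\leq C_\varepsilon\big(1+d^{p}(x,x_0)+d^{p}(x,y)\big)$, which is integrable against any coupling of $\mu$ and $\nu$ with finite $p$-cost; such a coupling exists because $\mu,\nu\in\mathcal{P}_p(M)$.

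For the monotonicity, fix $\tau>1$ and let $\pi_\varepsilon\in\Gamma(\mu,\nu)$ be an optimal plan for $D_M(\varepsilon,\mu,\nu)$; its existence, together with the lower semicontinuity of $D(\varepsilon,\cdot,\cdot)$ justifying it, is the same fact already used in the proof of Theorem~\ref{t: equivalent}. Since $\pi_\varepsilon$ is also admissible in the minimization defining $D_M(\tau\varepsilon,\mu,\nu)$, one has
\[
D_M(\tau\varepsilon,\mu,\nu)\;\leq\;\int_{M\times M}D(\tau\varepsilon,x,y)\,\pi_\varepsilon(dx,dy).
\]
By Lemma~\ref{Decreasing}, $D(\tau\varepsilon,x,y)\leq D(\varepsilon,x,y)$ at every point, with strict inequality as soon as $x\neq y$ (for $x=y$ both sides vanish, the constant curve being the unique minimizer by dissipativity together with strict fiberwise convexity). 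Assuming $\mu\neq\nu$ --- the case $\mu=\nu$ being trivial since both costs are then zero --- the optimal plan $\pi_\varepsilon$ cannot be concentrated on the diagonal $\Delta=\{(x,x):x\in M\}$, hence $\pi_\varepsilon(\Delta^{c})>0$; since $D(\varepsilon,\cdot,\cdot)$ is $\pi_\varepsilon$-integrable, the inequality of integrals becomes strict and we conclude $D_M(\tau\varepsilon,\mu,\nu)<\int_{M\times M}D(\varepsilon,x,y)\,\pi_\varepsilon(dx,dy)=D_M(\varepsilon,\mu,\nu)$.

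The only genuinely delicate point is obtaining the strict inequality, i.e.\ making sure the transport plan actually charges pairs with $x\neq y$; the rest is just exchanging an infimum and an integral. As a self-contained alternative that avoids invoking Lemma~\ref{Decreasing}, one could run its rescaling argument directly on the dynamic formulation: by Theorem~\ref{t: equivalent} we have $D_M(\varepsilon,\mu,\nu)=D^{\mathrm{Dyn}}(\varepsilon,\mu,\nu)$ with an optimal dynamic plan $\eta$ concentrated on $\mathrm{Geo}_L(\varepsilon;M)$, and pushing $\eta$ forward along the time-rescaling $\gamma(\cdot)\mapsto\gamma(\cdot/\tau)$ yields an admissible competitor for $D^{\mathrm{Dyn}}(\tau\varepsilon,\mu,\nu)$ of strictly smaller cost, by strict convexity of $L$ and $L(\cdot,0)=0$, exactly as in the finite-dimensional proof.
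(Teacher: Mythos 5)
Your main argument is correct and takes a genuinely different route from the paper's. The paper passes to the dynamic formulation via Theorem~\ref{t: equivalent}, takes the optimal dynamic plan $\eta$, and rescales it in time, pushing strict convexity through the double integral $\int\!\int_0^\varepsilon L(\gamma,\dot\gamma)\,ds\,d\eta(\gamma)$. You instead work at the static level: take an optimal transport plan $\pi_\varepsilon$ for $D_M(\varepsilon,\mu,\nu)$, apply the pointwise inequality of Lemma~\ref{Decreasing} underneath $\int \cdot\, d\pi_\varepsilon$, and note that $\pi_\varepsilon$ must charge the off-diagonal when $\mu\neq\nu$. Your route is more elementary --- it bypasses Theorem~\ref{t: equivalent} entirely --- at the modest cost of verifying finiteness of $D_M$ separately (which you do via the geodesic estimate and \eqref{Lagrangian growth}) so that the strict inequality of integrals is legitimate. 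You are also more careful than the paper about the degenerate case $\mu=\nu$, where the strict inequality must fail since both sides vanish; the same caveat applies to Lemma~\ref{Decreasing} at $x=y$, and the paper's own proof of Lemma~\ref{WDecreasing} silently needs $\mu\neq\nu$ for the strict step inside its double integral (strict convexity only bites on non-constant curves), but does not flag it. Your suggested alternative --- rescaling the optimal dynamic plan in time --- is essentially the paper's proof.
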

\begin{proof}
Fix $\varepsilon, \tau>0$, we denote by 
\begin{align*}
    h_\tau: &AC^p({[0,\varepsilon]; M)}\to AC^p({[0,\tau \varepsilon]; M)}\\
    & \gamma(t) \mapsto \gamma(\frac{t}{\tau}).
\end{align*}
Fix $\gamma \in AC^p({[0,\varepsilon]; M)}$, then  $\frac{d}{dt}h_\frac{1}{\tau}(\gamma)(t)=\frac{1}{\tau}\dot\gamma(\frac{t}{\tau})$ for a.e $t \in [0,\varepsilon]$. 
Let $\eta \in \Gamma^{\text{Dyn}}(\varepsilon; \mu_0,\mu_1)$ be an optimizer for the dynamic formulation, and fix $\tau>1$. Then, we have that $\eta$ is concentrated on $AC^p([0,\varepsilon];M)$ and
\begin{align*}
D_M(\varepsilon,\mu,\nu) &\geq \int_{AC^p([0,\varepsilon];M)}\int_0^{\varepsilon} L(\gamma(t), \dot{\gamma}(t)) \, dtd\eta(\gamma)\\
&= \int_{AC^p([0,\varepsilon];M)}\frac{1}{\tau} \int_0^{\tau \varepsilon} L(h_{\frac{1}{\tau}}(\gamma), \dot\gamma(\frac{t}{\tau})) \, dt  d\eta(\gamma)\\
&\underbrace{>}_{\text{strict convexity} \& L(\cdot,0)=0}  \int_{AC^p([0,\varepsilon];M)}\int_0^{\tau \varepsilon} L(h_{\frac{1}{\gamma}}(\gamma), \frac{1}{\tau}\dot \gamma(\frac{t}{\tau}))dt d\eta(\gamma)\\
&=\int_{AC^p([0,\varepsilon];M)}\int_0^{\tau \varepsilon} L(h_\frac{1}{\tau}(\gamma)(t), \frac{d}{dt}h_{\frac{1}{\tau}}(\gamma)(t)) \, dt d\eta(\gamma)\\
&= \int_{AC^p([0,\tau \varepsilon];M)}\int_0^{\tau \varepsilon} L(\gamma(t), \dot \gamma(t)) \, dt d{(h_{\frac{1}{\tau}}}_{\#}\eta)(\gamma)\\
& \geq D_M(\tau \varepsilon, \mu,\nu).
\end{align*}
\end{proof}

\begin{lemma}\label{WConvergence Maxim}

\begin{enumerate}
\item The (decreasing) sequence of continuous functions $D_M(t, \cdot,\cdot): \mathcal{P}_p(M)\times \mathcal{P}_p(M) \to \R$ is Gamma converging w.r.t to $W_p$ topology to the convex indicator function over the diagonal $\Delta \subset \mathcal{P}_p(M) \times \mathcal{P}_p(M)$ as $t \downarrow 0$.

\item Given $F: \mathcal{P}_p(M) \times \mathcal{P}_p(M) \to \R$  u.s.c. with compact superlevels

$$M_t:=\max_{\mu,\nu \in \mathcal{P}_p(M)\times \mathcal{P}_p(M)} \Big\{F(\mu,\nu)-D_M(t,\mu,\nu)\Big\} \  \quad\uparrow_{ t \downarrow 0} \quad M_0=\max_{\mu\in \mathcal{P}_p(M)} F(\mu,\mu),$$

and there exists a subsequence of $(\mu_\varepsilon,\nu_\varepsilon)\in \mathrm{argmax}_{(\mu,\nu)\in M}\big\{F(\mu,\nu)-D_M(\varepsilon,\mu,\nu)\big\}$ that converges in the $W_p$ topology to a point of maximum for $M_0$.

\end{enumerate}
\end{lemma}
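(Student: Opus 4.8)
The plan is to transcribe, essentially verbatim, the proof of the finite-dimensional Lemma~\ref{Convergence Maxim}, replacing the geodesic distance $d$ by the Wasserstein distance $W_p$, curves by transport plans, and Lemma~\ref{Decreasing} by Lemma~\ref{WDecreasing}. The first step is the pointwise convergence $D_M(t,\cdot,\cdot)\to 1_\Delta$ on $\mathcal{P}_p(M)\times\mathcal{P}_p(M)$. On the diagonal it is immediate: for $\mu=\nu$ the coupling $(\mathrm{Id},\mathrm{Id})_{\#}\mu$ together with the constant curve at each point --- admissible because $L$ is dissipative, so $L(x,0)=0$ --- gives $D_M(t,\mu,\mu)=0$ for every $t>0$. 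Off the diagonal I would use the $p$-superlinearity of $L$ assumed in Section~\ref{s: PrelWass}: from $L(x,v)\ge K\|v\|_x^p-C(K)$, Jensen's inequality for $r\mapsto r^p$ together with $\int_0^t\|\dot\gamma(s)\|_{\gamma(s)}\,ds\ge d(x,y)$ yields $D(t,x,y)\ge K\,t^{1-p}d^p(x,y)-C(K)\,t$; integrating this against an arbitrary $\pi\in\Gamma(\mu,\nu)$ and using $\int d^p\,d\pi\ge W_p^p(\mu,\nu)$ gives
\[
D_M(t,\mu,\nu)\;\ge\;K\,t^{1-p}\,W_p^p(\mu,\nu)-C(K)\,t,
\]
which tends to $+\infty$ as $t\downarrow0$ since $p>1$ and $W_p(\mu,\nu)>0$.

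For the $\Gamma$-convergence I would argue exactly as in Lemma~\ref{Convergence Maxim}. By Lemma~\ref{WDecreasing} the family $\bigl(D_M(t,\cdot,\cdot)\bigr)_{t>0}$ is monotone in $t$ and consists of continuous, hence lower semicontinuous, functions, so by \cite[Rmk~2.12]{bra06} its $\Gamma$-limit with respect to $W_p$ as $t\downarrow0$ is the lower semicontinuous envelope of its pointwise limit; since $(\mu,\nu)\mapsto W_p(\mu,\nu)$ is continuous, $\Delta=W_p^{-1}(0)$ is closed and $1_\Delta$ is already lower semicontinuous, so the $\Gamma$-limit equals $1_\Delta$. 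This is part (1). For part (2) I would put $F_t:=-F+D_M(t,\cdot,\cdot)$ and $F_0:=-F+1_\Delta$; as $F$ is u.s.c.\ the function $-F$ is l.s.c., so $(F_t)$ is again a monotone family of l.s.c.\ functions whose $\Gamma$-limit is the l.s.c.\ envelope of $-F+1_\Delta$, namely $-F+1_\Delta$ itself (using once more that $\Delta$ is closed and $-F|_\Delta$ is l.s.c.). The compactness of the superlevels of $F$ makes $-F$ coercive, and $F_t\ge -F$ then gives equi-coercivity of $(F_t)$. The Fundamental Theorem of $\Gamma$-convergence \cite[Thm~2.10]{bra06} then yields $\min F_t\to\min F_0$, i.e.\ $M_t\to M_0=\max_{\mu}F(\mu,\mu)$ --- the maximum being attained since $\mu\mapsto F(\mu,\mu)$ is u.s.c.\ with compact superlevels --- and that every sequence $(\mu_t,\nu_t)\in\operatorname{argmax}\{F-D_M(t,\cdot,\cdot)\}$ is precompact in $W_p$ with each subsequential limit of the form $(\bar\mu,\bar\mu)$, $\bar\mu$ a maximizer of $F$ on the diagonal. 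The monotone character of the convergence $M_t\to M_0$ is read off from the monotonicity of $t\mapsto D_M(t,\cdot,\cdot)$ in Lemma~\ref{WDecreasing}.

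Most of the ingredients are routine and already available: existence of optimal plans and the measurable selections are supplied by Theorem~\ref{t: equivalent} and the standard theory (legitimate since $D(\varepsilon,\cdot,\cdot)$ is continuous with controlled growth), and the continuity of $D_M(t,\cdot,\cdot)$ on $(\mathcal{P}_p(M)\times\mathcal{P}_p(M),W_p)$ is part of the statement. The only genuinely non-automatic point --- and the one I expect to require some care --- is the off-diagonal lower bound of Step~1: one must use that the superlinearity constant $C(K)$ is \emph{uniform} in $(x,v)\in TM$ (precisely why the uniform, rather than merely local, superlinearity of \cite{Fathi2010} is imposed in this setting), and check that the Jensen estimate applies to the $AC^p$ competitors entering the dynamic formulation $D^{\mathrm{Dyn}}$. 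Once that bound is secured, everything else is a faithful transcription of the proof of Lemma~\ref{Convergence Maxim}.
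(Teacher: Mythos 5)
Your proposal is correct and follows the same overall architecture as the paper's argument: establish a pointwise lower bound that forces $D_M(t,\mu,\nu)\to+\infty$ off the diagonal, combine the monotonicity from Lemma~\ref{WDecreasing} with \cite[Rmk~2.12]{bra06} to identify the $\Gamma$-limit as the l.s.c.\ envelope $1_\Delta$, and then invoke the Fundamental Theorem of $\Gamma$-convergence for part~(2), with equi-coercivity supplied by the compact superlevels of $F$.

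The one place where you genuinely diverge is in the derivation of the off-diagonal lower bound, and arguably to your advantage. The paper starts from the growth bound \eqref{Lagrangian growth}, which produces an extra term $-Cd^p(\gamma(s),x_0)$ inside the time integral; this in turn makes the infimum over path measures $\eta$ somewhat delicate (the term is not controlled by the endpoints alone), and the resulting bound picks up a $t\,C(1+\max(\mathcal M_p(\mu),\mathcal M_p(\nu)))$ tail. You instead use the $p$-superlinearity hypothesis $L(x,v)\ge K\|v\|_x^p-C(K)$, which is also stated as a standing assumption in Section~\ref{s: PrelWass} and has a \emph{uniform} constant $C(K)$. Combined with Jensen on $r\mapsto r^p$ and $\int_0^t\|\dot\gamma\|\ge d(\gamma(0),\gamma(t))$, this yields $D(t,x,y)\ge K\,t^{1-p}d^p(x,y)-C(K)t$ for every competing path, and integrating against any coupling gives $D_M(t,\mu,\nu)\ge K\,t^{1-p}W_p^p(\mu,\nu)-C(K)t$ directly, with no moment-dependent remainder and no need to argue about which $\eta$ minimizes the relaxed functional. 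It is a cleaner route to the same pointwise limit; from there the $\Gamma$-convergence and compactness arguments you give coincide with the paper's.
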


\begin{proof}
Fix $\mu,\nu \in \mathcal{P}_p(M),$ and call $D_{t,M}:= D_M(t,\cdot,\cdot)$. Then
\begin{align*}
D_{t,M}(\mu,\nu)
&= \min_{\eta \in \Gamma^{\mathrm{Dyn}}(t;\mu,\nu)}
   \int_{AC^p([0,t];M)} \left( \int_0^{t} L(\gamma(s),\dot\gamma(s)) \, ds \right) \, d\eta(\gamma) \\
&\;\;\underbrace{\geq}_{\eqref{Lagrangian growth}}
   \;\frac{1}{cp} \inf_{\eta \in \mathcal{P}(C([0,t];M))}
   \int_{C([0,t];M)} \left( \int_0^t \|\dot\gamma(s)\|_{\gamma(s)}^p - Cd^p(\gamma(s),x_0)\, ds \right) d\eta(\gamma)
   \;-\; t\,C,
\end{align*}
where we used the notation of \eqref{Lagrangian growth}.

Now, choose $\eta\in \mathcal{P}(AC^p([0,t];M))$ concentrated on minimizing geodesics s.t. ${e_0}_{\#}\eta=\mu, {e_t}_{\#}\eta=\nu$. With this choice $\|\dot \gamma(s)\|_{\gamma(s)}=\frac{{d(\gamma(0),\gamma(t))}}{t}, \quad s \in [0,t]$ $\gamma-\eta \, a.e.$, and $\eta$ minimizes the RHS of the previous inequality of we omit the term in $C$.
We then infer
\begin{equation}
    D_{t,M}(\mu,\nu)\geq \frac{1}{cp} \frac{W^p_{p}(\mu,\nu)}{t^{p-1}}- tC(1 + \max (\|\mu\|_p^p , \|\nu\|_p^p)).
\end{equation}

Hence, the sequence $D_t$ is pointwise converging to $1_{\Delta}.$ 
Due to the monotonicity stated in Proposition \ref{Decreasing}, the family of continuous functions $D_{t,M}$
is increasing as $t \downarrow 0$. By \cite[Rmk~2.12]{bra06}, the $\text{Gamma}$-limit (w.r.t. to the $W_p$ topology) of $(D_{t,M})_{t>0}$ as $t \to 0$ coincides with the lower semicontinuous envelope of the pointwise limit, namely $1_\Delta$, which is already lower semicontinuous. This proves the first point. 

The rest of the proof is completely analogous to the one of the finite dimensional Lemma \ref{Convergence Maxim}.
\end{proof}

\subsection{Weak notions of convergence.}\label{weakconvergence}
The space $(\mathcal{P}_p(M), W_p)$ is not locally compact whenever the underlying space $M$ is non-compact (see \cite[Rmk.~7.1.9]{AGS}). This poses a difficulty in the proof of the comparison principle, where one typically needs to extract a \emph{maximizing} sequence. Thus, assuming compactness of level sets in this not locally compact case requires some explanation. A more systematic treatment of such questions is addressed in \cite{bertuccilions}, but we still give some details here for the sake of completeness.

A natural way to address this issue is to introduce a \emph{coercive penalization}, or to employ variants of the so-called \emph{Stegall Lemma} (see \cite{Stegall1978,Stegall1986}), which have already been used in the literature to handle the lack of compactness in optimization problems on Banach spaces (See for instance the proof of \cite[Thm.~1]{CrandallLions1984} for its use in the comparison principle). In the setting $(\mathcal{P}_2(\mathbb{R}^d), W_2)$, this strategy works, thanks to the so-called \emph{Hilbertian} (or \emph{Lagrangian}) lift --- as will be detailed in a forthcoming paper by the first author. 

However, extending this approach to $(\mathcal{P}_p(M), W_p)$ for a general Riemannian manifold $M$,even for $p=2$, is not straightforward, since the Lagrangian formulation no longer provides a linear structure. 

To overcome this difficulty, we shall make certain continuity assumptions on the functions we consider. We postpone to future works the detailed analysis of the interesting problem of dealing with the lack of local compactness in optimization over the Wasserstein space.

We say that a sequence $(\mu_n)_n \subset \mathcal{P}_p(M)$ \emph{weakly converges in $\mathcal P_p(M)$} to $\mu \in \mathcal{P}(M)$ if $\mu_n$ converges narrowly to $\mu$ and the $p$-th moments are uniformly bounded, i.e., $\sup_n \mathcal{M}_p(\mu_n) < \infty.$
By the lower semicontinuity of the moments with respect to narrow convergence, it then follows that
\[
\mathcal{M}_p(\mu) \le \liminf_{n\to\infty} \mathcal{M}_p(\mu_n) < \infty,
\]
so that $\mu \in \mathcal{P}_p(M)$. Of course, in general the first inequality is strict.

Moreover, if $\mu_n$ converges narrowly to $\mu$ and 
$\mathcal{M}_p(\mu_n) \to \mathcal{M}_p(\mu),$
then it follows that $W_p(\mu_n, \mu) \to 0,$ 
i.e., $\mu_n$ converges to $\mu$ in the Wasserstein topology. See \cite{villani2008optimal} for more details on such topologies.

Moreover, by the Banach-Alaoglu Theorem, the space $\mathcal P_{p}(M)$, endowed with this topology, is locally compact:
\begin{lemma}
Let $(\mu_n)\in \mathcal{P}_p(M)$ a sequence of measures s.t. $\sup_n \mathcal{M}_p(\mu_n)<\infty$. Then, there exists a $\mu \in \mathcal{P}_p(M)$
\begin{enumerate}
    \item $\mu_n$ weakly converges to $\mu$ in $\mathcal P_p(M)$.
    \item $\mu_n \to \mu$ in $\mathcal{P}_{p'}(M)$, for all $1\leq p'<p.$
\end{enumerate}
\end{lemma}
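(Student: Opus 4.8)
The plan is to deduce this from Prokhorov's theorem together with a de la Vall\'ee--Poussin uniform integrability argument (the conclusion being understood, as usual, up to extraction of a subsequence, which we do not relabel). First I would establish tightness of $(\mu_n)_n$: since $(M,g)$ is complete, closed bounded subsets of $M$ are compact by Hopf--Rinow (\cite[Theorem~2.8]{doCarmo}), and, writing $C_0 := \sup_n \mathcal{M}_p(\mu_n) < \infty$, the Markov inequality gives $\mu_n\big(M\setminus \overline{B}_R(x_0)\big) \le C_0 R^{-p}$ for all $n$ and all $R>0$; hence $(\mu_n)_n$ is tight and Prokhorov's theorem (the precise compactness alluded to above) yields a subsequence $(\mu_{n_k})_k$ converging narrowly to some $\mu\in\mathcal{P}(M)$. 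Then the lower semicontinuity of $\mathcal{M}_p$ under narrow convergence, recalled just above, gives $\mathcal{M}_p(\mu) \le \liminf_k \mathcal{M}_p(\mu_{n_k}) \le C_0$, so $\mu \in \mathcal{P}_p(M)$; since moreover $\sup_k \mathcal{M}_p(\mu_{n_k}) < \infty$, this subsequence weakly converges to $\mu$ in $\mathcal{P}_p(M)$ in the sense of Section~\ref{weakconvergence}, which is assertion (1).

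For (2), fix $1\le p' < p$. I would observe that $\psi(t):=t^{p/p'}$ is convex, increasing, and superlinear at infinity, while $\int_M \psi\big(d^{p'}(x,x_0)\big)\,d\mu_{n_k}(x) = \mathcal{M}_p(\mu_{n_k}) \le C_0$ for every $k$, so by the de la Vall\'ee--Poussin criterion the functions $x\mapsto d^{p'}(x,x_0)$ are uniformly integrable against $(\mu_{n_k})_k$. Combined with the narrow convergence $\mu_{n_k}\to\mu$, this yields $\mathcal{M}_{p'}(\mu_{n_k}) \to \mathcal{M}_{p'}(\mu)$ (in particular $\mu\in\mathcal{P}_{p'}(M)$, which also follows from $d^{p'}\le 1+d^p$). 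Finally I would invoke the characterization of Wasserstein convergence --- narrow convergence together with convergence of the $p'$-th moments is equivalent to convergence in $W_{p'}$, see \cite[Theorem~6.9]{villani2008optimal} or \cite[Ch.~7]{AGS} --- to conclude that $W_{p'}(\mu_{n_k},\mu)\to 0$, i.e. $\mu_{n_k}\to\mu$ in $\mathcal{P}_{p'}(M)$.

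The only genuinely delicate point is the tightness step, which relies on compactness of closed balls and is therefore exactly where the completeness of $(M,g)$ is used; once tightness is available, the remainder is the routine interplay between narrow convergence, moment bounds, and uniform integrability. It is worth stressing --- consistently with the discussion opening Section~\ref{weakconvergence} --- that one cannot upgrade (2) to convergence in $W_p$ without additional hypotheses: along the extracted subsequence $\mathcal{M}_p$ is only lower semicontinuous, and the inequality $\mathcal{M}_p(\mu)\le\liminf_k\mathcal{M}_p(\mu_{n_k})$ is in general strict.
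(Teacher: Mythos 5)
Your proof is correct, and in fact the paper does not supply one: the lemma is stated immediately after an allusion to the Banach--Alaoglu theorem and left without a \verb|proof| environment. Your argument is the standard way to make that allusion rigorous. In the non-compact Polish setting, Banach--Alaoglu applied to $C_0(M)^*$ only yields a sub-probability limit, so one must anyway establish tightness to recover a limit in $\mathcal P(M)$; your Markov/Hopf--Rinow step does exactly that, and Prokhorov then plays the role the paper assigns to Banach--Alaoglu. The de la Vall\'ee--Poussin step with $\psi(t)=t^{p/p'}$ is the clean way to upgrade narrow convergence to $W_{p'}$-convergence for $p'<p$, and matches the characterization of $W_{p'}$-convergence (narrow convergence plus convergence of $p'$-moments) that the paper itself recalls in the surrounding discussion. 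One cosmetic point: you should state explicitly at the start, as you do parenthetically, that the conclusion is for a subsequence, since the lemma as written in the paper asserts existence of a limit $\mu$ without saying ``up to a subsequence''; your reading is the only sensible one (the full sequence need not converge), but it is worth flagging. Your closing remark that one cannot upgrade to $W_p$-convergence --- since $\mathcal M_p$ is only lower semicontinuous along the extracted subsequence --- is exactly the point the paper makes just before the lemma and is worth keeping.
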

In particular, when we shall assume that a function is weakly lower semi-continuous in $\mathcal P_p(M)$ with bounded sub-level sets, then it reaches its minimum. Furthermore, any function which is lower semi continuous in $\mathcal P_{p'}(M)$ for $p' < p$ is  weakly lower semi-continuous in $\mathcal P_p(M)$. 

When the underlying manifold $M$ is compact, convergence in $W_p$ coincides with weak convergence in $\mathcal P_p(M)$. We also remark that the Wasserstein distance is lower semicontinuous with respect to this notion of convergence (see \cite[Prop.~7.1.3]{AGS}).

\section{First Order Hamilton-Jacobi equations in $\mathcal{P}_p(TM).$}\label{s: wComparison}

This section is devoted to the study of the following equation 

\begin{equation}\label{WHJB}
W(\mu)+\int_{M}H(\mu,x,d_\mu W)d\mu=\mathcal{F}(\mu), \quad  \mu \in  \mathcal{P}_p(M).
\end{equation}

Here, we assume the Hamiltonian $H$ to have the growth described in \eqref{HamiltonianGrowth} for some $q >1.$ 

A viscosity theory approach for the well-posedness for equation \eqref{WHJB} is natural. Again, we concentrate on the comparison principle.

The scope of this section is to introduce a notion of viscosity solutions in the Wasserstein framework. It involves the relaxed Hamiltonian \eqref{e: relaxedH} introduced at the beginning of the previous section, and was adopted in a flat compact case in \cite[Def.~2.8.]{bertucci2024stochasticoptimaltransporthamiltonjacobibellman}

\begin{definition}
  We say that an upper-semicontinuous function $U :  \mathcal{P}_p(M) \rightarrow \R$ is a viscosity \emph{sub-solution} of \eqref{WHJB} whenever for all $\mu \in \mathcal{P}_p(M)$ and $\gamma \in \partial^+ U(\mu)$, we have
\begin{align*}
	U(\mu) + \textbf{H}(\gamma) \leq \mathcal{F}(\mu).
\end{align*}
We say that a lower-semicontinuous function $U : \mathcal{P}_p(M) \rightarrow \R$ is a viscosity \emph{super-solution} of \eqref{WHJB} whenever for all $\mu \in \mathcal{P}_p(M)$ and $\gamma \in \partial^- U(\mu)$, we have
\begin{align*}
	U(\mu) + \textbf{H}(\gamma) \geq \mathcal{F}(\mu).
\end{align*} We say that $U$ is a viscosity \emph{solution} if it is both a sub- and super-solution.
\end{definition}

\subsection{Standard non-convex cases}
We state and prove the analogous of Theorem \ref{p: comparisongeneral} in the Wasserstein setting. We shall work under the following assumption.
\begin{assumption}\label{wassumptionHami}
The following regularity on the Hamiltonian $H: \mathcal{P}_p(M)\times T^*M \to \R$ extends the regularity \eqref{continuityHamiltonian} to the Wasserstein framework: there exists a constant $C>0$ s.t.
\begin{equation}\label{continuityHamiltonian}
   |{H}(\mu,x,J_{p}(x,v)) - {H}(\nu, y,J_{p}(y,w))| \leq C\big(1 + \|v\|^{p-1}_x + \|w\|^{p-1}_y \big)( D_S((x,v), (y,w)) + W_p(\mu,\nu))
\end{equation}
holds $ \forall (x,v),(y,w) \in TM.$

If either the manifold $M$ is not compact or $p\not=2$ we have to prescribe a further condition: there exists a uniform constant $C>0$
 \begin{equation*}
   |{H}(\mu,x,J_{p}(x,v)+J_p(x,w)) - {H}(\mu, x,J_{p}(x,w))| \leq C (1+ \|w\|_x+ \|v\|_{x})\|v\|^{p-1}_x \quad \forall (x,v),(x,w)\in TM.  
\end{equation*}
\end{assumption}

The main result of the section is the following.
\begin{theorem}\label{p: wcomparison general}
Let $H: \mathcal{P}_p(M)\times T^*M \to \R$ be satisfying Assumption \ref{wassumptionHami}. Let $F_{i}: \mathcal P_p(M) \to \R, i=0,1$ be two functions s.t. $\mathcal{F}_0$ is u.s.c., and $\mathcal {F}_1$ is l.s.c. .
Let $U_i: [0,T]\times \mathcal P_p( M )\to \R$, $i=0,1$ s.t.

\begin{itemize}
    \item $U_0$ has bounded super-level sets and is weakly upper semi continuous in $\mathcal P_p(M)$,
    \item $U_0$ is a sub-solution of $$W(\mu)+\int_M H(\mu,x,d_\mu W)d\mu(x)=\mathcal F_{0}(\mu), \quad  \mu \in M.$$
\end{itemize}
\begin{itemize}
    \item $U_1$ has bounded sub-level sets and is weakly lower semi continuous in $\mathcal P_p(M)$,
    \item $U_1$ is a super-solution of $$W(\mu)+\int_M H(\mu,x,d_\mu W)d\mu(x)=\mathcal F_{1}(\mu), \quad  \mu \in M.$$
\end{itemize}

Suppose in addition that $W^p_p(\cdot,\cdot)$ on $\mathcal{P}_p(M)\times \mathcal{P}_p(M)$ is super-differentiable.

Then,
$$\sup_{ \mathcal P_p(M)}U_0-U_1\leq \sup_{ \mathcal P_p(M)} \mathcal F_0-\mathcal F_{1}.$$ 
\end{theorem}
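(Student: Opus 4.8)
The plan is to transpose the proof of Theorem~\ref{p: comparisongeneral} to the Wasserstein setting, taking as Lagrangian $L(x,v)=\|v\|_x^p/p$, so that the penalization $D_M$ of \eqref{def: D} equals $W_p^p/(p\varepsilon^{p-1})$ and the structural results of this section apply. I would first set
\[
\Phi^\varepsilon(\mu,\nu):=U_0(\mu)-U_1(\nu)-D_M(\varepsilon,\mu,\nu)
\]
and argue that it admits a maximiser $(\mu_\varepsilon,\nu_\varepsilon)$: since $U_0$ is weakly upper semicontinuous with bounded super-level sets it is bounded above, and $U_1$ is bounded below for the symmetric reason, so along a maximising sequence the $p$-th moments of $\mu_n$ and $\nu_n$ stay bounded; by the weak local compactness of Section~\ref{weakconvergence} a weak subsequential limit exists, and $\Phi^\varepsilon$ is weakly upper semicontinuous because $W_p$ is weakly lower semicontinuous. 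Then Lemma~\ref{WConvergence Maxim}, applied to $F(\mu,\nu)=U_0(\mu)-U_1(\nu)$ (whose superlevels are bounded for the same reason), gives a subsequence along which $(\mu_\varepsilon,\nu_\varepsilon)\to(\bar\mu,\bar\mu)$ with $\bar\mu$ maximising $U_0-U_1$, while $D_M(\varepsilon,\mu_\varepsilon,\nu_\varepsilon)\to0$ (hence $W_p(\mu_\varepsilon,\nu_\varepsilon)\to0$) and $\Phi^\varepsilon(\mu_\varepsilon,\nu_\varepsilon)\to\sup(U_0-U_1)$.

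Next I would exploit maximality: $\mu\mapsto U_0(\mu)-D_M(\varepsilon,\mu,\nu_\varepsilon)$ is maximised at $\mu_\varepsilon$ and $\nu\mapsto -U_1(\nu)-D_M(\varepsilon,\mu_\varepsilon,\nu)$ at $\nu_\varepsilon$, so directly from the definition of the Wasserstein superdifferential $\partial^+ D_M(\varepsilon,\cdot,\nu_\varepsilon)(\mu_\varepsilon)\subseteq\partial^+U_0(\mu_\varepsilon)$ and $\partial^+ D_M(\varepsilon,\mu_\varepsilon,\cdot)(\nu_\varepsilon)\subseteq\partial^+(-U_1)(\nu_\varepsilon)$. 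Invoking the superdifferentiability hypothesis on $W_p^p$ in the form of Proposition~\ref{wsuperdifferentiability} (and its mirror, valid since $D$ is symmetric in $(x,y)$, so $D_M$ is symmetric in $(\mu,\nu)$), and writing $\pi_\varepsilon$ for an optimal $W_p$-plan and $\gamma_{x,y}\colon[0,\varepsilon]\to M$ for a minimising geodesic from $x$ to $y$ (so $\|\dot\gamma_{x,y}(0)\|_x=\|\dot\gamma_{x,y}(\varepsilon)\|_y=d(x,y)/\varepsilon$), one gets
\[
\gamma^0_\varepsilon:=\big((x,y)\mapsto(x,-J_p(x,\dot\gamma_{x,y}(0)))\big)_\#\pi_\varepsilon\in\partial^+U_0(\mu_\varepsilon),\qquad
\gamma^1_\varepsilon:=\big((x,y)\mapsto(y,-J_p(y,\dot\gamma_{x,y}(\varepsilon)))\big)_\#\pi_\varepsilon\in\partial^-U_1(\nu_\varepsilon),
\]
the second membership using reversibility of $L$ (so $J_p$ is odd, and the reversed geodesic is again minimising) together with the definition of $\partial^-$ via $(\pi,-\mathrm{Id})_\#$; both plans lie in $\mathcal{P}_{p,q}(T^*M)$ for fixed $\varepsilon$. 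Writing the sub/super viscosity inequalities at $\mu_\varepsilon$ and $\nu_\varepsilon$, subtracting, and comparing $\Phi^\varepsilon(\mu_\varepsilon,\nu_\varepsilon)$ with $\Phi^\varepsilon(\mu,\mu)$ (whose penalization vanishes), I would reach
\[
\sup(U_0-U_1)\ \le\ U_0(\mu_\varepsilon)-U_1(\nu_\varepsilon)\ \le\ \textbf{H}(\gamma^1_\varepsilon)-\textbf{H}(\gamma^0_\varepsilon)+\mathcal F_0(\mu_\varepsilon)-\mathcal F_1(\nu_\varepsilon).
\]

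It then remains to show $\limsup_{\varepsilon\to0}\big(\textbf{H}(\gamma^1_\varepsilon)-\textbf{H}(\gamma^0_\varepsilon)\big)\le0$. Using the explicit form of $\gamma^0_\varepsilon,\gamma^1_\varepsilon$ (the ``in particular'' part of Proposition~\ref{wsuperdifferentiability}), this difference equals $\int_{M\times M}\big[H(\nu_\varepsilon,y,-J_p(y,\dot\gamma_{x,y}(\varepsilon)))-H(\mu_\varepsilon,x,-J_p(x,\dot\gamma_{x,y}(0)))\big]\,\pi_\varepsilon(dx,dy)$; writing $-J_p(x,v)=J_p(x,-v)$ and using that $s\mapsto\gamma_{x,y}(\varepsilon-s)$ is a minimising geodesic from $y$ to $x$, the integrand has the form controlled by \eqref{continuityHamiltonian}, hence is bounded by $C\big(1+2(d(x,y)/\varepsilon)^{p-1}\big)\big(D_S(\cdot,\cdot)+W_p(\mu_\varepsilon,\nu_\varepsilon)\big)$, and by \eqref{PTgeodesic}, \eqref{e: isometrydual} the parallel-transport term in $D_S$ vanishes along geodesics, leaving $D_S(\cdot,\cdot)\le d(x,y)$. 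Integrating against $\pi_\varepsilon$, Hölder's inequality bounds the result by a multiple of $W_p(\mu_\varepsilon,\nu_\varepsilon)+\varepsilon^{-(p-1)}W_p^p(\mu_\varepsilon,\nu_\varepsilon)=W_p(\mu_\varepsilon,\nu_\varepsilon)+pD_M(\varepsilon,\mu_\varepsilon,\nu_\varepsilon)\to0$. Passing to the $\limsup$ and using upper semicontinuity of $\mathcal F_0$, lower semicontinuity of $\mathcal F_1$, and $(\mu_\varepsilon,\nu_\varepsilon)\to(\bar\mu,\bar\mu)$ yields $\sup(U_0-U_1)\le\mathcal F_0(\bar\mu)-\mathcal F_1(\bar\mu)\le\sup(\mathcal F_0-\mathcal F_1)$. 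In the non-compact case (or $p\ne2$) I would add to $\Phi^\varepsilon$ a localising term $-\delta\int_M d^p(x,x_0)\,d\mu(x)$, whose superdifferential is explicit (Proposition~\ref{Differentiability Penalization}) and whose extra contribution to the Hamiltonian difference is absorbed by the second inequality of Assumption~\ref{wassumptionHami}, exactly as in the proof of Theorem~\ref{p: comparisongeneral}.

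The main obstacle will be the second step: rigorously deducing, from the bare hypothesis that $W_p^p$ is superdifferentiable, the precise description of $\partial^+D_M$ at \emph{both} marginals — that is, that $\gamma^0_\varepsilon$ and $\gamma^1_\varepsilon$ are indeed the relevant super/sub-differentials. This is exactly where the geometry of $M$ enters, through the compatibility condition \eqref{compatibilitysemiconcavity} of Proposition~\ref{wsuperdifferentiability} (or its replacement by the statement's hypothesis), and it has no finite-dimensional counterpart of comparable delicacy. A secondary, more technical, difficulty is carrying the whole existence/compactness part of the argument in the weak topology of Section~\ref{weakconvergence} rather than in $(\mathcal{P}_p(M),W_p)$, which is not locally compact, and matching that topology with the semicontinuity assumptions on $\mathcal F_0,\mathcal F_1$.
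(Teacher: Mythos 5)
Your proposal is correct and follows essentially the same route as the paper's proof: the penalization $D_M(\varepsilon,\mu,\nu)=W_p^p(\mu,\nu)/(p\varepsilon^{p-1})$, maximizers via weak compactness and Lemma~\ref{WConvergence Maxim}, superdifferentials via Proposition~\ref{wsuperdifferentiability} taken along the same optimal geodesics in both directions, cancellation of the parallel-transport term in $D_S$ along those geodesics via \eqref{PTgeodesic}, and a Hölder estimate reducing the Hamiltonian difference to $C(W_p+W_p^p/\varepsilon^{p-1})$. Your formulation is expressed through a static optimal plan $\pi_\varepsilon$ and pointwise geodesics $\gamma_{x,y}$, while the paper phrases the same construction through a single dynamical plan $\eta_\varepsilon$ and the evaluation maps $(e_0,e_0')$, $(e_\varepsilon,-e_\varepsilon')$ --- these are equivalent by Theorem~\ref{t: equivalent}, and your more explicit writing of $\gamma^0_\varepsilon,\gamma^1_\varepsilon$ with the $\varepsilon$-scaling built in is just a cosmetic variant of the paper's $\varepsilon^{1-p}$ factor.
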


\begin{remark}
We insist upon the assumption of super-differentiability of $W_p^p$: we do not know any simple characterization of this fact but already gave in Remark \ref{rem:superwp} several examples of such situations.
\end{remark}

\begin{proof}
The proof closely follows that of Theorem \ref{p: comparisongeneral}. 
For $\varepsilon>0$, set 
\begin{equation*}
\Phi^{\varepsilon}(\mu,\nu):=U_0(\mu)-U_1(\nu)-\frac{W_p^p(\mu,\nu)}{p\varepsilon^{p-1}}
\end{equation*}
Let $(\mu_\varepsilon,\nu_\varepsilon) \subset \mathcal{P}_p(M)\times \mathcal{P}_p(M)$ be a sequence of maximum points for $\Phi^{\varepsilon,\delta}$. Such a sequence exists thanks to the weak u.s.c. in $\mathcal{P}_p(M)$ of $U_0 - U_1$ the fact that it has bounded super-level sets. Then, as in the proof of Theorem \ref{p: comparisongeneral}, one can show that the penalization $\frac{W^p(\mu_\varepsilon,\nu_\varepsilon)}{p\varepsilon^{p-1}}$ vanishes as $\varepsilon\to 0$.

For any  $\sigma^{+}_{\varepsilon}, \sigma^{-}_{\varepsilon}\in \mathcal{P}_p(TM)$ two optimal plans that send $\mu_{\varepsilon}$ into $\nu_{\varepsilon}$, and viceversa, respectively, by Proposition \ref{wsuperdifferentiability}
\begin{equation}
    \begin{cases}
        &(\pi_1\times -\varepsilon^{1-p}\pi_2)_{\#}\mathcal{J}_{p}(\sigma_{\varepsilon}^{+})\in \partial^{+}U_0(\mu_\varepsilon),\\
        &(\pi_1\times \varepsilon^{1-p}\pi_2)_{\#}\mathcal{J}_p(\sigma_{\varepsilon}^{-}) \in \partial^{-}U_1(\nu_\varepsilon).
    \end{cases}
\end{equation}

In the previous, we used the notation $(\pi_1\times \lambda\pi_2)(x,v) = (x,\lambda v)$ for $(x,v) \in TM, \lambda \in \R$. We want to show that we can choose the two couplings so that they concentrate on the same geodesics, but in opposite sense in time. This is true because of Theorem \ref{t: equivalent} which states the equivalence with the dynamical formulation. In particular, we consider a probability measure $\eta_\varepsilon$ on path in $C([0,\varepsilon],M)$ which is optimal for the dynamical formulation associated to $W_p^p$. We then simply choose $\sigma^+_\varepsilon := (e_0,e'_0)_\#\eta_\varepsilon$ and $\sigma^-_\varepsilon := (e_1,-e'_1)_\#\eta_\varepsilon$, where $e_t$ is the evaluation map at time $t$ and $e'_t$ is the evaluation of the derivative at time $t$. Moreover, since $\eta_{\varepsilon}$ is concentrated on minimizing geodesic we have, $\forall \varepsilon>0$
 $\|\dot\gamma(0)\|_{\gamma(0)}=\|\dot\gamma_{\varepsilon}(\varepsilon)\|_{\gamma({\varepsilon})}=\frac{d(\gamma(0),\gamma(\varepsilon))}{\varepsilon},\quad  \gamma-\eta_{\varepsilon} \,a.e.$  

The definition of viscosity sub and super solution yields
\begin{align*}
U_0(\mu_{\varepsilon}) - U_1(\nu_{\varepsilon}) 
&\leq \underbrace{-\textbf{H}((\pi_1,-\varepsilon^{1-p}\pi_2)_{\#}\mathcal{J}_{p}(\sigma^{+}_{\varepsilon}))+\textbf{H}((\pi_1\times \varepsilon^{1-p}\pi_2)_{\#}\mathcal{J}_p(\sigma_{\varepsilon}^{-}))}_{\Delta H}
   + \mathcal{F}_{0}(\mu_{\varepsilon}) - \mathcal{F}_{1}(\nu_{\varepsilon}). 
\end{align*}
We estimate the difference between the two Hamiltonians through
\begin{align*}
   \Delta H&= \int_{C([0,\varepsilon])}\Big(H(\mu_{\varepsilon},\gamma(0), J_{p}(\gamma(0), \dot{\gamma}(0))) - H(\nu_{\varepsilon},\gamma(\varepsilon), J_{p}(\gamma(\varepsilon), \dot{\gamma}(\varepsilon)))\Big) d\eta_{\varepsilon}(\gamma) \\
&\underbrace{\leq}_{Ass. \ref{wassumptionHami}} C\int_{C([0,\varepsilon])} \Big(1 +\|\dot \gamma(0)\|^{p-1}_{\gamma(0)}+\|\dot \gamma(\varepsilon)\|^{p-1}_{\gamma({\varepsilon})}\Big)\big(D_S((\gamma(0),\dot \gamma(0)),(\gamma(\varepsilon),\dot \gamma(\varepsilon))\big)+ W_p(\mu_\varepsilon, \nu_{\varepsilon})) d\eta_{\varepsilon}(\gamma)\\
&\underbrace{\leq}_{\eqref{PTgeodesic}, \eqref{e: isometrydual}} C\int_{C([0,\varepsilon])} \left(1 + \|\dot \gamma(0)\|^{p-1}_{\gamma(0)}+\|\dot \gamma({\varepsilon})\|_{\gamma(\varepsilon)}^{p-1}\right) \big(d(\gamma(0),\gamma(\varepsilon))+W_p(\mu_{\varepsilon},\nu_{\varepsilon})\big))d\eta_{\varepsilon}(\gamma)\\
&\leq C \left (W_p(\mu_{\varepsilon},\nu_{\varepsilon})+ 
\frac{W^{p}_p(\mu_{\varepsilon},\nu_{\varepsilon})}{\varepsilon^{p-1}}\right).
\end{align*}
Therefore we infer that 
$$
U(\mu_{\varepsilon})-U(\nu_{\varepsilon})\leq  \mathcal{F}_0(\mu_{\varepsilon}) - \mathcal{F}_{1}(\nu_{\varepsilon})  + C \big (W_p(\mu_{\varepsilon},\nu_{\varepsilon})+ 
\frac{W^{p}_p(\mu_{\varepsilon},\nu_{\varepsilon})}{\varepsilon^{p-1}}\big).
$$
The proof then concludes as in the finite dimensional.
\end{proof}
\begin{remark}
Just as in the finite dimensional case, the assumptions of the boundedness of sub/super level sets of $V/U$ can be addressed by subtracting/adding a penalization of the type 
\begin{equation*}
    \delta \mathcal{M}_{p}(\mu)=\delta W^p_{p}(\mu,\delta_{x_0}),
\end{equation*}
for a $\delta>0$. One can again compensate possible $p$-growth in the non-compact case, localizing the optimization procedure to bounded sets. The proof with this additional assumption follows the same superposition argument as in the compact case with the further localization term to bound in both the penalization and in the equation. It is at this point that we need to use the bound
 \begin{equation*}
   |{H}(\mu,x,J_{p}(x,v)+J_p(x,w)) - {H}(\mu, x,J_{p}(x,w))| \leq C (1+ \|w\|_x+ \|v\|_{x})\|v\|^{p-1}_x \quad \forall (x,v),(x,w)\in TM.  
 \end{equation*}
 that is not necessary if we already know a priori that we can restrict our attention to a bounded set. Let us insist upon the fact that the non-local compactness of $\mathcal P_p(M)$ remains an issue in this case, and that we still need to assume weak continuity of the sub/super solutions.
\end{remark}

\subsection{The case of a more geometric Hamiltonian}
We now present our main result, or rather proof, concerning functions that can be used in the doubling of variables techniques in $\mathcal P_p(M)$, when the Hamiltonian $H$ is geometric. Furthermore we state the following result in the compact case and refer to the previous proof for techniques to handle non-compact case. Note that this compactness assumption shall imply that the function used as a penalization is super-differentiable.

\begin{proposition}\label{p: wcomparison}Fix $p>1$. Suppose $M$ compact. 
Let $L:TM \to \R$ be a $C^2$, reversible and dissipative Lagrangian, and let $H=L^*$ its associated geometric Hamiltonian. Consider two functions $\mathcal F_0,\mathcal F_1:\mathcal P_p(M)\to\R$ such that: $\mathcal F_0$ is  u.s.c. and $\mathcal F_1$ is l.s.c..

Let $U_0,U_1:\mathcal P_p(M)\to\R$ be, respectively, a u.s.c bounded-above subsolution and a l.s.c. bounded-below supersolution of 
\begin{equation*}
    W(\mu)+\int_M H\big(x,d_\mu W\big)\,d\mu(x) \;=\; \mathcal F_i(\mu),
    \qquad \mu\in\mathcal P_p(M),  \end{equation*}
where $i =0,1.$

Then,
$$\sup_{\mathcal{P}_p(M)}U_0-U_1\leq \sup_{\mathcal{P}_p(M)} \mathcal F_0- \mathcal F_{1}.$$
\end{proposition}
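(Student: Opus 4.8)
The plan is to reproduce the argument of Proposition \ref{p: comparison}, with the penalization $D_M(\varepsilon,\cdot,\cdot)$ in place of the finite-dimensional $D(\varepsilon,\cdot,\cdot)$, and to use Proposition \ref{wsuperdifferentiability} to manufacture, out of one and the same optimal dynamical plan, matching elements of $\partial^{+}U_0(\mu_\varepsilon)$ and $\partial^{-}U_1(\nu_\varepsilon)$ that are Legendre-dual to the two endpoints of the same family of minimizing Lagrangian geodesics. Compactness of $M$ makes $(\mathcal P_p(M),W_p)$ compact, $D_M(\varepsilon,\cdot,\cdot)$ continuous (Lemma \ref{WConvergence Maxim}), the moment conditions defining $\mathcal{P}_{p,q}(T^*M)$ trivially satisfied, and -- as recorded in Remark \ref{rem:superwp} via Example \ref{ex: semiconcave}(2) -- the compatibility condition \eqref{compatibilitysemiconcavity} needed to invoke Proposition \ref{wsuperdifferentiability} holds; so all the auxiliary tools are available and the non-local-compactness issues of Section \ref{weakconvergence} do not arise.

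First I would set $\Phi^\varepsilon(\mu,\nu):=U_0(\mu)-U_1(\nu)-D_M(\varepsilon,\mu,\nu)$. It is u.s.c. and bounded above on the compact space $\mathcal P_p(M)\times\mathcal P_p(M)$, hence attains its maximum at some $(\mu_\varepsilon,\nu_\varepsilon)$, and by Lemma \ref{WConvergence Maxim} a subsequence satisfies $(\mu_\varepsilon,\nu_\varepsilon)\to(\bar\mu,\bar\mu)$ in $W_p$, with $\bar\mu$ maximizing $U_0-U_1$ and $D_M(\varepsilon,\mu_\varepsilon,\nu_\varepsilon)\to 0$. Next, using Theorem \ref{t: equivalent}, I would fix $\eta_\varepsilon$ optimal for the dynamic formulation of $D_M(\varepsilon,\mu_\varepsilon,\nu_\varepsilon)$, concentrated on minimizing (hence $C^1$, Euler--Lagrange) Lagrangian geodesics $\gamma:[0,\varepsilon]\to M$. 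Writing $e_t,e'_t$ for the evaluations of position and velocity at time $t$, the plan $\sigma^{+}_\varepsilon:=(e_0,e'_0)_{\#}\eta_\varepsilon$ is an optimal plan for $D_{TM}(\varepsilon,\mu_\varepsilon,\nu_\varepsilon)$; since $\mu\mapsto U_0(\mu)-D_M(\varepsilon,\mu,\nu_\varepsilon)$ is maximized at $\mu_\varepsilon$, Proposition \ref{wsuperdifferentiability} (together with $\partial^{+}f+\partial^{+}g\subseteq\partial^{+}(f+g)$) yields
\[
\gamma^{+}_\varepsilon:=(\mathrm{Id}\times(-\mathrm{Id}))_{\#}\mathcal{L}_{\mathcal{P}}(\sigma^{+}_\varepsilon)\in\partial^{+}U_0(\mu_\varepsilon),
\]
i.e. $\gamma^{+}_\varepsilon$ is the law of $\bigl(\gamma(0),-\tfrac{\partial}{\partial v}L(\gamma(0),\dot\gamma(0))\bigr)$ under $\eta_\varepsilon$. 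For the super-solution I would exploit reversibility of $L$: the time-reversed curves $t\mapsto\gamma(\varepsilon-t)$ are again minimizing Lagrangian geodesics, from $\nu_\varepsilon$ to $\mu_\varepsilon$, and $D_M(\varepsilon,\mu_\varepsilon,\cdot)=D_M(\varepsilon,\cdot,\mu_\varepsilon)$. Since $\nu\mapsto U_1(\nu)+D_M(\varepsilon,\mu_\varepsilon,\nu)$ is minimized at $\nu_\varepsilon$, Proposition \ref{wsuperdifferentiability} applied with the two marginals swapped -- combined with $\tfrac{\partial}{\partial v}L(x,-v)=-\tfrac{\partial}{\partial v}L(x,v)$ and the sign flip in the definition of $\partial^{-}$ -- produces $\gamma^{-}_\varepsilon\in\partial^{-}U_1(\nu_\varepsilon)$ equal to the law of $\bigl(\gamma(\varepsilon),-\tfrac{\partial}{\partial v}L(\gamma(\varepsilon),\dot\gamma(\varepsilon))\bigr)$ under $\eta_\varepsilon$.

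Then I would subtract the two viscosity inequalities to get
\[
U_0(\mu_\varepsilon)-U_1(\nu_\varepsilon)\;\le\;\mathcal F_0(\mu_\varepsilon)-\mathcal F_1(\nu_\varepsilon)-\textbf{H}(\gamma^{+}_\varepsilon)+\textbf{H}(\gamma^{-}_\varepsilon),
\]
and observe that the two relaxed Hamiltonians cancel exactly. Indeed, by reversibility \eqref{Symmetry Hamiltonian}, $H(x,-p)=H(x,p)$, so $\textbf{H}(\gamma^{+}_\varepsilon)=\int\hat H(\gamma(0),\dot\gamma(0))\,d\eta_\varepsilon(\gamma)$ and $\textbf{H}(\gamma^{-}_\varepsilon)=\int\hat H(\gamma(\varepsilon),\dot\gamma(\varepsilon))\,d\eta_\varepsilon(\gamma)$, with $\hat H$ the dual energy \eqref{Dual Hamiltonian}; since $\hat H$ is constant along each minimizing Lagrangian geodesic (Theorem \ref{t:EL}), $\textbf{H}(\gamma^{+}_\varepsilon)=\textbf{H}(\gamma^{-}_\varepsilon)$ and hence $U_0(\mu_\varepsilon)-U_1(\nu_\varepsilon)\le\mathcal F_0(\mu_\varepsilon)-\mathcal F_1(\nu_\varepsilon)$. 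Finally, testing $\Phi^\varepsilon(\mu_\varepsilon,\nu_\varepsilon)\ge\Phi^\varepsilon(\mu,\mu)=U_0(\mu)-U_1(\mu)$ (using $D_M(\varepsilon,\mu,\mu)=0$ and $D_M\ge 0$ from Lemma \ref{WDecreasing}) and letting $\varepsilon\to 0$ along the subsequence, the u.s.c. of $\mathcal F_0$ and l.s.c. of $\mathcal F_1$ for $W_p$-convergence (which on compact $M$ is weak convergence in $\mathcal P_p(M)$) give $U_0(\mu)-U_1(\mu)\le\mathcal F_0(\bar\mu)-\mathcal F_1(\bar\mu)\le\sup(\mathcal F_0-\mathcal F_1)$ for every $\mu$, which is the claim.

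The main obstacle is the bookkeeping in the second step: extracting from a single optimal dynamical plan $\eta_\varepsilon$ both a super-differential of $U_0$ at $\mu_\varepsilon$ and a sub-differential of $U_1$ at $\nu_\varepsilon$ that are dual to the two endpoints of the \emph{same} geodesics, while correctly tracking the sign conventions in $\partial^{-}$ and in the reversibility of $L$ (and checking the symmetry $D_M(\varepsilon,\mu,\nu)=D_M(\varepsilon,\nu,\mu)$ this uses). This is precisely what licenses the exact cancellation of the two Hamiltonian terms through conservation of the dual energy -- the geometric mechanism already present in Proposition \ref{p: comparison}, which here replaces any quantitative modulus-of-continuity estimate on $H$.
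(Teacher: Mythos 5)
Your proof is correct and takes essentially the same route as the paper's: maximize $\Phi^\varepsilon$, extract matching super/sub-differentials from a single optimal dynamical plan $\eta_\varepsilon$ via Proposition \ref{wsuperdifferentiability}, and make the relaxed Hamiltonian terms cancel by combining the reversibility of $L$ (hence $H(x,-z)=H(x,z)$ and the symmetry $D_M(\varepsilon,\mu,\nu)=D_M(\varepsilon,\nu,\mu)$) with conservation of the dual energy $\hat H$ along Euler--Lagrange minimizers. Your sign bookkeeping for $\partial^-$ and for $\tfrac{\partial}{\partial v}L(x,-v)=-\tfrac{\partial}{\partial v}L(x,v)$ is in fact a bit more explicit than the paper's.
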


\begin{proof}
Set
$$
    \Phi^{\varepsilon} (\mu,\nu):=U_0(\mu)-U_1(\nu)-D_M(\varepsilon,\mu,\nu).
$$
Let $(\mu_\varepsilon,\nu_{\varepsilon})_{\varepsilon>0} \subset \mathcal{P}_p(M)\times \mathcal{P}_p(M)$ be a sequence of maximum points for $\Phi^{\varepsilon}$. Such a sequence exists thanks to the boundeness, u.s.c. of $\Phi^{\varepsilon}$, and the compactness. 
 By Lemma \ref{WConvergence Maxim}, we deduce the existence of a subsequence  of $((\mu_\varepsilon,\nu_\varepsilon))_{\varepsilon}$ (that we do not relabel), converging to a maximum point of $(\bar \mu,\bar \mu)\in M$ of the difference $U_0-U_1$ over $\mathcal{P}_p(M)$.

Let  $\sigma^{+}_{\varepsilon}, \sigma^{-}_{\varepsilon}\in \mathcal{P}_p(TM)$ be two optimal plans for the cost $D(\varepsilon,\cdot,\cdot)$ that send $\mu_{\varepsilon}$ into $\nu_{\varepsilon}$, and viceversa, respectively.
Then, Proposition \ref{wsuperdifferentiability} yields
\begin{equation}
    \begin{cases}
        &(\pi_1, -\pi_2)_{\#}\mathcal{L}_{P}(\sigma^{+}_{\varepsilon})\in \partial^+U_0(\mu_{\varepsilon})\\
        &\mathcal{L}_{P}(\sigma_{\varepsilon}^{-})\in \partial^{-}U_1(\nu_\varepsilon).
\end{cases}
\end{equation}
Just as in the previous proof, we do not want to choose any such elements but rather consider $\eta_{\varepsilon}$ which is probability measure on $C([0,\varepsilon,M)$ which is optimal for the dynamic formulation of $D(\varepsilon,\mu_\varepsilon,\nu_\varepsilon)$. We then define $\sigma_\varepsilon^+ := (\mathcal L^{-1}\circ (\pi_1,\pi_2)_\#\eta_\varepsilon$ and $\sigma_{\varepsilon}^{-}$ analogously. Note that we have the following conservation of energy $\eta_\varepsilon$ almost everywhere.
\begin{align}\label{comparison: conservationofenergy}
H\left(\gamma(0), \frac{\partial}{\partial v} L(\gamma(0), \dot{\gamma}(0))\right) - H\left(\gamma(\varepsilon), \frac{\partial}{\partial v} L(\gamma(\varepsilon), \dot{\gamma}(\varepsilon))\right)
= \hat{H}(\gamma(0), \dot{\gamma}(0))) - \hat{H}(\gamma(0), \dot{\gamma}(\varepsilon)))\underbrace{=}_{\text{Theorem} \, \ref{t:EL}}0.
\end{align} 
Thus, by definition of viscosity sub and super solutions 
\begin{align*}
    U_0(\mu_{\varepsilon})-U_1(\nu_\varepsilon)&\leq -\textbf{H}((\pi,-\text{Id})_{\#}\mathcal{L}_{P}(\sigma^{+}_{\varepsilon}))+\textbf{H}(\mathcal{L}_{P}(\sigma^{-}_{\varepsilon}))+ \mathcal{F}_{0}(\mu_\varepsilon)- \mathcal F_{1}(\nu_\varepsilon)\\
    &= \int H(x, w^{+})d\sigma^{+}_{\varepsilon}(x,w^{+}) - \int H(y, w^{-})d\sigma^{-}_{\varepsilon}(x,w^{-}) + \mathcal F_{0}(\mu_\varepsilon)- \mathcal F_{1}(\nu_\varepsilon)\\
    &=\int \Big(H(\gamma(0), \frac{\partial}{\partial v} L(\gamma(0), \dot{\gamma}(0))) - H(\gamma(\varepsilon), \frac{\partial}{\partial v} L(\gamma(\varepsilon), \dot{\gamma}(\varepsilon)))\Big) d\eta_{\varepsilon}(\gamma)+ \mathcal F_{0}(\mu_\varepsilon)- \mathcal F_{1}(\nu_\varepsilon)\\
    &\underbrace{=}_{\eqref{comparison: conservationofenergy}}\mathcal F_{0}(\mu_\varepsilon)- \mathcal F_{1}(\nu_\varepsilon).
\end{align*}
Sending $\varepsilon \to 0$ and using the regularity properties of the functions, we get 
$$
\max_{\mathcal P_p(M)} U_0-U_1 \leq \mathcal F_{0}(\bar \mu)-\mathcal F_1(\bar \mu)\leq \max_{ \mathcal P_p(M)} \mathcal F_0 -\mathcal F_1.$$
\end{proof}

\section{Perspectives}
We conclude this paper by recalling the main motivation of our work and insisting upon what we believe are its main takeaways. The current development of a theory of HJB equations on spaces of measures hint that a global comprehension of comparison principles is still missing. We here argue that using the richness of the geometry of the spaces of probability measures can lead to quite easy comparison principles, whereas, arguing only with what would be smooth test functions does not seem to work.

By giving quite exact computation depending on the penalization used in the doubling of variables argument, we justify the use of what is the \emph{natural} distance on the underlying space. This fact will be of the upmost importance in a forthcoming work \cite{bertucciceccherini} on the study of HJB equations on sets of positive measures, in which the results above suggest to work with Wasserstein-Fisher-Rao or Hellinger-Kantorovich distances.

\section{Acknowledgments}
The first author acknowledge a partial support from the Lagrange Mathematics and Computing Research Center and the Chair FDD (Institut Louis Bachelier). The research of the first author is funded by the ERC Starting Grant PaDiESeM. 

The second author gratefully acknowledges the support provided by the ?Ing. Aldo Gini? Foundation and by the LYSM during the development of this work.

 \bibliographystyle{plain} % We choose the "plain" reference style
\bibliography{BiblioNote}

\end{document}